\theoremstyle{definition}
\newtheorem{theorem}{Theorem}[section]
\newtheorem{definition}[theorem]{Definition}
\newtheorem{conjecture}[theorem]{Conjecture}
\newtheorem{lemma}[theorem]{Lemma}
\newtheorem{corollary}[theorem]{Corollary}
\newtheorem{proposition}[theorem]{Proposition}
\newtheorem{remark}[theorem]{Remark}
\newcommand{\Addresses}{{
  \bigskip
  \footnotesize

  S.~Liu, \textsc{Mathematics Institute, University of Warwick, Coventry, CV4 7AL, United Kingdom}\par\nopagebreak
  \textit{E-mail address}: \texttt{Shengxuan.Liu.1@warwick.ac.uk}\par\nopagebreak
  \textit{URL:} \texttt{https://warwick.ac.uk/fac/sci/maths/people/staff/sliu/}

}}
\tikzset{
>=stealth',
  punktchain/.style={
    rectangle,
    rounded corners,
    draw=black, thick,
^{}    
    minimum height=3em,
    text centered,
    on chain},
  line/.style={draw, thick, <-},
  element/.style={
    tape,
    top color=white,
    bottom color=blue!50!black!60!,
    minimum width=8em,
    draw=blue!40!black!90, very thick,
    text width=10em,
    minimum height=3.5em,
    text centered,
    on chain},
  every join/.style={->, thick,shorten >=1pt},
  decoration={brace},
  tuborg/.style={decorate},
  tubnode/.style={midway, right=2pt},
}
\def\cO{\mathcal{O}}
\def\ch{\mathop{\mathrm{ch}}\nolimits}
\def\Coh{\mathop{\mathrm{Coh}}\nolimits}
\def\deg{\mathop{\mathrm{deg}}}
\def\ext{\mathop{\mathrm{ext}}\nolimits}
\def\Hom{\mathop{\mathrm{Hom}}\nolimits}
\def\rk{\mathop{\mathrm{rk}}}
\def\MG13{\ensuremath{{\mathcal M}_{\Gamma_1(3)}}}
\def\tildeMG13{\ensuremath{\widetilde{\mathcal M}_{\Gamma_1(3)}}}
\def\abs#1{\left\lvert#1\right\rvert}
\newtheorem*{rep@theorem}{\rep@title}
\newcommand{\newreptheorem}[2]{%
\newenvironment{rep#1}[1]{%
 \def\rep@title{#2 \ref{##1}}%
 \begin{rep@theorem}}%
 {\end{rep@theorem}}}
\newtheorem{thm-int}{Theorem}
\theoremstyle{definition}
\newtheorem{Def-s}[Thm]{Definition}
\numberwithin{equation}{section}
\def\llra{\hbox to 10mm{\rightarrowfill}}
\def\lllra{\hbox to 15mm{\rightarrowfill}}
\def\llla{\hbox to 10mm{\leftarrowfill}}
\def\lllla{\hbox to 15mm{\leftarrowfill}}
\DeclareMathOperator{\Cone}{Cone}
\def\llra{\hbox to 10mm{\rightarrowfill}}
\def\lllra{\hbox to 15mm{\rightarrowfill}}
\newcommand{\ignore}[1]{}
\begin{document}

\title[]{Stability Condition on Calabi--Yau Threefold of Complete Intersection of Quadratic and Quartic Hypersurfaces}

\author[Shengxuan Liu]{Shengxuan Liu}

\keywords{Bogomolov--Gieseker type inequality, Clifford type inequality, Bridgeland stability conditions, Calabi--Yau threefold.}

\subjclass[2020]{14F08 (Primary); 14J32, 18G80(Secondary)}

\begin{abstract}
In this paper, we prove a Clifford type inequality for the curve $X_{2,2,2,4}$, which is the intersection of a quartic and three general quadratics in $\mathbb{P}^5$. We thus prove a stronger Bogomolov--Gieseker inequality for characters of stable vector bundles and stable objects on Calabi--Yau complete intersection $X_{2,4}$. Applying the scheme proposed by Bayer, Bertram, Macr\`i, Stellari and Toda, we can construct an open subset of Bridgeland stability conditions on $X_{2,4}$.
\end{abstract}
\maketitle

\setcounter{tocdepth}{1}
\tableofcontents

\section{Introduction}
Stability conditions on a triangulated category were first introduced by Bridgeland in \cite{bridgeland2007stability} to understand $\Pi$-stability, which was proposed by Douglas. Since then, the existence of Bridgeland stability conditions on smooth projective varieties has become a central problem. In a series of works (\cite{bayer2011bridgeland,bayer2011bridgeland1,bayer2016space}), a conjectural Bogomolov--Gieseker inequality is proposed to construct geometric stability conditions on threefolds. The construction is based on the double tilting technique, and the existence of geometric stability conditions relies on the conjectural Bogomolov--Gieseker inequaltiy.

The existence of stability conditions on threefolds is only known for a limited number of examples: Fano threefolds \cite{macri2014generalized,schmidt2014generalized,li2018stability,piyaratne2017stability,bernardara2016bridgeland}, Abelian threefolds \cite{maciocia2015fourier,maciocia2016fourier,bayer2016space}, Kummer type threefolds \cite{bayer2016space}, threefolds with nef tangent bundle \cite{koseki2020nef}, and product varieties of a curve with a surface \cite{liu2021product}. On Calabi--Yau threefolds, apart from the above examples, Li proved the existence of stability conditions on quintic threefolds \cite{li2019stability}, and Koseki proved the existence on some weighted hypersurfaces in $\mathbb{P}_{1,1,1,1,2}$ and $\mathbb{P}_{1,1,1,1,4}$ \cite{koseki2020stability}. They both show the existence by proving that a stronger or modified Bogomolov--Gieseker inequality holds for these varieties. It is worth mentioning that the original conjectural Bogomolov--Gieseker inequality as in \cite{bayer2016space} does not hold for all threefolds. In particular, counterexamples can be found in \cite{Schmidt:CounterBG,martinez2019bridgeland}. However, a modified version of the Bogomolov--Gieseker inequality is sufficient to show the existence. An explanation of the construction can be found in \cite{bayer2016space,li2019stability,koseki2020stability}.

Let $X_{2,4}$ be a polarized smooth complete intersection of quadratic and quartic hypersurfaces with $H=O_{X_{2,4}}(1)$. The main result of this paper, which is the following theorem (\cite[Conjecture 4.1]{bayer2016space}) for $X_{2,4}\subset\mathbb{P}^5$, with mild restrictions on $\alpha,\beta$:

\begin{theorem}[Theorem \ref{thm:main3}]\label{thm:main2}
Conjecture 4.1 in \cite{bayer2016space} holds for $X_{2,4}$ when the parameters $(\alpha,\beta)$ satisfy $\alpha^2+(\beta-\lfloor\beta\rfloor-\frac{1}{2})^2>\frac{1}{4}$. More precisely, assume $E$ is $\nu_{\alpha,\beta,H}$-tilt semistable for some $\alpha^2+(\beta-\lfloor\beta\rfloor-\frac{1}{2})^2>\frac{1}{4}$, then the generalised Bogomolov--Gieseker inequality holds:
$$Q_{\alpha,\beta}(E):=\left(2\alpha-\beta^2\right)\overline{\Delta}_H(E)+4\left(H\ch_2^{\beta H}(E)\right)^2-6H^2\ch_1^{\beta H}(E)\ch_3^{\beta H}(E)\geq0.$$
\end{theorem}

By Theorem \ref{thm:main2} and the framework in \cite{bayer2014mmp,bayer2011bridgeland1,bayer2016space}, see \cite[Theorem 8.6, Proposition 8.10]{bayer2016space}, we have
\begin{theorem}\label{thm:main2'}
There is a continuous family $\sigma^{a,b}_{\alpha,\beta,H}=\left(Z_{\alpha,\beta,H}^{a,b}(X_{2,4}),\mathcal{A}^{\alpha,\beta,H}(X_{2,4})\right)$ of stability conditions on $X_{2,4}$, parameterized by $(\alpha,\beta,a,b)\in\mathbb{R}_{>0}\times\mathbb{R}\times \mathbb{R}_{>0}\times\mathbb{R}$ satisfying
$$\alpha^2+\left(\beta-\lfloor\beta\rfloor-\frac{1}{2}\right)^2>\frac{1}{4}\;\text{ and }\;a>\frac{\alpha^2}{6}+\frac{1}{2}|b|\alpha.$$
\end{theorem}
The detailed notation of the stability condition in the above theorem will be explained in Section 2.

The proof of Theorem \ref{thm:main2} is based on a Bogomolov--Gieseker type inequality on $X_{2,4}$, the smooth projective Calabi--Yau threefold of complete intersection of quadratic and quartic hypersurfaces:

\begin{proposition}[Proposition \ref{prop:main5}]\label{prop:main1}
Suppose $F$ is a slope semistable sheaf on $X_{2,4}$ with $\ch_0(F)\neq0$ then

\begin{equation}
    \frac{H\ch_2(F)}{H^3\ch_0(F)}\leq
    \begin{cases}
      \left(\frac{H^2\ch_1(F)}{H^3\ch_0(F)}\right)^2-\frac{H^2\ch_1(F)}{H^3\ch_0(F)} & \text{if } \frac{H^2\ch_1(F)}{H^3\ch_0(F)}\in[0,\frac{4}{3}-\frac{\sqrt{13}}{3}]\\
      \frac{5}{8}\left(\frac{H^2\ch_1(F)}{H^3\ch_0(F)}\right)^2-\frac{1}{8} & \text{if }\frac{H^2\ch_1(F)}{H^3\ch_0(F)}\in(\frac{4}{3}-\frac{\sqrt{13}}{3},\frac{1}{2}]\\
      \frac{5}{8}\left(\frac{H^2\ch_1(F)}{H^3\ch_0(F)}\right)^2-\frac{1}{4}\frac{H^2\ch_1(F)}{H^3\ch_0(F)} & \text{if } \frac{H^2\ch_1(F)}{H^3\ch_0(F)}\in(\frac{1}{2},\frac{\sqrt{13}}{3}-\frac{1}{3})\\
      \left(\frac{H^2\ch_1(F)}{H^3\ch_0(F)}\right)^2-\frac{1}{2} & \text{if }\frac{H^2\ch_1(F)}{H^3\ch_0(F)}\in[\frac{\sqrt{13}}{3}-\frac{1}{3},1)
    \end{cases}
\end{equation}
and for $\frac{H^2\ch_1(F)}{H^3\ch_0(F)}\in[n,n+1)$ for $n\in\mathbb{Z}$, we take $F(-nH)$ with the property $\frac{H^2\ch_1(F(-nH))}{H^3\ch_0(F(-nH))}\in[0,1)$. 
\end{proposition}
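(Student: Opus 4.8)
The plan is to deduce this inequality from the Clifford type inequality for the curve $C=X_{2,2,2,4}$ established above, by restricting $F$ to $C$ and then converting the curve estimate into a bound on $\ch_2$ via Hirzebruch--Riemann--Roch. First I would carry out the standard reductions. Since $\ch_0(F)>0$ and the statement is normalized so that it is invariant under $F\mapsto F(-nH)$, I assume $\bar\beta:=\frac{H\ch_1(F)}{H^2\ch_0(F)}\in[0,1)$. As $\ch_0,\ch_1,\ch_2$ are additive and all Jordan--Hölder factors of a slope-semistable $F$ share the same normalized slope $\bar\beta$, the quantity $\frac{\ch_2(F)}{H^2\ch_0(F)}$ is a rank-weighted average of the corresponding quantities of the stable factors; hence it suffices to treat $F$ slope-stable. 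Passing to the reflexive hull $F^{\vee\vee}$ preserves $\ch_0,\ch_1$ and stability and can only increase $\ch_2$ (the cokernel is supported in codimension $\geq 2$, contributing $H\ch_2\geq 0$), so I may assume $F$ reflexive, hence locally free away from a finite set.

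Next I restrict to the curve. For general $Q_1,Q_2\in|2H|$ the complete intersection $C=X_{2,4}\cap Q_1\cap Q_2\cong X_{2,2,2,4}$ is smooth and avoids the non-locally-free locus of $F$ by Bertini, and by a restriction theorem for slope-semistable sheaves (this is where genericity of $Q_1,Q_2$ is used) $E:=F|_C$ is a slope-semistable vector bundle on $C$ with $\rk E=\ch_0(F)$ and $\deg E=C\cdot\ch_1(F)=4\,H^2\ch_1(F)$. Adjunction gives $\omega_C=\mathcal{O}_C(4H)$, so for $\bar\beta\in[0,1)$ the slope $\mu(E)=4H^3\bar\beta$ lies well inside the range $[0,\deg\omega_C]$ where the Clifford type inequality for $C$ applies.

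The heart of the argument is to turn the Clifford bound on $C$ into the bound on $\ch_2(F)$. For each small integer $m$ I would apply the Clifford type inequality to the twist $E(mH)$ to bound $h^0(C,E(mH))$. Choosing $m$ so that the relevant twists of $F$ have negative slope, slope-stability forces $H^0$ of those twists on $X$ to vanish; via the Koszul resolution of $\mathcal{I}_C$ this makes the restriction map $H^0(X,F(mH))\to H^0(C,E(mH))$ injective, and Serre duality with $\omega_X=\mathcal{O}_X$ controls the top cohomology. Feeding these vanishings into Hirzebruch--Riemann--Roch on $X$ (directly, or through an intermediate surface $S\in|2H|$), in which $\ch_2(F)$ enters $\chi(X,F(mH))$ through the term $m\,H\ch_2(F)$, converts the estimate on $h^0(C,E(mH))$ into a candidate upper bound $B_m(\bar\beta)$ for $\frac{\ch_2(F)}{H^2\ch_0(F)}$. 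The asserted four-part formula should then emerge as the lower envelope $\min_m B_m(\bar\beta)$: the two parabolas $\bar\beta^2-\bar\beta$ and $\bar\beta^2-\frac12$ and the two $\frac58$-parabolas correspond to different regimes of the Clifford inequality (line-bundle-dominated versus generic), and the breakpoints $\frac{4}{3}-\frac{\sqrt{13}}{3}$, $\frac12$, $\frac{\sqrt{13}}{3}-\frac13$ are precisely the crossing points of consecutive candidates, as one checks by solving e.g. $\bar\beta^2-\bar\beta=\frac58\bar\beta^2-\frac18$.

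I expect the main obstacle to be this last cohomological translation. Extracting a clean one-sided bound on $\ch_2(F)$ requires controlling the intermediate cohomology $h^1,h^2$ of the twists $F(mH)$ on $X$ (not only $h^0$ and the top degree), so that the Clifford estimate on $C$ is not diluted and the sign of the $\ch_2$-coefficient $m$ is exploited correctly. Managing this—together with checking that the optimal twist $m$ in each $\bar\beta$-subinterval reproduces exactly the stated coefficients, and ensuring the semistability of $E$ demanded by the restriction theorem—is where the real work concentrates; the genericity of $Q_1,Q_2$ is used both to guarantee semistability of $E$ and to guarantee that $C$ attains the Clifford index underlying the main inequality.
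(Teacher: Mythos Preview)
Your broad strategy---restrict to the curve $C=X_{2,2,2,4}$, apply the Clifford type inequality there, and convert back via Riemann--Roch---matches the paper's, but two of the steps you treat as routine are precisely where the actual work lies, and as written they do not go through.

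First, the restriction step. You appeal to a generic ``restriction theorem for slope-semistable sheaves'' to conclude that $E=F|_C$ is semistable. Classical restriction theorems (Mehta--Ramanathan, Flenner, Langer) require either sufficiently high multiples of $H$ or effective bounds depending on the rank and discriminant of $F$; they do not guarantee that restriction by two general members of $|2H|$ preserves semistability for an arbitrary slope-stable $F$. The paper avoids this entirely: it argues by contradiction, takes a counterexample $F$ with minimal discriminant $\overline{\Delta}_H$, and shows that such an $F$ must be $\nu_{\alpha,0,H}$-tilt stable for \emph{all} $\alpha>0$ and $\nu_{\alpha',1,H}$-tilt stable for all $\alpha'>\tfrac12$. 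This is exactly the hypothesis needed for Feyzbakhsh's restriction lemma, which then gives semistability of $F|_C$ without any genericity or degree constraint. Your reduction to slope-stable reflexive sheaves does not supply this tilt-stability, and without it the restriction is unjustified.

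Second, isolating $\ch_2$. On the Calabi--Yau threefold $X_{2,4}$ one has $\chi(F)=\ch_3(F)+\tfrac{7}{12}H^2\ch_1(F)$, so $\ch_2$ never appears in $\chi$ except through $\ch_3$ of twists, entangled with the uncontrolled $\ch_3(F)$. Your plan to vary $m$ and take a lower envelope does not explain how to eliminate $\ch_3(F)$. The paper instead first proves the inequality on the intermediate \emph{surface} $S'=X_{2,2,4}$, where Riemann--Roch reads $\chi(F)=\ch_2(F)-H\ch_1(F)+20\ch_0(F)$ and $\ch_2$ appears directly. There one bounds $\chi(F)\le \hom(\mathcal O_{S'},F)+\hom(\mathcal O_{S'},F^\ast(2H))$ (Serre duality with $\omega_{S'}=\mathcal O(2H)$), then bounds each term by $h^0$ on $C$ and applies the Clifford inequality to $F|_C$ and $F^\ast(2H)|_C$ simultaneously; the three case distinctions in the Clifford bound produce the piecewise formula, not a varying twist $m$. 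Only afterwards is the surface bound transported to $X_{2,4}$. You mention ``or through an intermediate surface $S\in|2H|$'' in passing, and that is in fact the only route that works.
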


Recall that the classical Bogomolov inequality tells us that for a slope semistable sheaf $F$ on $X_{2,4}$, we have $\frac{H\ch_2(F)}{H^3\rk(F)}\leq \frac{1}2\left(\frac{H^2\ch_1(F)}{H^3\rk(F)}\right)^2$. From this we note that the above inequality is strictly stronger than the classical Bogomolov inequality unless $\frac{H^2\ch_1(F)}{H^3\rk(F)}$ takes integer values. 

On the other hand, as we prove the Bogomolov--Gieseker inequality for $X_{2,4}$, by applying the framework of \cite{feyzbakhsh2020curve,feyzbakhsh2021rank,Feyzbakhsh:2022,Feyzbakhsh}, we get explicit descriptions and properties of many enumerative invariants. In particular, let $v$ be a numerical K-theory class of rank $>0$ on $X_{2,4}$. Let $J(v)$ denote the Joyce--Song's generalized Donaldson--Thomas invariant counting the number of Gieseker semistable sheaves. Then by \cite{Feyzbakhsh:2022}, we have 
$$J(v)=F(J(\alpha_1),J(\alpha_2),...),$$
where $F$ is a universal polynomial in the invariants $J(\alpha_i)$ and the $\alpha_i$'s are rank $1$ characters. By \cite{Pand}, MNOP conjecture holds for $X_{2,4}$ and we can replace the $J(\alpha_i)$'s by corresponding Gromov--Witten invariants.

\subsection{Strategy of Proof}

In this subsection, we explain how we prove Theorem \ref{thm:main2}. The general logical flow will follow \cite{li2019stability,koseki2020stability} and thus we briefly review the proof of \cite{li2019stability}. Let $X_5$ be a smooth quintic threefold with polarization $H=O_{X_5}(1)$. Consider the following tower of smooth varieties:
$$C_{2,2,5}\subset S_{2,5}\subset X_5\;\;\text{and}\;\; C_{2,2,5}\subset S_{2,2},$$
where $C_{2,2,5}$, $S_{2,5}$, and $S_{2,2}$ are generic smooth complete intersections of $(2,2,5)$, $(2,5)$, and $(2,2)$ hypersurfaces, respectively. Then Theorem \ref{thm:main2} for $X_5$ is proved in following three steps:

\begin{enumerate}
    \item For a stable vector bundle $F$ on $C_{2,2,5}$, by pushing forward along the embedding $C_{2,2,5}\subset S_{2,2}$, one regards $F$ as a torsion sheaf on $S_{2,2}$. Then by wall-crossing technique on the Bridgeland stability space on $S_{2,2}$ as in \cite{feyzbakhsh2019mukai}, one gets a Clifford type inequality of $F$.
    \item Using Feyzbakhsh's restriction theorem in \cite{feyzbakhsh2016stability}, one recovers a Bogomolov--Gieseker type inequality on $S_{2,5}$. Using Feyzbakhsh's restriction theorem again, a Bogomolov--Gieseker type inequality on $X_5$ is obtained.
    \item By applying the Bogomolov--Gieseker type inequality on $X_5$ in step (b), one proves that $Q_{0,0}(E)\geq0$ for a Brill--Noether stable object $E$. Then by \cite[Theorem 5.4]{bayer2016space}, one gets Theorem \ref{thm:main2}.
\end{enumerate}

The main difference of this paper compared to \cite{li2019stability, koseki2020stability} is step (a), where we do not embed the curve $X_{2,2,2,4}$ into a del Pezzo surface, but a K3 surface of Picard rank $1$. Unlike the proof of \cite{li2019stability, koseki2020stability}, we do not have Hom vanishings of Brill--Noether semistable objects as in their cases of del Pezzo surfaces. Although a bound for global sections $\hom(O_S,E)$ is already given in \cite{feyzbakhsh2019mukai}, this bound is not strong enough to produce a Clifford type inequality to prove Theorem \ref{thm:main5}. Instead we bound $\hom(O_S,E)$ directly if $E$ is Brill--Noether semistable of negative slope, or $\ext^1(O_S,E)$ if $E$ is Brill--Noether semistable of positive slope and then use $\hom(O_S,E)=\chi(O_S,E)+\ext^1(O_S,E)$ to get the bound. The main technique we use is the semistability of the spherical twist of $E$ as in Proposition \ref{prop:boundsbnobj}. Next we use the convex polygon trick in \cite{feyzbakhsh2018higher} to get the Clifford type inequality, which is similar to \cite[Section 4]{li2019stability}.

\subsection{Plan of Paper}
The plan of this paper is the following. In Section 2, we recall basic notations, constructions and properties of stability conditions. In Section 3, with a similar method to \cite{li2019stability}, we prove Theorem \ref{thm:main2} by assuming that the results in Section 5, especially Theorem \ref{thm:main5}, hold. This corresponds to step (c) in Section 1.1. In Section 4, we deduce a Clifford type inequality for the curve  $C:=X_{2,2,2,4}$ for a slope semistable vector bundle $E$. The method follows the last part in Section 1.1. In Section 5, we deduce Theorem \ref{thm:main5} by using the Clifford type inequality in Section 4. The proof is similar to \cite{li2019stability}. This corresponds to step (b).

\section{Introduction to Bridgeland Stability}
\subsection{Notations of Slope Stability}
In this section, we review notations of slope stability conditions. Let $X$ be a smooth projective variety. The facts concerning slope stability can all be found in \cite{huybrechts2010geometry}.

\begin{definition}[Slope stability]
Let $F$ be a coherent sheaf on $X$ and let $H$ be an ample divisor. The slope of $F$ with respect to $H$ is
$$\mu_H(F)=
\begin{cases}
  \frac{H^{n-1}\ch_1(F)}{H^n\ch_0(F)} & \text{if }\ch_0(F)\neq0\\
  +\infty & \text{if } \ch_0(F)=0
\end{cases}
$$
A coherent sheaf $F$ is said to be slope (semi)stable if for any nontrivial coherent subsheaf $E\subset F$,
$$\mu_H(E)<(\leq)\mu_H(F/E).$$
\end{definition}

\begin{proposition}[Harder--Narasimhan Filtration]
For every coherent sheaf $F$, there is a filtration, called Harder--Narasimhan filtration, 
$$0=F_0\subset F_1\subset...\subset F_n=F$$
where $F_i/F_{i-1}$ is slope semistable and
$$\mu_H(F_1/F_0)>\mu_H(F_2/F_1)>...>\mu_H(F_n/F_{n-1}).$$
We denote $\mu^+(F)\;(\text{respectively}\;\mu^-(F))$ to be the maximum (respectively minimum) slope of the HN factors of $F$ under the slope stability function.
\end{proposition}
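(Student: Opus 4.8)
The statement asks for existence of the Harder--Narasimhan filtration (uniqueness follows from the same ingredients), and the plan is the classical two-step argument: first isolate a unique \emph{maximal destabilizing subsheaf}, then peel it off and induct. Throughout I write $\deg(E):=H^{n-1}\ch_1(E)$ for $n=\dim X$, so that $\mu_H(E)=\deg(E)/\ch_0(E)$ when $\ch_0(E)\neq 0$, and I repeatedly invoke the \emph{seesaw inequality}: for a short exact sequence $0\to A\to B\to C\to 0$ the slope $\mu_H(B)$ is the mediant of $\mu_H(A)$ and $\mu_H(C)$, hence lies weakly between them. \textbf{Step 1 (boundedness of slopes).} I first make sense of $s(F):=\sup\{\mu_H(E)\mid 0\neq E\subseteq F\}$. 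If the torsion subsheaf $T\subseteq F$ is nonzero then $\mu_H(T)=+\infty$; since a subsheaf has slope $+\infty$ exactly when it is torsion, $T$ is the maximal torsion subsheaf, it is semistable under the given convention, and $F/T$ is torsion-free, so I split $T$ off as the first factor and reduce to $F$ torsion-free. For $F$ torsion-free I claim $s(F)<\infty$: on a smooth projective $X$ with ample $H$, the degrees $\deg(E)$ of subsheaves $E\subseteq F$ of fixed rank are bounded above, which I would quote from \cite{huybrechts2010geometry}. As the rank of a subsheaf lies in $\{1,\dots,\rk(F)\}$ and the resulting slopes lie in a bounded-above subset of $\tfrac{1}{\rk(F)!}\Z$, the supremum $s(F)$ is finite and attained.

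\textbf{Step 2 (maximal destabilizing subsheaf).} Among subsheaves attaining $\mu_H=s(F)$ I want a largest one. The key point is closure under sum: if $\mu_H(E)=\mu_H(F_1)=s(F)$, then applying seesaw to $0\to E\cap F_1\to E\oplus F_1\to E+F_1\to 0$, together with $\mu_H(E\cap F_1)\le s(F)$ and $\mu_H(E+F_1)\le s(F)$ (both are subsheaves of $F$), forces $\mu_H(E+F_1)=s(F)$ as well. Hence the family of slope-$s(F)$ subsheaves is directed under inclusion, and the Noetherian property of $\Coh(X)$ excludes an infinite strictly ascending chain, yielding a unique maximal member $F_1$. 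It is semistable because every subsheaf of $F_1$ has slope $\le s(F)=\mu_H(F_1)$, and by construction it contains every subsheaf of slope $\ge\mu_H(F_1)$.

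\textbf{Step 3 (induction).} I take $F_1$ to be this maximal destabilizing subsheaf, let $\bar F_2\subseteq F/F_1$ be the maximal destabilizing subsheaf of the quotient, let $F_2$ be its preimage in $F$, and iterate; each factor $F_i/F_{i-1}$ is semistable by Step 2. Strict decrease of slopes is where maximality is used: if $\mu_H(F_2/F_1)\ge\mu_H(F_1)$, then seesaw on $0\to F_1\to F_2\to F_2/F_1\to 0$ would give $\mu_H(F_2)\ge s(F)$ with $F_2\supsetneq F_1$, contradicting maximality of $F_1$; hence $\mu_H(F_1/F_0)>\mu_H(F_2/F_1)$, and likewise down the chain. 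Termination is automatic since $\rk(F/F_i)$ strictly decreases, so the process is a finite Noetherian induction.

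\textbf{Main obstacle.} Essentially all of this is formal manipulation of the seesaw inequality together with the Noetherian property of $\Coh(X)$; the only ingredient that genuinely uses the geometry of $(X,H)$ is the boundedness of degrees at fixed rank in Step 1, so that is the point I would be most careful to state and cite correctly.
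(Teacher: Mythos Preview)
Your argument is essentially correct and is the standard proof; the paper, however, does not prove this proposition at all---it merely states it as background and refers the reader to \cite{huybrechts2010geometry} for the facts on slope stability. So there is nothing to compare against beyond noting that your sketch is precisely the classical argument one finds in that reference.

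One small imprecision: you assert that the slopes of subsheaves lie in $\tfrac{1}{\rk(F)!}\Z$, which presumes $\deg(E)=H^{n-1}\ch_1(E)\in\Z$. In general $H$ need only be ample, not integral, but since $\ch_1(E)$ ranges over the finitely generated group $\NS(X)$, the degrees lie in a discrete subgroup of $\R$, and that is all you need for the supremum to be attained. Otherwise the proof is fine.
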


\subsection{Weak Stability Condition and Bridgeland Stability Condition}

Let $D^b(X)$ be the bounded derived category of coherent sheaves on $X$. From now on we consider that $X$ is of dimension $2$ or $3$.
\begin{definition}[Heart of a bounded t-structure]
A full additive subcategory $\mathcal{A}\subset D^b(X)$ is called the heart of a bounded t-structure if
\begin{enumerate}
    \item $\Hom(\mathcal{A}[i],\mathcal{A}[j])=0$ if $i>j$.
    \item For every $E\subset D^b(X)$, there is a chain of morphisms
    $$0=E_0\xrightarrow{\phi_0}E_1\xrightarrow{\phi_1}E_2\xrightarrow{\phi_2}...\xrightarrow{\phi_{n-1}}E_n=E$$
    such that $\Cone(\phi_i)\in\mathcal{A}[k_i]$ and $k_0>k_2>...>k_{n-1}$.
\end{enumerate}
\end{definition}
One can actually show that $\mathcal{A}$ is an abelian category.
\begin{definition}[Weak Stability Function]
Let $\mathcal{A}$ be an abelian category. A group homomorphism $Z:K_0(\mathcal{A})\rightarrow \mathbb{C}$ is called a weak stability function on $\mathcal{A}$ if, for $E\in\mathcal{A}$, we have $\Im(Z([E]))\geq 0$, with $\Im(Z([E]))=0\;\implies\;\Re(Z([E]))\leq 0$. The function $Z$ is called a stability function if, moreover, for all $0\neq E\in\mathcal{A}$, $\Im(Z([E]))=0\;\implies\;\Re(Z([E]))<0.$
\end{definition}

The original definition of Bridgeland stability is given by \cite{bridgeland2007stability}. By \cite[Proposition 5.3]{bridgeland2007stability}\cite[Lemma 5.11]{macri2017lectures}, a pair $\sigma=(\mathcal{A},Z)$, where $\mathcal{A}$ is the heart of a bounded t-structure, and the function $Z:K_0(\mathcal{A})\rightarrow\mathbb{C}$ is a group homomorphism, is a Bridgeland stability condition if it satisfies the following properties:
\begin{enumerate}
    \item[(1)]  The group homomorphism $Z$ is a stability function. We define the slope of a nonzero object $E$ in $\mathcal A$ to be $$\nu(E):= 
    \begin{cases}
  -\frac{\Re(Z([E]))}{\Im(Z([E]))} & \text{if }\Im(Z([E]))\neq0\\
  +\infty & otherwise.
\end{cases}$$ An object $E$ is called $\sigma$-(semi)stable if for any nonzero proper subobject $F\subset E$, we have $\nu(F)<(\leq)\nu(E/F)$.
    \item [(2)](HN filtrations) For any nonzero object $E\in\mathcal{A}$, we have a filtration of $E$, called the Harder--Narasimhan filtration:
$$0=E_0\subset E_1\subset...\subset E_m=E,$$
where $\nu(E_1/E_0)>\nu(E_2/E_1)>...>\nu(E_m/E_{m-1})$ and $E_i/E_{i-1}$ are semistable.

\item [(3)] (Support property) There is a constant $C>0$ such that for any semistable object $E$, we have $||E||\leq C\abs{Z([E])}$, where $||\cdot||$ is a fixed norm on $K_0(X)\otimes \mathbb{R}$.
\end{enumerate}
The pair $\sigma$ is called a weak stability condition if in (1), we only require $Z$ to be a weak stability function. Let $Stab(X)$ denote the set of all Bridgeland stability conditions on $X$, then Bridgeland showed that $Stab(X)$ is actually a complex manifold \cite[Proposition 8.1]{bridgeland2007stability}. Now we fix a class $v\in K_{num}(X)$, and consider the class of $\sigma$-semistable objects of class $v$ when $\sigma$ varies. Then we have a wall and chamber structure on $Stab(X)$ based on the following proposition:

\begin{proposition}[{\cite[Section 9]{bridgeland2008stability}\cite[Proposition 2.8]{Toda2008moduli}\cite[Proposition 3.3]{Bayer2011space}\cite[Proposition 2.3]{Bayer2014projectivity}}]
There exists a locally finite set $\mathscr{W}$ of real codimension $1$ submanifolds with boundary in $Stab(X)$ only depending on $v$, called walls, with the following properties:
\begin{enumerate}
    \item When $\sigma$ varies in a chamber (connected components of $Stab(X)\backslash (\cup_{W\in\mathscr{W}}W)$), then the sets of $\sigma$-semistable objects and $\sigma$-stable objects do not change.
    \item When $\sigma$ is on a single wall $W\in\mathscr{W}$, then there exists an object $F$ such that $F$ is unstable on the adjacent chamber of one side of the wall and semistable on the adjacent chamber on the other side of the wall.
\end{enumerate}

\end{proposition}

An important technique to construct weak or Bridgeland stability conditions is by tilting, which we recall now. Most of the materials here can be found in \cite{bayer2016space,macri2017lectures,li2019stability}.

We define the torsion pair $(\mathcal{T}_{\beta,H},\mathcal{F}_{\beta,H})$ as
$$\mathcal{T}_{\beta,H}=\left\{E\in \Coh(X)\middle|\mu_H^-(E)>\beta\right\}\;\;\text{and}\;\; \mathcal{F}_{\beta,H}=\left\{E\in\Coh(X)\middle|\mu_H^+(E)\leq\beta\right\}.$$
\begin{definition}
We let $\mathrm{Coh}^{\beta,H}(X)\subset D^b(X)$ be the extension-closure
$$\langle \mathcal{T}_{\beta,H},\mathcal{F}_{\beta,H}[1]\rangle.$$
\end{definition}

Then, by tilting, $\mathrm{Coh}^{\beta,H}(X)$ is the heart of a bounded t-structure, in particular an abelian category. Let $B\in NS(X)_\mathbb{R}$. We define the twisted  characters as
$$\ch^B_0(E)=\ch_0(E); \;\; \ch^B_1(E)=\ch_1(E)-B\ch_0(E);\;\;\ch_2^B(E)=\ch_2(E)-B\ch_1(E)+\frac{B^2\ch_0(E)}{2}$$
$$\ch^B_3(E)=\ch_3(E)-B\ch_2(E)+\frac{B^2\ch_1(E)}{2}-\frac{B^3\ch_0(E)}{6}$$

\begin{definition}[Tilt slope and stability]
Let $E\in \mathrm{Coh}^{\beta,H}(X)$, we define
$$\nu_{\alpha,\beta,H}(E)=
\begin{cases}
  \frac{H^{n-2}\ch_2(E)-\alpha H^n\ch_0(E)}{H^{n-1}\ch_1^{\beta H}(E)} & \text{if }H^{n-1}\ch_1^{\beta,H}(E)\neq0\\
  +\infty & \text{if }H^{n-1}\ch_1^{\beta H}(E)=0.
\end{cases}
$$
An object $E\in \mathrm{Coh}^{\beta,H}(X)$ is called $\nu_{\alpha,\beta,H}$-tilt (semi)stable if for any nontrivial subobject $F\subset E$ inside $\mathrm{Coh}^{\beta,H}(X)$, we have
$$\nu_{\alpha,\beta,H}(F)<(\leq)\nu_{\alpha,\beta,H}(E/F).$$
An object $E\in D^b(X)$ is called $\nu_{\alpha,\beta,H}$-tilt (semi)stable if $E[n]\in \mathrm{Coh}^{\beta,H}(X)$ is $\nu_{\alpha,\beta,H}$-tilt (semi)stable for some integer $n$.
\end{definition}
Same as slope stability, $\nu_{\alpha,\beta,H}$-stability also admits Harder--Narasimhan filtration property when $\alpha>\frac{\beta^2}{2}$. For an object $E\in \mathrm{Coh}^{\beta,H}(X)$, we denote $\nu^+_{\alpha,\beta,H}(E)$ (respectively $\nu^-_{\alpha,\beta,H}(E)$) as the maximum (respectively minimum) slopes of $\nu_{\alpha,\beta,H}$-HN filtration factors. We also write the central charge
$$Z_{\alpha,\beta,H}(E):=-\left(H^{n-2}\ch_2(E)-\alpha H^n\ch_0(E)\right)+iH^{n-1}\ch_1^{\beta H}(E).$$
\begin{definition}
Let $E\in \Coh^{\beta,H}(X)$. We define the $H$-discriminant of $E$ to be 
$$\overline{\Delta}_H(E):=\left(H^{n-1}\ch_1(E)\right)^2-2H^n\ch_0(E)\cdot H^{n-2}\ch_2(E).$$
\end{definition}
\begin{theorem}[\cite{bogomolov1978holomorphic},{\cite[Theorem 7.3.1]{bayer2011bridgeland}},{\cite[Proposition 2.21]{piyaratne2019moduli}}]\label{thm:bogomolov}
Let $E$ be a $\nu_{\alpha,\beta,H}$-semistable object for $\alpha>\frac{1}{2}\beta^2$. Then $\overline{\Delta}_H(E)\geq0.$
\end{theorem}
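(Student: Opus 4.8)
The statement is the classical Bogomolov--Gieseker inequality for tilt-semistable objects, so the plan is to reduce it to the usual Bogomolov inequality for slope-semistable sheaves and then to propagate that inequality down through the walls of tilt-stability. First I would record the two structural features of $\overline{\Delta}_H$ that make everything work: it depends on $E$ only through the triple $(H^n\ch_0(E),H^{n-1}\ch_1(E),H^{n-2}\ch_2(E))$ via the quadratic form $q(r,c,d)=c^2-2rd$, and a one-line computation shows it is invariant under $E\mapsto E\otimes\mathcal{O}(kH)$. The form $q$ has signature $(2,1)$, and this single fact is what ultimately forces the inequality.

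The base case is the large-volume limit $\alpha\to+\infty$. Invoking the structural description of tilt-stability there (local finiteness of walls and boundedness of the largest wall), for $\alpha\gg0$ every $\nu_{\alpha,\beta,H}$-semistable object is, up to shift, either a sheaf supported in codimension $\ge 2$ --- for which $H^n\ch_0=H^{n-1}\ch_1=0$ and hence $\overline{\Delta}_H=0$ --- or a slope-semistable torsion-free sheaf $F$. In the latter case I would restrict to a smooth complete-intersection surface $S\in|H|^{n-2}$, on which $F|_S$ stays slope-semistable, reducing $\overline{\Delta}_H(F)$ to the surface discriminant $\Delta(F|_S)\ge 0$, i.e. the classical Bogomolov inequality. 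This settles $\overline{\Delta}_H\ge 0$ for all semistable objects when $\alpha\gg0$.

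For a general $\nu_{\alpha_0,\beta,H}$-semistable $E$ with $\alpha_0>\tfrac12\beta^2$ I would then increase $\alpha$. There are finitely many walls in $[\alpha_0,\infty)$, and above the top wall $E$ is semistable and the base case applies, so it suffices to descend across one wall at a time. At a wall $E$ acquires a Jordan--H\"older filtration whose factors $E_i$ are $\nu_{\alpha,\beta,H}$-semistable with strictly smaller $H^{n-1}\ch_1^{\beta H}(E_i)$, so by induction $\overline{\Delta}_H(E_i)\ge 0$. The inequality for $E$ then follows from the key lemma that $\overline{\Delta}_H$ is \emph{super-additive} along a wall: writing $v(E)$ for the numerical triple of $E$, if $F$ and $G$ share a tilt-slope $\nu$ at $(\alpha,\beta)$ and both satisfy $H^{n-1}\ch_1^{\beta H}>0$, then the associated bilinear form $B$ of $q$ satisfies $B(v(F),v(G))\ge 0$, whence $\overline{\Delta}_H(E)=q(v(F)+v(G))\ge\overline{\Delta}_H(F)+\overline{\Delta}_H(G)\ge 0$. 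Concretely, $v(F)$ and $v(G)$ lie in the plane $\Pi=\{d=\nu c+(\alpha-\nu\beta)r\}$; on $\Pi$ the form $q$ has signature $(1,1)$ because $\alpha>\tfrac12\beta^2$, the kernel of the functional $c-\beta r$ meets $\Pi$ inside the negative cone of $q|_\Pi$, and so the condition $H^{n-1}\ch_1^{\beta H}>0$ confines both classes to a single component of $\{q|_\Pi\ge 0\}$, on which $B\ge 0$.

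The hard part will be the bookkeeping rather than any single inequality: making the wall-crossing induction genuinely well-founded (finiteness of walls in $[\alpha_0,\infty)$ together with the strict descent of $H^{n-1}\ch_1^{\beta H}$ through Jordan--H\"older factors), and citing the correct large-volume structural statement for the base case. The signature computation on $\Pi$ is the conceptual heart but is only a finite calculation; once it and the finiteness of walls are in hand, $\overline{\Delta}_H(E)\ge 0$ follows for every $\nu_{\alpha,\beta,H}$-semistable $E$ with $\alpha>\tfrac12\beta^2$.
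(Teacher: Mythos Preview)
The paper does not give its own proof of this theorem: it is stated with citations to Bogomolov, Bayer--Macr\`i--Toda, and Piyaratne--Toda, and no argument follows the statement. Your proposal is therefore not being compared against anything in the paper itself; rather, you have sketched the standard proof from the cited references. That sketch is essentially correct and follows the approach of \cite[Theorem~7.3.1]{bayer2011bridgeland}: reduce to the classical Bogomolov inequality in the large-volume limit, then descend through the finitely many walls using the signature~$(2,1)$ property of $\overline{\Delta}_H$ to show super-additivity along a Jordan--H\"older filtration of equal tilt-slope.

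One small caution on your base case: the restriction-to-a-surface step (invoking Mehta--Ramanathan or a variant) is not needed in the references and introduces an unnecessary genericity hypothesis on $H$; the classical Bogomolov inequality and its Hodge-index consequence $(H^{n-1}\ch_1)^2\ge 2H^n\ch_0\cdot H^{n-2}\ch_2$ are already available for slope-semistable torsion-free sheaves in any dimension. Otherwise your induction scheme, including the observation that $H^{n-1}\ch_1^{\beta H}$ strictly decreases on Jordan--H\"older factors to make the descent well-founded, matches the literature argument.
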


\subsection{Bridgeland Stability on K3 surfaces}

In this section we recall the Bridgeland stability condition on K3 surfaces. Most of this can be found in \cite{bridgeland2008stability}. A review of it can be found in \cite{feyzbakhsh2018higher}.

Let $(S,H)$ be a polarized K3 surface with Picard group generated by $H$. We use the same heart of bounded t-structure $\mathrm{Coh}^{\beta,H}(S)$ as above,
and define the central charge as
$$Z_{\alpha,\beta}(E)=-\ch_2(E)+\alpha \rk(E)+i\left(\frac{H\ch_1(E)}{H^2}-\beta \rk(E)\right).$$
Note that this is slightly different from the central charge given above, but they give the same family of weak stability conditions. We will only use this family of stability conditions on K3 surfaces.

Now we define a periodic function of period 1,  
$$\gamma(x)=
\begin{cases}
  1-x^2 & \text{if } x\in[-\frac{1}{2},\frac{1}{2}]\backslash \{0\}\\
  0 & \text{if } x=0.
\end{cases}$$ 
Let $\Gamma(x)=\frac{H^2}{2}x^2-\gamma(x)$, and $\Gamma_+$ to be the region above $\Gamma$. Then Bridgeland showed that $(\alpha,\beta)$ with $\alpha>\Gamma(\beta)$ defines a stability condition $\sigma_{\alpha,\beta}=(\mathrm{Coh}^{\beta,H}(X),Z_{\alpha,\beta})$ on K3 surfaces (\cite{bridgeland2008stability,feyzbakhsh2018higher}) by using Theorem \ref{thm:bogomolov}. The slope is defined to be  $-\frac{\Re(Z_{\alpha,\beta})}{\Im(Z_{\alpha,\beta})}$. If $E$ is an object with $\rk(E)\neq0$, we define 
$$pr(E):=pr(\ch(E))=\left(\frac{H\ch_1(E)}{H^2\rk(E)},\frac{\ch_2(E)}{\rk(E)}\right).$$

By abuse of notation, we also write $\Gamma$ for the graph on the plane.
Then we have a description of walls:

\begin{proposition}[{\cite[Proposition 9.3]{bridgeland2008stability}}]\label{prop:k3w}
Let $F\in D^b(X)$. Then there exists a collection of line segments (walls) $\mathcal{W}^i_F$ in $\Gamma_+$ with the following property:
\begin{enumerate}
    \item[(1)] The end points are either on $\Gamma$, or on the vertical line segment $(n,\frac{H^2}{2}n^2)$ to $\left(n,\frac{H^2}{2}n^2-1\right)$;

\item[(2)] The extension of the wall passes through $pr(F)$ if $\rk(F)\neq0$, otherwise it has slope $\frac{H^2\ch_2(F)}{H\ch_1(F)}$;

\item[(3)] The stability of $F$ does not change between two consecutive walls;

\item[(4)]  $F$ is strictly $\sigma_{\alpha,\beta}$-semistable (in other words semistable but not stable) if $(\alpha,\beta)$ is contained in one of the walls;

\item[(5)] If $F$ is semistable on one side of the wall, then it is unstable on the other side of the wall.
\end{enumerate}
\end{proposition}

The above proposition also holds if we consider a numerical class $v$. Then the proposition holds for potential walls, which means the walls that can happen for an object $F$ with $v(F)=v$. The following proposition is important for us to give an upper bound for the global sections of Brill--Noether semistable objects.

\begin{proposition}[{\cite[Theorem 2.15(a)]{bayer2014mmp}}]\label{prop:gamma}
Let $Stab^\dagger(X)$ denote the connected components of $Stab(X)$ containing geometric stability conditions. In particular, the stability conditions $\sigma_{\alpha,\beta}$ given above are in $Stab^\dagger(X)$. Let $v=mv_0\in H^*_{alg}(X;\mathbb{Z})$ be a Mukai vector with $v_0$ primitive and $m > 0$, and let
$\sigma \in Stab^\dagger(X)$ be a generic stability condition with respect to $v$. (This means that $\sigma$ is not on the wall of $v$.) Then the coarse moduli space $M_\sigma(v)$ is non-empty if and only if $v_0^2\geq -2$.
\end{proposition}

Now we briefly explain how to relate $v_0^2\geq-2$ with the curve $\Gamma$. Let $v_0=(\rk,cH,s)$. If $cH=0$, then $-2\rk \ch_2-2{\rk}^2\geq-2$. This implies $\frac{\ch_2}{\rk}\leq\frac{1}{{\rk}^2}-1\leq0$. Now suppose $c\neq 0$. Suppose $\frac{H\ch_1}{H^2\rk}\in[-\frac{1}{2},\frac{1}{2}]$, then we have 
$$-2\rk \ch_2+(\ch_1)^2-2{\rk}^2\geq-2\geq-2c^2.$$
This implies
$$\frac{\ch_2}{\rk}\leq\frac{H^2}{2}\left(\frac{H\ch_1}{H^2\rk}\right)^2-1+\left(\frac{H\ch_1}{H^2\rk}\right)^2.$$
If $\frac{H\ch_1}{H^2\rk}\in[n-\frac{1}{2},n+\frac{1}{2}]\backslash \{n\}$, we have
$$-2\rk \ch_2+(\ch_1)^2-2{\rk}^2\geq-2\geq-2(c-n\rk)^2.$$
And thus we get the curve $\Gamma$. So we see that if an object is $\sigma$-semistable for some generic $\sigma$, we have $\frac{\ch_2}{\rk}\leq\Gamma\left(\frac{H\ch_1}{H^2\rk}\right).$ For a more detailed explaination of $\Gamma$, we would like to ask readers to consult \cite{lahoz2021chern,fu2021stability} on Le Potier functions.

\subsection{Stability Conditions on $X_{2,4}$}

The goal of this paper is to show that the following conjectural Bogomolov--Gieseker inequality holds when $X$ is the complete intersection of quadratic and quartic hypersurfaces in $\mathbb{P}^5$:

\begin{conjecture}[{\cite[Conjecture 2.7]{bayer2011bridgeland}},{\cite[Conjecture 4.1]{bayer2016space}}]\label{coj}
Let $X$ be a smooth projective threefold of complete intersection of quadratic and quartic hypersurfaces in $\mathbb{P}^5$, and let $H$ be an ample class. Assume $E$ is $\nu_{\alpha,\beta,H}$-tilt semistable for some $\alpha>\frac{1}{2}\beta^2$. Then
$$Q_{\alpha,\beta}(E):=\left(2\alpha-\beta^2\right)\overline{\Delta}_H(E)+4\left(H\ch_2^{\beta H}(E)\right)^2-6H^2\ch_1^{\beta H}(E)\ch_3^{\beta H}(E)\geq0.$$
\end{conjecture}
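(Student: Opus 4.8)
The plan is to follow the strategy that Li used for the quintic threefold and Koseki adapted for weighted hypersurfaces: reduce the inequality $Q_{\alpha,\beta}(E)\ge 0$, asked of every $\nu_{\alpha,\beta,H}$-tilt-semistable object and every admissible $(\alpha,\beta)$, to a single inequality at the boundary point $(\alpha,\beta)=(0,0)$, and then establish that one inequality through a Clifford-type bound on a complete-intersection curve.

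First I would reduce to the point $(\alpha,\beta)=(0,0)$. Since $X_{2,4}$ is Calabi--Yau, Serre duality supplies the symmetry of $\ch_3^{\beta H}$ needed to run the deformation argument of \cite[Proposition 3.1]{li2019stability} essentially verbatim. The locus of $(\alpha,\beta)$ on which $Q_{\alpha,\beta}(E)<0$ for a fixed semistable $E$ is open and disjoint from the large-volume region where the classical Bogomolov inequality already forces $Q\ge 0$; using the $\widetilde{\mathrm{GL}}^+(2,\mathbb{R})$-action, tensoring by $O(-nH)$, and inducting over the tilt Harder--Narasimhan factors produced at each wall, one transports any putative counterexample to the boundary point $(0,0)$, where $Q$ is additive modulo the nonnegative discriminant term $\overline{\Delta}_H$. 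This reduces the conjecture to proving $Q_{0,0}(E)=4\left(H\ch_2(E)\right)^2-6H^2\ch_1(E)\,\ch_3(E)\ge 0$ for objects $E$ that are $\nu_{0,0}$-semistable and Brill--Noether (semi)stable with normalized slope $\tfrac{H\ch_1(E)}{H^2\ch_0(E)}\in[0,1)$.

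Second, I would bound the $\ch_3$ term by restricting to a curve. The stronger Bogomolov--Gieseker bound for sheaves in Proposition \ref{prop:main1} controls $\ch_2$ in terms of $\ch_1$ and $\ch_0$; combined with a bound on $h^0$ (hence on $\ch_3$, via Riemann--Roch and the Calabi--Yau normalization) of the restriction of $E$ to a general curve, this yields $Q_{0,0}(E)\ge 0$. By Feyzbakhsh's restriction lemma a Brill--Noether semistable $E$ restricts to a slope-semistable vector bundle on a general complete-intersection curve $C:=X_{2,2,2,4}$, obtained by cutting $X_{2,4}$ with two further general quadrics. Thus everything comes down to a Clifford-type inequality on $C$, namely Theorem \ref{thm:main4}, bounding the global sections of a slope-semistable bundle on $C$ in terms of its slope.

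The hard part, and the genuinely new ingredient, is this Clifford inequality on $C$. Unlike the del Pezzo restrictions of \cite{li2019stability,koseki2020stability}, there is no Hom-vanishing available for Brill--Noether semistable objects here, and the global-section bound of \cite{feyzbakhsh2019mukai} is too weak. My plan is to embed $C$ into a K3 surface $S:=X_{2,2,4}$ of Picard rank one, with $C\in|2H_S|$, and to bound $\hom(O_S,E)$ directly when the slope is generic negative, or $\ext^1(O_S,E)$ (recovering $\hom$ from $\chi(O_S,E)=\hom-\ext^1$) when it is generic positive. The mechanism is to force the Chern character of the spherical twist of the projection $\mathrm{pr}(E)$ to lie below the bounding curve $\Gamma$, and then to run Bridgeland wall-crossing on $S$: by Bertram's nested wall theorem the Brill--Noether Harder--Narasimhan factors of $E$ are confined to a convex polygon inscribed in the triangle cut out by the potential first wall, and estimating $h^0$ over that triangle produces the sharp bound. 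I expect the main difficulties to be locating the actual first wall for $\ch(E)$ on $S$ and verifying that the spherical-twist trajectory really stays below $\Gamma$, since these are exactly the places where the Picard-rank-one geometry of $S$ must be exploited.
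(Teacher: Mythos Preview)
Your overall strategy---reduce to $Q_{0,0}(E)\ge 0$ for Brill--Noether stable objects with $\nu_{BN}\in[-\tfrac12,\tfrac12]$, then feed in a stronger $\ch_2$-bound obtained via a Clifford inequality on $C=X_{2,2,2,4}$---is the paper's. Two points, however, need correction.

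First, this reduction does not deliver the full conjecture. The argument of \cite[Proposition 3.1]{li2019stability} (equivalently \cite[Theorem 5.4]{bayer2016space}) only reaches those $(\alpha,\beta)$ with $\alpha>\tfrac12\beta^2+\tfrac12(\beta-\lfloor\beta\rfloor)(\lfloor\beta\rfloor+1-\beta)$, i.e.\ exactly Theorem \ref{thm:main3}; the paper does not, and your method would not, remove that restriction. You should not claim the unrestricted statement.

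Second, your auxiliary surface is misidentified. The complete intersection $X_{2,2,4}\subset\mathbb{P}^5$ has canonical bundle $\cO(2H)$ by adjunction and is \emph{not} a K3 surface, so the K3 wall-crossing apparatus (Mukai vectors, spherical twist by $\cO_S$, the curve $\Gamma$) is unavailable there. The paper embeds $C$ instead in the genuine K3 surface $S=X_{2,2,2}$ (three quadrics), on which $C\in|4H|$, not $|2H|$. The surface $S'=X_{2,2,4}$ does appear, but in a different role: Feyzbakhsh's restriction lemma is applied in two steps, $X_{2,4}\to S'=X_{2,2,4}$ and $S'\to C$, to pull the Clifford bound on $C$ back to a $\ch_2$-bound on $S'$ and thence on $X_{2,4}$ (Proposition \ref{prop:main5}, Theorem \ref{thm:main5}); only then is $Q_{0,0}(E)\ge 0$ deduced, and that final step is carried out on the threefold itself via the spherical twist $\tilde E=\mathrm{Cone}(\Hom(\cO_X,E)\otimes\cO_X\to E)$ together with Theorem \ref{thm:main5}, not by restricting $E$ directly to $C$.
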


\begin{remark}
It was expected that Conjecture \ref{coj} is correct for all projective threefolds. However, this is not the case. A counterexample is given in \cite{Schmidt:CounterBG}: The above inequality is violated in blowing up a point on $\mathbb{P}^3$. A weaker Bogomolov--Gieseker inequality is proposed and proved for all Fano threefolds, see \cite{bernardara2016bridgeland}. 
\end{remark}

In this paper, we prove the above conjecture with small restrictions on $\alpha,\beta$:
\begin{theorem}\label{thm:main3}
Let $X$ be a smooth projective threefold of complete intersection of quadratic and quartic hypersurfaces in $\mathbb{P}^5$, and let $H=O_X(1)$. Assume that $E$ is $\nu_{\alpha,\beta,H}$-tilt semistable for some $\alpha>\frac{1}{2}\beta^2+\frac{1}{2}(\beta-\lfloor\beta\rfloor)(\lfloor\beta\rfloor+1-\beta)$. Then $Q_{\alpha,\beta}(E)\geq0.$
\end{theorem}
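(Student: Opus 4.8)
The plan is to follow the method of \cite{li2019stability}: reduce the inequality, for an arbitrary $\nu_{\alpha,\beta,H}$-tilt semistable object at an arbitrary admissible $(\alpha,\beta)$, to the single inequality $Q_{0,0}(E)\ge 0$ for objects tilt stable at the apex $\beta=0,\ \alpha\to 0^+$ (the Brill--Noether stable objects), and then to deduce that base case from the strengthened Bogomolov--Gieseker inequality of Proposition \ref{prop:main1} (equivalently Theorem \ref{thm:main5}). First I would use the symmetries: tensoring by $\OO_X(nH)$ sends a $\nu_{\alpha,\beta,H}$-tilt semistable object to a $\nu_{\alpha,\beta+n,H}$-tilt semistable one, preserves $\overline{\Delta}_H$ and the sign of $Q_{\alpha,\beta}$, and shifts $\lfloor\beta\rfloor$ by $n$, so I may assume $\beta\in[0,1)$, where the hypothesis simplifies to $\alpha>\tfrac12\beta^2+\tfrac12\beta(1-\beta)=\tfrac12\beta$. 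Because $\partial_\alpha Q_{\alpha,\beta}(E)=2\overline{\Delta}_H(E)\ge 0$ by the Bogomolov inequality, $Q_{\alpha,\beta}(E)$ decreases as $\alpha$ drops toward the boundary $\alpha=\tfrac12\beta$, so the worst case sits there; and this curve meets the parabola $\alpha=\tfrac12\beta^2$ exactly at the lattice endpoints $(\beta,\alpha)=(0,0)$ and $(1,\tfrac12)$, at which tilt stability degenerates to slope stability of sheaves. The excluded lens $\tfrac12\beta^2<\alpha\le\tfrac12\beta$ is precisely what the theorem leaves uncovered.

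To carry out the reduction itself I would run a wall-crossing induction on the discriminant $\overline{\Delta}_H(E)$, as in \cite[Prop.~3.1]{li2019stability}. If some tilt semistable $E$ had $Q_{\alpha,\beta}(E)<0$, the support property would produce a genuinely tilt stable object with $Q<0$; decreasing $\alpha$ I either reach the boundary, where $E$ (after an $\OO_X(nH)$-twist, and if necessary the derived dual, which reflects the region about $\beta=0$ and preserves $Q$ and $\overline{\Delta}_H$) is the tilt of a Brill--Noether stable object, or I first meet a wall, at which $E$ becomes strictly semistable and splits into factors of strictly smaller discriminant to which the inductive hypothesis applies. The standard convexity estimate for $Q$ at a wall then propagates $Q\ge 0$ from the factors back to $E$. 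This reduces the theorem to proving $Q_{0,0}(E)=4\bigl(H\ch_2(E)\bigr)^2-6\,H^2\ch_1(E)\,\ch_3(E)\ge 0$ for every Brill--Noether stable $E$.

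For the base case I would use Proposition \ref{prop:main1}. When $H^2\ch_1(E)=0$ the inequality is immediate, so assume $H^2\ch_1(E)>0$; after the normalization above the cohomology sheaves of a Brill--Noether stable $E$ are, through the short exact sequences defining the tilt, slope semistable and obey the piecewise bound of Proposition \ref{prop:main1} — strictly stronger than the classical Bogomolov bound away from integer slopes, which is exactly the strengthening required, since $Q_{0,0}\ge 0$ fails under the classical inequality alone. I would then convert this control of $\ch_2$ into the needed upper bound on $\ch_3(E)$ using Hirzebruch--Riemann--Roch on the Calabi--Yau $X_{2,4}$ (where $c_1(X)=0$, so $\chi(\OO_X,E)=\ch_3(E)+\tfrac1{12}c_2(X)\ch_1(E)$) together with the cohomology vanishings forced by Brill--Noether stability and Serre duality, yielding $6\,H^2\ch_1(E)\,\ch_3(E)\le 4\bigl(H\ch_2(E)\bigr)^2$.

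I expect the real obstacle to lie not in this reduction but in supplying Proposition \ref{prop:main1}/Theorem \ref{thm:main5}, whose proof rests on the Clifford-type inequality (Theorem \ref{thm:main4}) for the curve $C=X_{2,2,2,4}$. The hard part is the curve estimate: bounding $\hom(\OO_S,E)$, or $\ext^1(\OO_S,E)$ in the positive-slope range, after embedding $C$ into a Picard-rank-one K3 surface $S$, where — unlike the del Pezzo situations of \cite{li2019stability,koseki2020stability} — the needed $\Hom$-vanishings are unavailable and one must instead locate the first wall via Bertram's nested wall theorem and bound the Brill--Noether Harder--Narasimhan polygon by the triangle it cuts out. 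Within the reduction itself the only delicate point is the wall-crossing bookkeeping that keeps the extremal object tilt stable down to the apex and faithfully tracks the excluded lens responsible for the restriction on $(\alpha,\beta)$.
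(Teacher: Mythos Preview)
Your outline is correct and follows the paper's route: reduce via \cite[Proposition 3.1]{li2019stability} to showing $Q_{0,0}(E)\ge 0$ for every Brill--Noether stable $E$ with $\nu_{BN}(E)\in[-\tfrac12,\tfrac12]$, and then deduce that base case from Theorem \ref{thm:main5} together with Hirzebruch--Riemann--Roch and the vanishings $\Hom(\OO_X,E[i])=0$ for $i\ge 2$ and $i\le -1$ coming from Serre duality and tilt stability.

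The one mechanism you leave implicit, and slightly misdescribe, is how the $\ch_2$-bound becomes a bound on $\hom(\OO_X,E)$. The cohomology sheaves of a Brill--Noether stable $E$ are \emph{not} in general slope semistable, and Theorem \ref{thm:main5} is not applied to them. Instead the paper forms the spherical twist $\tilde E=\mathrm{Cone}\bigl(\Hom(\OO_X,E)\otimes\OO_X\xrightarrow{\mathrm{ev}}E\bigr)$, which is again Brill--Noether semistable by Lemma \ref{lem:twist}; since $\rk(\tilde E)=\rk(E)-\hom(\OO_X,E)$ while $H^2\ch_1$ and $H\ch_2$ are unchanged, the piecewise constraints of Theorem \ref{thm:main5} on $\bigl(\tfrac{H^2\ch_1(\tilde E)}{H^3\rk(\tilde E)},\tfrac{H\ch_2(\tilde E)}{H^3\rk(\tilde E)}\bigr)$ translate into an upper bound for $\hom(\OO_X,E)$ in terms of $\rk(E),H^2\ch_1(E),H\ch_2(E)$. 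Feeding this into $\chi(\OO_X,E)=\ch_3(E)+\tfrac{7}{12}H^2\ch_1(E)\le \hom(\OO_X,E)$ and running a case analysis on $\tfrac{H^2\ch_1(E)}{H^3\rk(E)}$ then yields $Q_{0,0}(E)\ge 0$; the $\nu_{BN}(E)<0$ and $\nu_{BN}(E)=0$ cases are handled by derived dual and a separate limit argument, as you anticipate.
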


Under the framework of \cite{bayer2011bridgeland,bayer2011bridgeland1,bayer2016space}, we can construct a family of Bridgeland stability conditions on $X_{2,4}$. The heart $\mathcal{A}$ of the stability condition is constructed by ``double-tilting" $\Coh(X_{2,4})$. We define the double tilting heart $\mathcal{A}^{\alpha,\beta,H}$ to be the extension $\langle\mathcal{T}_{\alpha,\beta,H},\mathcal{F}_{\alpha,\beta,H}[1]\rangle$, where
$$\mathcal{T}_{\alpha,\beta,H}=\left\{E\in \mathrm{Coh}^{\beta,H}(X)\middle|\nu_{\alpha,\beta,H}^{-}(E)>0\right\}\;\;\text{and}\;\;\mathcal{F}_{\alpha,\beta,H}=\left\{E\in \mathrm{Coh}^{\beta,H}(X)\middle|\nu_{\alpha,\beta,H}^{+}(E)\leq0\right\}.$$

We define the central charge $Z$ on $\mathcal{A}^{\alpha,\beta,H}$ to be
$$Z^{a,b}_{\alpha,\beta,H}(E):=\left(-\ch_3^{\beta H}(E)+bH\ch_2^{\beta H}(E)+aH^2\ch_1^{\beta H}(E)\right)+i\left(H\ch_2^{\beta H}(E)-\frac{\alpha^2}{2}H^3\rk(E)\right).$$

By Theorem \ref{thm:main2} and the framework in \cite{bayer2014mmp,bayer2011bridgeland1,bayer2016space}, see \cite[Theorem 8.6, Proposition 8.10]{bayer2016space}, we have
\begin{theorem}
There is a continuous family $\sigma^{a,b}_{\alpha,\beta,H}=\left(Z_{\alpha,\beta,H}^{a,b}(X_{2,4}),\mathcal{A}^{\alpha,\beta,H}(X_{2,4})\right)$ of stability conditions on $X_{2,4}$, parameterized by $(\alpha,\beta,a,b)\in\mathbb{R}_{>0}\times\mathbb{R}\times \mathbb{R}_{>0}\times\mathbb{R}$ satisfying
$$\alpha^2+\left(\beta-\lfloor\beta\rfloor-\frac{1}{2}\right)^2>\frac{1}{4}\;\text{ and }\;a>\frac{\alpha^2}{6}+\frac{1}{2}|b|\alpha.$$
\end{theorem}

\subsection{Useful Lemmas}

Now let 
$$\overline{v}_H(E)=\left(H^n\ch_0(E),H^{n-1}\ch_1(E),H^{n-2}\ch_2(E)\right)$$
$$\text{and }\;p_H(E)=\left(\frac{H^{n-2}\ch_2(E)}{H^n\ch_0(E)},\frac{H^{n-1}\ch_1(E)}{H^n\ch_0(E)}\right).$$ 
Then we have the following lemma on stability conditions:

\begin{lemma}\label{lem:wall}
Let $E$ be a $\nu_{\alpha_0,\beta_0,H}$-tilt stable object in $\Coh^{\beta_0,H}(X)$ for some $\alpha_0>\frac{1}{2}\beta_0$. Then
\begin{enumerate}
    \item {\cite[Corollary 3.3.3]{bayer2011bridgeland}}{\cite[Appendix B]{bayer2016space}} There is an open neighbourhood $U$ of $(\alpha_0,\beta_0)$ such that $E$ is $\nu_{\alpha,\beta,H}$-tilt stable for all $(\alpha,\beta)\in U$ with $\alpha>\frac{1}{2}\beta^2$.

\item {\cite[Theorem 3.1]{macri2014generalized}}{\cite[Lemma 4.3]{bayer2016space}}{\cite[Lemma 2.9]{li2019stability}}(Bertram's Nested Wall Theorem) The object $E$ is $\nu_{\alpha,\beta,H}$-tilt stable for any $(\alpha,\beta)$ with $\alpha>\frac{1}{2}\beta^2$ on the line through $(\alpha_0,\beta_0)$ and $p_H(E)$. More precisely, the object $E$ is $\nu_{\alpha,\beta,H}$-tilt stable for $(\alpha,\beta)$ such that
$$\det
\begin{pmatrix}
1 & \alpha & \beta\\
1 & \alpha_0 & \beta_0\\
H^n\ch_0(E) & H^{n-2}\ch_2(E) & H^{n-1}\ch_1(E)
\end{pmatrix}=0$$
The statement also holds for the semistable case. Moreover, when $X$ is a threefold,
$$H^{n-1}\ch_1^{\beta H}(E)Q_{\alpha_0,\beta_0}(E)=H^{n-1}\ch_1^{\beta_0 H}(E)Q_{\alpha,\beta}(E).$$

\item[(b')] Let $F$ be an object in $\Coh^{\beta,H}(X)$ such that $p_H(F)$ is on the line through the points $(\alpha_0,\beta_0)$ and $p_H(E)$, then $\nu_{\alpha_0,\beta_0,H}(E)=\nu_{\alpha_0,\beta_0,H}(F)$. More precisely, the requirements on $E$ and $F$ are as follows: Both $\overline{v}_H(E)$ and $\overline{v}_H(F)$ are not zero and  
$$\det
\begin{pmatrix}
1 & \alpha_0 & \beta_0\\
H^n\ch_0(E) & H^{n-2}\ch_2(E) & H^{n-1}\ch_1(E)\\
H^n\ch_0(F) & H^{n-2}\ch_2(F) & H^{n-1}\ch_1(F)
\end{pmatrix}=0$$

\item {\cite[Corollary 3.3.3]{bayer2011bridgeland}}{\cite[Appendix B]{bayer2016space}} (Destabilising walls) The set
$$\left\{(\alpha,\beta)\middle|\alpha>\frac{1}{2}\beta^2\text{ and }E\text{ is strictly }\nu_{\alpha,\beta,H}\text{-semistable}\right\}$$
is either empty or a union of lines and rays.
\end{enumerate}
\end{lemma}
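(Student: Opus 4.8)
The statements (a), the stability assertion in (b), and (c) are exactly the results cited from \cite{bayer2011bridgeland, macri2014generalized, bayer2016space}: (a) is Bridgeland's deformation/local-finiteness of walls, and (c) is the resulting wall-and-chamber description of the strictly-semistable locus. So the plan is to invoke these after recording the geometric picture that makes them transparent, and to prove directly the two points that are genuinely self-contained here: the collinearity reformulation and (b'), which are linear algebra, and the ``moreover'' identity, which is the only real computation. Throughout I would set $r(G)=H^n\ch_0(G)$, $d(G)=H^{n-1}\ch_1(G)$, $c(G)=H^{n-2}\ch_2(G)$, so that $\overline{v}_H(G)=(r(G),d(G),c(G))$, $p_H(G)=(c(G)/r(G),\,d(G)/r(G))$, and $\nu_{\alpha,\beta,H}(G)=\tfrac{c(G)-\alpha r(G)}{d(G)-\beta r(G)}$.

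First I would prove the determinant reformulations. Expanding the determinant in (b') along its first row gives the trilinear expression
\[
\bigl(c(E)d(F)-c(F)d(E)\bigr)-\alpha_0\bigl(r(E)d(F)-r(F)d(E)\bigr)+\beta_0\bigl(r(E)c(F)-r(F)c(E)\bigr),
\]
and cross-multiplying the equation $\nu_{\alpha_0,\beta_0,H}(E)=\nu_{\alpha_0,\beta_0,H}(F)$ (legitimate since $\overline{v}_H(E),\overline{v}_H(F)\neq0$ and the slopes are finite) yields the identical expression; hence the determinant vanishes iff the slopes agree, which is (b'). Dividing the last row of the determinant in (b) by $r(E)$ turns it into the classical collinearity determinant of the three planar points $(\alpha,\beta)$, $(\alpha_0,\beta_0)$, $p_H(E)$, so it vanishes exactly when $(\alpha,\beta)$ lies on the line through $(\alpha_0,\beta_0)$ and $p_H(E)$. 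The decisive consequence is that every numerical wall of $E$ is a line through the single point $p_H(E)$: setting $(\alpha,\beta)=p_H(E)$ makes the first row of the wall determinant proportional to $(r(E),c(E),d(E))$.

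From this the nested-wall content of (b) is geometric. The walls of $E$ form a pencil of lines through $p_H(E)$, and by the Bogomolov inequality $\overline{\Delta}_H(E)\geq0$ the point $p_H(E)$ satisfies $c(E)/r(E)\leq\tfrac12\bigl(d(E)/r(E)\bigr)^2$, i.e. it lies on or below the parabola $\alpha=\tfrac12\beta^2$ and so outside the admissible region $\{\alpha>\tfrac12\beta^2\}$. Two distinct lines of the pencil meet only at $p_H(E)$; therefore the line through $(\alpha_0,\beta_0)$ and $p_H(E)$ meets no other wall inside $\{\alpha>\tfrac12\beta^2\}$. If $(\alpha_0,\beta_0)$ lies on no wall, then neither does any point of that segment, and combined with the openness in (a) the object $E$ stays $\nu_{\alpha,\beta,H}$-(semi)stable along it; the strictly-semistable locus in (c) is then precisely the locally finite union of these wall-lines and rays.

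It remains to verify the threefold identity $H^{n-1}\ch_1^{\beta_0 H}(E)\,Q_{\alpha,\beta}(E)=H^{n-1}\ch_1^{\beta H}(E)\,Q_{\alpha_0,\beta_0}(E)$, equivalently that $Q_{\alpha,\beta}(E)/\bigl(H^{n-1}\ch_1^{\beta H}(E)\bigr)$ is constant along the wall-line. I would parametrize the line by $(\alpha,\beta)=(1-t)(\alpha_0,\beta_0)+t\,p_H(E)$, substitute $H^2\ch_1^{\beta H}=d-\beta r$, $H\ch_2^{\beta H}=c-\beta d+\tfrac{\beta^2}{2}r$, and $\ch_3^{\beta H}=\ch_3-\beta c+\tfrac{\beta^2}{2}d-\tfrac{\beta^3}{6}r$ into $Q_{\alpha,\beta}(E)=(2\alpha-\beta^2)\overline{\Delta}_H(E)+4\bigl(H\ch_2^{\beta H}\bigr)^2-6\bigl(H^2\ch_1^{\beta H}\bigr)\ch_3^{\beta H}$, and check that the result factors as $(d-\beta r)$ times a quantity constant in $t$. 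This is the single genuinely computational step, and the main obstacle is the polynomial bookkeeping: the term $(2\alpha-\beta^2)\overline{\Delta}_H(E)$ must be rewritten along the line so that its part linear in $\alpha$ recombines with the quadratic contributions of $\bigl(H\ch_2^{\beta H}\bigr)^2$ and $\bigl(H^2\ch_1^{\beta H}\bigr)\ch_3^{\beta H}$. Since $\overline{\Delta}_H(E)$ is twist-invariant and collinearity forces $2\alpha-\beta^2$ to vary affinely in $\beta$ on the line, the cancellation is manageable, recovering \cite[Lemma 2.9]{li2019stability} and \cite[Lemma 4.3]{bayer2016space}.
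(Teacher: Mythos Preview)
The paper gives no proof of this lemma at all: it is stated as a collection of cited results, with each item attributed to \cite{bayer2011bridgeland}, \cite{bayer2016space}, \cite{macri2014generalized}, or \cite{li2019stability}, and the exposition immediately moves on to the next lemma. So there is nothing to compare against on the paper's side beyond the bare invocation of references.

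Your proposal is therefore strictly more than what the paper does. You correctly identify which pieces are pure citation (the openness in (a), the wall-and-chamber decomposition in (c), the nested-wall statement in (b)) and which are elementary linear algebra worth spelling out (the determinant criteria in (b) and (b')). Your expansion of the determinant in (b') and the collinearity interpretation in (b) are correct. Your geometric explanation of why walls are nested---lines through $p_H(E)$, which lies outside $\{\alpha>\tfrac12\beta^2\}$ by Bogomolov---is the standard picture. For the ``moreover'' identity in (b), you correctly recognize it as the computational step from \cite[Lemma 4.3]{bayer2016space} and \cite[Lemma 2.9]{li2019stability}; your plan to parametrize the line and check that $Q_{\alpha,\beta}(E)/(d-\beta r)$ is constant is exactly how those references proceed, though you stop short of carrying it out. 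That is fine as a proof sketch, but be aware that the paper itself does not even gesture at this computation---it simply imports the result.
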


The following lemma is essential to the proof in the following sections.

\def\dbar{\overline{\Delta}_H}

\begin{lemma}[{\cite[Corollary 3.10]{bayer2016space}}]\label{lem:dis}
Let $E$ be a strictly $\nu_{\alpha,\beta,H}$-tilt semistable object with finite slope. Then, for any of the Jordan--H\"older factors $E_i$ of $E$, we have
$$\overline{\Delta}_H(E_i)\leq\overline{\Delta}_H(E).$$
The equality only holds when $\overline{\nu}_H(E_i)=\overline{\nu}_H(E)$ and $\overline{\Delta}_H(E)=\overline{\Delta}_H(E_i)=0$.
\end{lemma}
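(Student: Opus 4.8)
The plan is to reduce the statement to a Hodge--index inequality for the quadratic form $\overline{\Delta}_H$ on the three-dimensional space of reduced Chern characters $\overline{v}_H(-)=\bigl(H^n\ch_0,\,H^{n-1}\ch_1,\,H^{n-2}\ch_2\bigr)\in\R^3$. First I would record the structural facts about the Jordan--H\"older factors. Since $E$ is strictly $\nu_{\alpha,\beta,H}$-semistable of finite slope $\nu:=\nu_{\alpha,\beta,H}(E)$, every factor $E_i$ is $\nu_{\alpha,\beta,H}$-stable with $\nu_{\alpha,\beta,H}(E_i)=\nu$, each satisfies $\Im Z_{\alpha,\beta,H}(E_i)=H^{n-1}\ch_1^{\beta H}(E_i)>0$, and in $K_0$ one has $\overline{v}_H(E)=\sum_i\overline{v}_H(E_i)$. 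Thus all the vectors $\overline{v}_H(E_i)$ and $\overline{v}_H(E)$ lie in the single plane $P\subset\R^3$ of vectors of slope $\nu$, and on the same side $\{\Im Z_{\alpha,\beta,H}>0\}$.

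Next I would analyse the form on $P$. Writing $\overline{v}_H=(r,c,d)$ we have $\overline{\Delta}_H=c^2-2rd$, a form of signature $(2,1)$ whose associated symmetric bilinear form is $B\bigl((r,c,d),(r',c',d')\bigr)=cc'-rd'-r'd$. The plane $P$ contains $\ker Z_{\alpha,\beta,H}=\langle v_0\rangle$ with $v_0=(1,\beta,\alpha)$, and $\overline{\Delta}_H(v_0)=\beta^2-2\alpha<0$ precisely because of the standing hypothesis $\alpha>\tfrac12\beta^2$. On the other hand the Bogomolov inequality (the theorem preceding this lemma) gives $\overline{\Delta}_H(E_i)\ge0$, and these classes are linearly independent from $v_0$ since $\Im Z_{\alpha,\beta,H}(E_i)>0$ while $\Im Z_{\alpha,\beta,H}(v_0)=0$. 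Hence $\overline{\Delta}_H|_P$ is a nondegenerate Lorentzian form of signature $(1,1)$, and every semistable class of slope $\nu$ lies in one fixed component of its positive cone.

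The key analytic input is the reverse Cauchy--Schwarz inequality for a Lorentzian form: for $u,w\in P$ lying in the same component of the cone $\{\overline{\Delta}_H\ge0\}$ one has
\[
B(u,w)\ \ge\ \sqrt{\overline{\Delta}_H(u)\,\overline{\Delta}_H(w)}\ \ge\ 0 ,
\]
with equality only when $u$ and $w$ are proportional. A short induction shows that every partial sum $v_{\mathrm{rest}}:=\sum_{j\neq i}\overline{v}_H(E_j)$ again lies in this component with $\overline{\Delta}_H(v_{\mathrm{rest}})\ge0$. Writing $\overline{v}_H(E)=\overline{v}_H(E_i)+v_{\mathrm{rest}}$ and expanding via $B$,
\[
\overline{\Delta}_H(E)=\overline{\Delta}_H(E_i)+\overline{\Delta}_H(v_{\mathrm{rest}})+2B\bigl(\overline{v}_H(E_i),v_{\mathrm{rest}}\bigr)\ \ge\ \overline{\Delta}_H(E_i),
\]
which is the desired inequality.

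Finally, for the equality clause I would trace back through the two inequalities above: equality forces $\overline{\Delta}_H(v_{\mathrm{rest}})=0$ and $B\bigl(\overline{v}_H(E_i),v_{\mathrm{rest}}\bigr)=0$. Since a strictly positive (timelike) vector pairs strictly positively with any nonzero cone vector, this is impossible unless $\overline{\Delta}_H(E_i)=0$; and then the vanishing of $B$ between two null vectors of a $(1,1)$-form forces $\overline{v}_H(E_i)\parallel\overline{v}_H(E)$, i.e. $\overline{\nu}_H(E_i)=\overline{\nu}_H(E)$ together with $\overline{\Delta}_H(E)=\overline{\Delta}_H(E_i)=0$. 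The step deserving the most care, and the main obstacle, is the signature claim for $\overline{\Delta}_H|_P$: the entire monotonicity rests on the form being genuinely hyperbolic with all factors in a single component of the positive cone, and it is exactly the hypothesis $\alpha>\tfrac12\beta^2$ (which makes $\overline{\Delta}_H(v_0)<0$) together with the Bogomolov inequality (which supplies a positive direction) that secures this.
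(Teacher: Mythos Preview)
Your argument is correct and is precisely the standard proof of this fact. Note, however, that the paper does not supply its own proof: the lemma is quoted verbatim from \cite[Corollary~3.10]{bayer2016space} and left without argument. What you have written is essentially the Bayer--Macr\`i--Stellari proof, namely: restrict the signature-$(2,1)$ form $\overline{\Delta}_H$ to the plane $P$ of classes with fixed phase, observe that $P$ contains the negative vector $(1,\beta,\alpha)$ (by $\alpha>\tfrac12\beta^2$) and the nonnegative vectors $\overline{v}_H(E_i)$ (by Bogomolov), hence has signature $(1,1)$, and then apply the reverse Cauchy--Schwarz inequality in the hyperbolic plane. Your treatment of the equality case is also the standard one; the symbol $\overline{\nu}_H$ in the statement should be read as proportionality of the reduced characters $\overline{v}_H$, which is exactly what you derive. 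The only place worth tightening in exposition is the claim that all factors lie in a \emph{single} component of the positive cone: you use that $\Im Z_{\alpha,\beta,H}$ is a linear functional on $P$ vanishing exactly on the negative line $\R v_0$, so the half-space $\{\Im Z>0\}$ meets only one component of $\{\overline{\Delta}_H\ge 0\}$. That is implicit in what you wrote but could be said in one sentence.
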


One more (weak) stability condition we use is the Brill--Noether stability condition. A detailed explanation can be found in \cite{bayer2015wall}.

\begin{definition}
An object $E$ is called Brill--Noether stable if there is an open subset
$$U_{\delta}=\{(\alpha,\beta)|\alpha^2+\beta^2<\delta\text{ and }\alpha>\frac{1}{2}\beta^2\},$$
such that $E$ is $\nu_{\alpha,\beta,H}$-tilt stable for all $(\alpha,\beta)\in U_{\delta}$. 

We call an object $E$ Brill--Noether semistable if there exists $\delta>0$ such that $E$ is $\nu_{\alpha,0,H}$-semistable for every $0<\alpha<\delta$.

The Brill--Noether slope is defined by $\nu_{BN}(E)=\frac{H^{n-2}\ch_2(E)}{H^{n-1}\ch_1(E)}$ if $H^{n-1}\ch_1(E)\neq0$, and $+\infty$ otherwise.
\end{definition}

By Lemma \ref{lem:wall}, an object $E$ with $H^{n-2}\ch_2(E)\neq0$ is Brill--Noether stable if and only if it is $\nu_{\alpha,\beta,H}$-tilt stable for some $(\alpha,\beta)$ proportional to $p_H(E)$, and a similar result holds for the Brill--Noether semistable case.

The following well-known lemma will be used, and the proof can be found in \cite[lemma 2.12]{li2019stability}.

\begin{lemma}[{\cite[Lemma 6.5]{bayer2015wall}}]\label{lem:twist}
Assume that $E\in\Coh^{0,H}(X)$ is Brill--Noether stable. If $\nu_{BN}(E)>0$, let $W\subset \Hom(O_X,E)$ be a vector subspace and let
$$\Tilde{E}=\mathrm{Cone}(W\otimes O_X\xrightarrow{\mathop{ev}} E),$$
where the map is the evaluation map. Then the spherical twist $\Tilde{E}$ is also Brill--Noether semistable and $\Tilde{E}\in \mathrm{Coh}^{0,H}(X)$.

If $\nu_{BN}(E)<0$, let $W'\subset \Hom(E[-1],O_X)$ be a vector subspace and let
$$\Tilde{E}'=\mathrm{Cone}(E[-1]\xrightarrow{\mathop{can}} W'\otimes O_X),$$
where the map is the canonical map. Then the spherical twist $\Tilde{E}'$ is also Brill--Noether semistable and $\Tilde{E}'\in \mathrm{Coh}^{0,H}(X)$.
\end{lemma}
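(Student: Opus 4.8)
The plan is to identify $\tilde E$ (resp.\ $\tilde E'$) with the cone defining a (partial) spherical twist along $O_X$ and to propagate (semi)stability from $E$ and $O_X$ to it. The structural input is that $X=X_{2,4}$ is Calabi--Yau, so $O_X$ is a spherical object: $\Ext^0(O_X,O_X)=\Ext^3(O_X,O_X)=\C$ and $\Ext^1(O_X,O_X)=\Ext^2(O_X,O_X)=0$. Being a line bundle, $O_X$ is slope stable, hence $\nu_{\alpha,\beta,H}$-tilt stable for all $(\alpha,\beta)$ near the origin by Lemma \ref{lem:wall}, and its class sits on the boundary ray $H^{n-1}\ch_1^{\beta H}(O_X)=0$, making it an extremal object of the heart near the origin.

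First I would treat $\nu_{BN}(E)>0$. For $(\alpha,\beta)$ in the Brill--Noether chamber near the origin (with $\beta$ slightly negative) both $O_X$ and $E$ lie in $\Coh^{\beta,H}(X)$ and are tilt stable. Applying $\Hom(O_X,-)$ to $W\otimes O_X\xrightarrow{\ev}E\to\tilde E$ and using $\Ext^1(O_X,O_X)=0$ gives $\Hom(O_X,\tilde E)\cong\Hom(O_X,E)/W$, which vanishes when $W$ is the full space of sections; this Hom--vanishing is the key qualitative consequence of the construction. I would then check, using stability of $E$ and extremality of $O_X$, that $\ev$ is injective in the heart, so that $\tilde E=\coker(\ev)$ lies in $\Coh^{\beta,H}(X)$ and fits in a short exact sequence $0\to W\otimes O_X\to E\to\tilde E\to0$; letting $\beta\to0^-$ also yields $E\in\Coh^{0,H}(X)$.

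The core step is semistability of $\tilde E$. Given a maximal $\nu_{\alpha,0,H}$-destabilizing subobject $S\subsetneq\tilde E$, I would take its preimage $E'\subset E$, producing $0\to W\otimes O_X\to E'\to S\to0$, and compare slopes using $\ch_{\le1}(O_X)=(1,0)$. Passing to the limit $\alpha\to0$, the case $\nu_{BN}(S)>\nu_{BN}(E)$ forces $\nu_{\alpha,0,H}(E')>\nu_{\alpha,0,H}(E)$ for small $\alpha$, contradicting stability of $E$. I expect the main obstacle to be the boundary case $\nu_{BN}(S)=\nu_{BN}(E)$, in which the numerical slope inequalities are consistent and no contradiction is visible from the Chern characters alone; here I would use the inherited vanishing $\Hom(O_X,S)=0$ together with the discriminant inequality of Lemma \ref{lem:dis} applied to the Jordan--H\"older factors on the wall to force $S$ to share the reduced class of $\tilde E$, thereby excluding a strict destabilizer and establishing semistability.

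Finally, the case $\nu_{BN}(E)<0$ follows by duality. The functor $\mathbb D(-)=\RHom(-,O_X)$ is an anti-equivalence exchanging $\Coh^{\beta,H}(X)$ and $\Coh^{-\beta,H}(X)$ and reversing the sign of $\nu_{BN}$, and it sends the co-evaluation triangle $E[-1]\xrightarrow{\mathrm{can}}W'\otimes O_X\to\tilde E'$ to an evaluation triangle of the type treated above, where $W'\subset\Hom(E[-1],O_X)=\Ext^1(E,O_X)$ is dual to a space of sections of $\mathbb D(E)$. Applying the positive-slope case to $\mathbb D(E)$ and dualizing back gives Brill--Noether semistability of $\tilde E'$ together with $E\in\Coh^{0,H}(X)$, completing the proof.
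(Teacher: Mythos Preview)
The paper does not supply its own proof of this lemma: it is stated as a well-known result, attributed to \cite[Lemma 6.5]{bayer2015wall}, with the remark that ``the proof can be found in \cite[lemma 2.12]{li2019stability}.'' So there is nothing in the paper to compare against; I can only comment on your argument directly.

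Your strategy is the standard one and is essentially correct for the positive-slope case, but you are overcomplicating the ``boundary case.'' Since $O_X$ has $H^{n-1}\ch_1=H^{n-2}\ch_2=0$, one has $\nu_{BN}(\tilde E)=\nu_{BN}(E)$, and the statement only asserts \emph{semi}stability of $\tilde E$: you must rule out a subobject $S\subsetneq\tilde E$ with $\nu_{\alpha,0,H}(S)>\nu_{\alpha,0,H}(\tilde E)$ for all small $\alpha$. Pulling $S$ back to $E'\subsetneq E$ gives $\ch_1(E')=\ch_1(S)$ and $\ch_2(E')=\ch_2(S)$, hence $\nu_{BN}(E')=\nu_{BN}(S)>\nu_{BN}(E)$, which for small $\alpha$ yields $\nu_{\alpha,0,H}(E')>\nu_{\alpha,0,H}(E)$ and contradicts stability of $E$. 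The equality $\nu_{BN}(S)=\nu_{BN}(E)$ that worries you is not a case you need to exclude.

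There is, however, a genuine gap in your treatment of $\nu_{BN}(E)<0$ by duality. On a threefold the derived dual $\mathbb D(E)=E^\vee[1]$ of a Brill--Noether stable object is \emph{not} in general Brill--Noether stable: as the paper itself recalls (Section 3, citing \cite[Proposition 5.1.3(b)]{bayer2011bridgeland}), one only has a triangle $\overline E\to\mathbb D(E)\to T_0[-1]$ with $\overline E$ BN-stable and $T_0$ a zero-dimensional sheaf. Thus you cannot apply the positive-slope case directly to $\mathbb D(E)$; you must either work with $\overline E$ and track the effect of $T_0$ through the dual of the co-evaluation triangle, or run the subobject/quotient argument directly in the negative case (using quotients of $\tilde E'$ rather than subobjects). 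Either route works, but the bald ``dualize and apply the first case'' does not.
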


\section{Proof of the Main Result}

Let $X:=X_{2,4}\subset\mathbb{P}^5$ be a smooth complete intersection of quadratic and quartic hypersurfaces. Thus it is a Calabi--Yau $3$-fold by the adjunction formula. By Hirzebruch--Riemann--Roch, we have
$$\chi(E)=\frac{7}{12}H^2\ch_1(E)+\ch_3(E).$$

By the same argument as that in \cite[Theorem 3.2]{li2019stability}, we get the following theorem and proposition:

\begin{theorem}[{\cite[Theorem 3.2]{li2019stability}},{\cite[Theorem 5.4]{bayer2016space}}]
Theorem \ref{thm:main2} holds if 
$$Q_{0,0}(E):=4(H\ch_2(E))^2-6\left(H^2\ch_1(E)\right)(\ch_3(E))\geq0$$ 
for any Brill--Noether stable object $E\in \mathrm{Coh}^{0,H}(X)$ with $\nu_{BN}(E)\in[-\frac{1}{2},\frac{1}{2}]$.
\end{theorem}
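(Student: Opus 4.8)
The plan is to follow the reduction scheme of \cite[Theorem 3.2]{li2019stability} (compare \cite[Theorem 5.4]{bayer2016space}): I want to collapse the whole family of $\nu_{\alpha,\beta,H}$-tilt semistable objects, for $(\alpha,\beta)$ ranging over the admissible region, onto Brill--Noether stable objects sitting over the origin. The engine is an induction on the discriminant $\overline{\Delta}_H(E)\geq 0$, fed by the fact (Lemma \ref{lem:dis}) that the discriminant strictly drops when passing to Jordan--Hölder factors on a wall, unless everything degenerates to $\overline{\Delta}_H=0$, which forms the base of the induction (there $E$ is, up to twist and shift, essentially a line bundle and $Q_{\alpha,\beta}(E)\geq 0$ can be checked by hand).

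First I would reduce from tilt-semistable to tilt-\emph{stable} objects. If $E$ is strictly $\nu_{\alpha,\beta,H}$-semistable, its Jordan--Hölder factors $E_i$ share the tilt-slope of $E$, so the vectors $\overline{v}_H(E_i)$ are collinear with $\overline{v}_H(E)$, and Lemma \ref{lem:dis} gives $\overline{\Delta}_H(E_i)\leq\overline{\Delta}_H(E)$. Because $Q_{\alpha,\beta}$ restricted to any slope-fixed locus is controlled by $\overline{\Delta}_H$ through the positivity computation of \cite{bayer2016space}, the bounds $Q_{\alpha,\beta}(E_i)\geq 0$ force $Q_{\alpha,\beta}(E)\geq 0$; since the factors have strictly smaller discriminant this both closes the semistable case and justifies assuming $E$ tilt-stable in what follows.

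Next, the family of inequalities is equivariant under $\otimes O_X(H)$, which identifies $\Coh^{\beta,H}(X)$ with $\Coh^{\beta+1,H}(X)$; this periodicity lets me assume $\lfloor\beta\rfloor=0$, and the hypothesis $\alpha>\tfrac12\beta^2+\tfrac12(\beta-\lfloor\beta\rfloor)(\lfloor\beta\rfloor+1-\beta)$ is precisely tailored to this normalization. For a tilt-stable $E$, Lemma \ref{lem:wall}(b) keeps $E$ tilt-stable along the line through $(\alpha,\beta)$ and $p_H(E)$, and the transformation rule $H^2\ch_1^{\beta H}(E)\,Q_{\alpha_0,\beta_0}(E)=H^2\ch_1^{\beta_0 H}(E)\,Q_{\alpha,\beta}(E)$ shows the sign of $Q$ is unchanged as $(\alpha,\beta)$ is slid toward the origin, the scaling factor staying positive while $H^2\ch_1^{\beta H}>0$. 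If $E$ stays stable down to the boundary it becomes Brill--Noether stable with parameters proportional to $p_H(E)$, so by the characterization following Lemma \ref{lem:twist} the sign of $Q_{\alpha,\beta}(E)$ agrees with that of $Q_{0,0}(E)$, which is $\geq 0$ by hypothesis; if $E$ first meets a wall, it breaks into factors of strictly smaller $\overline{\Delta}_H$ and the induction closes. Finally, the spherical-twist and Calabi--Yau duality symmetries (Lemma \ref{lem:twist}, $E\mapsto R\lHom(E,O_X)$) reflect and translate the Brill--Noether slope, confining the base case to the fundamental window $\nu_{BN}(E)\in[-\tfrac12,\tfrac12]$.

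The step I expect to be most delicate is exactly this last normalization: one must verify that every admissible pair $(\alpha,\beta)$ in the stated region, after twisting into $\lfloor\beta\rfloor=0$ and sliding along the Bertram line, genuinely reaches a Brill--Noether stable configuration with $\nu_{BN}\in[-\tfrac12,\tfrac12]$, and that the positive scaling factor of Lemma \ref{lem:wall}(b) never degenerates en route — which is what the quantitative constraint on $(\alpha,\beta)$ is designed to guarantee. By contrast, the inductive positivity of $Q$ under wall-crossing, once Lemma \ref{lem:dis} and the quadratic-form estimate are in hand, is routine, as is the direct verification in the $\overline{\Delta}_H=0$ base case.
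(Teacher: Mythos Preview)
Your sketch follows the scheme of \cite[Theorem~3.2]{li2019stability} and \cite[Theorem~5.4]{bayer2016space} that the paper invokes verbatim, and the essential ingredients---induction on $\overline{\Delta}_H$, reduction from semistable to stable via Lemma~\ref{lem:dis}, the $\otimes\,\cO_X(H)$-periodicity, and the sign invariance of $Q$ along Bertram lines from Lemma~\ref{lem:wall}(b)---are all correctly identified. Since the paper gives no proof of its own and simply cites these references, matching Li's argument is exactly what is required.

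One step is misattributed, however. The confinement of $\nu_{BN}$ to $[-\tfrac12,\tfrac12]$ is \emph{not} achieved by spherical twist or derived duality. The spherical twist of Lemma~\ref{lem:twist} leaves $H^2\ch_1$ and $H\ch_2$ unchanged, hence fixes $\nu_{BN}$; and duality only sends $\nu_{BN}\mapsto -\nu_{BN}$, which cannot bring a large slope into the window. The actual mechanism is the line-bundle twist combined with the shape of the admissible region. The boundary $\alpha=\tfrac12\beta^2+\tfrac12(\beta-\lfloor\beta\rfloor)(\lfloor\beta\rfloor+1-\beta)$ is precisely the union of chords joining the consecutive integer points $p_H(\cO_X(nH))=(\tfrac{n^2}{2},n)$ on the parabola, and the chord between $(0,0)$ and $(\tfrac12,1)$ has slope $\tfrac12$. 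After normalising $\beta_0\in[0,1)$, the point $(\alpha_0,\beta_0)$ lies strictly above this chord; one then deforms toward the vertex $(0,0)$ or (after a further twist by $\cO_X(-H)$) toward $(\tfrac12,1)$, according to which side of slope $\tfrac12$ the relevant line falls. It is this dichotomy---built into the quantitative hypothesis on $(\alpha,\beta)$---that forces the terminal Brill--Noether slope into $[-\tfrac12,\tfrac12]$. You correctly flag this normalisation as the delicate step; replacing the spherical-twist explanation by this chord geometry is what is needed to close it.
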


\begin{proposition}\label{prop:main3}
The inequality $Q_{0,0}(E)\geq0$ holds for any Brill--Noether stable object $E\in \mathrm{Coh}^{0,H}(X)$ with $\nu_{BN}(E)\in[-\frac{1}{2},\frac{1}{2}]$.
\end{proposition}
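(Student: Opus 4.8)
The plan is to translate the inequality $Q_{0,0}(E)\ge 0$ into an upper bound for an Euler characteristic, and then to extract that bound from the strengthened first Bogomolov--Gieseker inequality of Theorem \ref{thm:main5}.

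\textbf{Reduction and reformulation.} If $H^2\ch_1(E)=0$ then $Q_{0,0}(E)=4(H\ch_2(E))^2\ge 0$ and there is nothing to prove, so I may assume $H^2\ch_1(E)>0$. The derived dual of $E$, suitably shifted back into $\Coh^{0,H}(X)$, preserves Brill--Noether stability and the data $H^2\ch_1,\,H\ch_2$, leaves $Q_{0,0}$ unchanged, and sends $\nu_{BN}\mapsto-\nu_{BN}$; hence I may assume $\nu_{BN}(E)\in[0,\tfrac12]$. Using the Riemann--Roch identity $\chi(E)=\tfrac{7}{12}H^2\ch_1(E)+\ch_3(E)$ recorded above, the inequality $Q_{0,0}(E)\ge 0$ is equivalent to
$$\chi(E)\ \le\ \frac{7}{12}H^2\ch_1(E)+\frac{2\,(H\ch_2(E))^2}{3\,H^2\ch_1(E)},$$
so the whole problem reduces to an upper estimate for $\chi(E)=\chi(O_X,E)$.

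\textbf{From $\chi$ to sheaf cohomology.} Since $E\in\Coh^{0,H}(X)$ one has $\hom(E,O_X)=0$, so by Serre duality ($K_X\cong O_X$) the term $\ext^3(O_X,E)$ vanishes and $\ext^2(O_X,E)=\ext^1(E,O_X)^\vee$. To remove the contribution of global sections I would invoke Lemma \ref{lem:twist}: with $W=\Hom(O_X,E)$ the cone $\widetilde E=\Cone(W\otimes O_X\xrightarrow{\mathrm{ev}}E)$ is Brill--Noether semistable, lies in $\Coh^{0,H}(X)$, and has $\hom(O_X,\widetilde E)=0$ (using $H^1(O_X)=0$); moreover $\chi(O_X)=0$ on a Calabi--Yau threefold gives $\chi(\widetilde E)=\chi(E)$. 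Combining these vanishings yields $\chi(E)=\chi(\widetilde E)\le \ext^2(O_X,\widetilde E)=\ext^1(\widetilde E,O_X)^\vee$. Passing to the cohomology-sheaf triangle $\mathcal H^{-1}(\widetilde E)[1]\to\widetilde E\to\mathcal H^0(\widetilde E)$ then bounds $\ext^1(\widetilde E,O_X)$ by a sum of $\Ext$-groups of the slope-semistable Harder--Narasimhan factors of the sheaves $\mathcal H^{0}(\widetilde E)$ and $\mathcal H^{-1}(\widetilde E)$.

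\textbf{Applying Theorem \ref{thm:main5}.} For each such slope-semistable factor $F$, Serre duality and Riemann--Roch express the relevant $\Ext$-group in terms of $\ch_0(F),\,H^2\ch_1(F),\,H\ch_2(F),\,\ch_3(F)$, and the essential input is that Theorem \ref{thm:main5} bounds $H\ch_2(F)$ from above in terms of $H^2\ch_1(F)$ and $\ch_0(F)$ strictly better than the classical Bogomolov inequality. Substituting these bounds back produces an upper estimate for $\ext^2(O_X,\widetilde E)$, hence for $\chi(E)$, of exactly the shape demanded in the first step.

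The step I expect to be the main obstacle is this last one: one must verify that the piecewise bound of Theorem \ref{thm:main5} is sharp enough to produce precisely the coefficient $\tfrac{2}{3}(H\ch_2(E))^2/H^2\ch_1(E)$ uniformly over $\nu_{BN}(E)\in[0,\tfrac12]$ (the classical inequality is visibly too weak here), and one must control the interaction of the several Harder--Narasimhan factors, using Lemma \ref{lem:dis} to bound their discriminants. The boundary value $\nu_{BN}(E)=0$ and the possibility $\ch_0(\widetilde E)\le 0$ will require separate but routine treatment.
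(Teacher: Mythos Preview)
Your overall architecture is right: reduce to $\nu_{BN}(E)\in(0,\tfrac12]$ via the derived dual, translate $Q_{0,0}\ge 0$ into an upper bound on $\chi(E)$ using Riemann--Roch, pass to the spherical twist $\widetilde E$, and feed Theorem \ref{thm:main5} into the argument. But the mechanism by which Theorem \ref{thm:main5} actually enters is not the one you describe, and the route you propose does not close.

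\textbf{Where the gap is.} You plan to bound $\ext^2(O_X,\widetilde E)=\ext^1(\widetilde E,O_X)$ by breaking $\widetilde E$ into cohomology sheaves and then into slope-HN factors $F$, and to control the relevant $\Ext$-group of each $F$ via Riemann--Roch together with Theorem \ref{thm:main5}. This is circular. Riemann--Roch expresses the alternating sum of $\Ext^i(O_X,F)$ in terms of $\ch_3(F)$; to isolate a single $\Ext$-group you need vanishings or an a priori bound on $\ch_3(F)$, and Theorem \ref{thm:main5} only bounds $\ch_2$. A bound on $\ch_3$ of tilt-semistable objects is exactly the content of $Q_{0,0}\ge 0$, the thing you are proving. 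There is no way to extract an $\Ext$-group bound for the factors from the $\ch_2$-inequality alone.

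\textbf{What the paper does instead.} The paper never decomposes $\widetilde E$ into slope-HN factors. First, it uses Brill--Noether \emph{stability} of $E$ (not of $\widetilde E$) together with Serre duality to get the vanishing $\Hom(E,O_X[1])=0$, i.e.\ $\ext^2(O_X,E)=0$; this yields $\chi(E)\le \hom(O_X,E)$ directly. Second, it applies Theorem \ref{thm:main5} to the Brill--Noether semistable object $\widetilde E$ \emph{itself}: since $\ch_1(\widetilde E)=\ch_1(E)$ and $\ch_2(\widetilde E)=\ch_2(E)$ are fixed while $\rk(\widetilde E)=\rk(E)-\hom(O_X,E)$, the constraint that the point $p_H(\widetilde E)$ lie below the curve of Theorem \ref{thm:main5} is a bound on $\rk(\widetilde E)$ and hence on $\hom(O_X,E)$. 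Concretely one reads off inequalities such as $\hom(O_X,E)\le \rk(E)+\tfrac{7}{24}H^2\ch_1(E)+\tfrac{2}{3}H\ch_2(E)$, and plugging these into $\chi(E)\le \hom(O_X,E)$ together with HRR gives $Q_{0,0}(E)\ge 0$ after a case analysis in the value of $\tfrac{H^2\ch_1(E)}{H^3\rk(E)}$. The case $\nu_{BN}(E)=0$ genuinely needs a separate limiting argument (where one also bounds $\hom(O_X,E[2])$ by applying the same trick to the derived dual), not just a ``routine'' tweak. So keep your spherical twist, but use it to control $\hom(O_X,E)$ through the \emph{rank} of $\widetilde E$, not through $\Ext$-groups of its HN pieces.
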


The proof of this proposition depends on the positivity of the slope $\nu_{BN}(E)$. First we consider $\nu_{BN}(E)\in(0,\frac{1}{2}]$.

\begin{lemma}\label{lemma:hombound}
Let $E\in \mathrm{Coh}^{0,H}(X)$ be a Brill--Noether stable object with $\nu_{BN}(E)\in(0,\frac{1}{2}]$. Then we have
$$Q_{0,0}(E)\geq 4(H\ch_2(E))^2-6(H^2\ch_1(E))\rk(E)+\frac{7}{4}(H^2\ch_1(E))^2-4(H\ch_2(E))(H^2\ch_1(E)).$$
\end{lemma}

\begin{proof}
Let $E\in \mathrm{Coh}^{0,H}(X)$ be a Brill--Noether stable object with $\nu_{BN}(E)\in(0,\frac{1}{2}]$. There exists $(\alpha,\beta)$ such that $\alpha>\frac{1}{2}\beta^2$, $0<\frac{\alpha}{\beta}<\nu_{BN}(E)$ and $E$ is $\nu_{\alpha,\beta,H}$-tilt stable. Note that 
$$\nu_{\alpha,\beta,H}(O_X[1])=\frac{\alpha}{\beta}<\nu_{\alpha,\beta,H}(E).$$

By tilt stability and Serre duality, we have
\begin{equation}\label{eq1}
    \Hom(O_X,E[2+i])=(\Hom(E,O_X[1-i]))^*=0
\end{equation}
for $i\geq0$. Consider $\Tilde{E}:=\mathrm{Cone}(O_X\otimes \Hom(O_X,E)\rightarrow E)$. By Lemma \ref{lem:twist}, $\Tilde{E}$ is also Brill--Noether semistable. By Theorem \ref{thm:main5}, the slope $\frac{H^2\ch_1(\tilde{E})}{H^3\rk(\tilde{E})}$ cannot be in $(-\frac{1}{5},0]$. Then either
$$\frac{H^2\ch_1(E)}{H^3(\mathrm{rk}(E)-\mathrm{hom}(O_X,E))}=\frac{H^2\ch_1(\tilde{E})}{H^3\mathrm{rk}(\tilde{E})}\notin[-\frac{1}{2},-\frac{1}{5}],$$
which implies
$$\mathrm{hom}(O_X,E)<\mathrm{rk}(E)+\frac{1}{4}H^2\ch_1(E),$$
or $$\frac{H^2\ch_1(\tilde{E})}{H^3\mathrm{rk}(\tilde{E})}\in[-\frac{1}{2},-\frac{1}{5}],$$
which implies 
$$\mathrm{hom}(O_X,E)\leq \mathrm{rk}(E)+\frac{7}{24}H^2\ch_1(E)+\frac{2}{3}H\ch_2(E).$$

As we require $E\in \Coh^{0,H}(E)$ and $\nu_{BN}(E)\in(0,\frac{1}{2}]$, we know that $H\ch_2(E)>0$ and $H^2\ch_1(E)>0$ and thus in both cases we always have 
$$\mathrm{hom}(O_X,E)\leq \mathrm{rk}(E)+\frac{7}{24}H^2\ch_1(E)+\frac{2}{3}H\ch_2(E).$$
By slope we have $\Hom(O_X[1],E[j])=0$ for $j\leq -1$, and combining this with \eqref{eq1} we have
$$\chi(O_X,E)\leq \hom(O_X,E).$$
By Hirzebruch--Riemann--Roch, we have
$$\frac{7}{12}H^2\ch_1(E)+\ch_3(E)\leq \rk(E)+\frac{7}{24}H^2\ch_1(E)+\frac{2}{3}H\ch_2(E).$$
By multiplying with $6H^2\ch_1(E)$ and rearranging terms, we have
$$Q_{0,0}(E)\geq 4(H\ch_2(E))^2-6(H^2\ch_1(E))\rk(E)+\frac{7}{4}(H^2\ch_1(E))^2-4(H\ch_2(E))(H^2\ch_1(E)).$$

\end{proof}

\begin{lemma}\label{lemma:pos}
For a Brill--Noether stable object $E\in \Coh^{0,H}(X)$ with $\nu_{BN}(E)\in(0,\frac{1}{2}]$, we have $Q_{0,0}(E)\geq0$.
\end{lemma}

\begin{proof}
Let $E\in \mathrm{Coh}^{0,H}(X)$ be a Brill--Noether stable object with $\nu_{BN}(E)\in(0,\frac{1}{2}]$. By Lemma \ref{lemma:hombound}, we have the following inequality on $Q_{0,0}$:

\begin{align}
    Q_{0,0}(E) & \geq 4(H\ch_2(E))^2-6(H^2\ch_1(E))\rk(E)+\frac{7}{4}(H^2\ch_1(E))^2-4(H\ch_2(E))(H^2\ch_1(E)) \notag \\
    &  = 4\left((H\ch_2(E))-\frac{1}{2}(H^2\ch_1(E))\right)^2+\frac{3}{4}\left(H^2\ch_1(E)\right)^2-\frac{3}{4}(H^2\ch_1(E))(H^3\rk(E)).\label{eq2}
\end{align}
As we assume $\nu_{BN}(E)\in(0,\frac{1}{2}]$, we have $\frac{H^2\ch_1(E)}{H^3\rk(E)}\notin[0,\frac{3}{7}]$ by Theorem \ref{thm:main5}. 

\textbf{Case 1} When $\frac{H^2\ch_1(E)}{H^3\rk(E)}\notin[\frac{3}{7},1]$, then we have $H^2\ch_1(E)>H^3\rk(E)$ and thus by \eqref{eq2}, we have $Q_{0,0}(E)\geq0$.

\textbf{Case 2} When $\frac{H^2\ch_1(E)}{H^3\rk(E)}\in[\frac{4}{5},\frac{10}{11}]$, by Theorem \ref{thm:main5}, we have 
$$-\rk(E)\geq \frac{11}{64}H\ch_2(E)-\frac{51}{256}H^2\ch_1(E).$$
Thus
$$Q_{0,0}(E)\geq 4(H\ch_2(E))^2-\frac{95}{32}(H\ch_2(E))(H^2\ch_1(E))+\frac{71}{128}(H^2\ch_1(E))^2\geq0.$$

\textbf{Case 3} When $\frac{H^2\ch_1(E)}{H^3\rk(E)}\in[\frac{1}{2},\frac{4}{5}]$, by Theorem \ref{thm:main5}, we have
$$-6\rk(E)\geq3H\ch_2(E)-\frac{27}{16}H^2\ch_1(E)$$
and thus
$$Q_{0,0}(E)\geq 4\left((H\ch_2(E)-\frac{1}{8}(H^2\ch_1(E))\right)^2\geq0.$$

\textbf{Case 4} When $\frac{H^2\ch_1(E)}{H^3\rk(E)}\in [\frac{3}{7},\frac{1}{2}]$, by Theorem \ref{thm:main5}, we have
$$-6\rk(E)\geq 4H\ch_2(E)-\frac{7}{4}H^2\ch_1(E)$$
and thus we have
$$Q_{0,0}(E)\geq4(H\ch_2(E))^2\geq0.$$

\textbf{Case 5} When $\frac{H^2\ch_1(E)}{H^3\rk(E)}\in[\frac{10}{11},1]$, we need a better bound of $Q_{0,0}$. There are two subcases: 

\textbf{(1)} $\frac{H\ch_2(E)}{H^2\ch_1(E)}\leq\frac{79}{220}$. In this case, we have
\begin{align}
Q_{0,0}(E) & \geq 
 4(H\ch_2(E))^2-4(H\ch_2(E))(H^2\ch_1(E))+\frac{7}{4}(H^2\ch_1(E))^2-\frac{3}{4}(H^2\ch_1(E))(H^3\rk(E))\notag \\
&\geq\frac{2509}{3025}(H\ch_1(E))^2-\frac{3}{4}(H^2\ch_1(E))(H^3\rk(E))\notag\\
&\geq\left(\frac{2509}{3025}\times\frac{10}{11}-\frac{3}{4}\right)(H^2\ch_1(E))(H^3\rk(E))>0.\notag
\end{align}
where the second inequality follows from considering $f(x)=4x^2-4x+\frac{7}{4}$ for $x\leq\frac{79}{220}$, and the third inequality follows from $\frac{H^2\ch_1(E)}{H^3\rk(E)}\geq\frac{10}{11}$.

\textbf{(2)} $\frac{H\ch_2(E)}{H^2\ch_1(E)}\geq\frac{79}{220}$. In this case we consider the line $y=\frac{79}{220}x$ and the parabola $y=\frac{5}{8}x^2-\frac{1}{8}$. They intersect at $x=\frac{79-3\sqrt{2374}}{275}>-\frac{1}{4}$. Then we know that in this region, we always have 
$$\frac{H\ch_2(\tilde{E})}{H^3\rk(\tilde{E})}\leq-\frac{9}{32}\frac{H^2\ch_1(\tilde{E})}{H^3\rk(\tilde{E})}-\frac{5}{32}.$$
By a similar calculation as above, we have
$$\hom(O_X,E)\leq \rk(E)+\frac{9}{40}H^2\ch_1(E)+\frac{4}{5}H\ch_2(E),$$
$$\ch_3(E)\leq \rk(E)-\frac{43}{120}H^2\ch_1(E)+\frac{4}{5}H\ch_2(E),$$
and thus we have 
\begin{align}
Q_{0,0}&\geq4(H\ch_2(E))^2-6(H^2\ch_1(E))(\rk(E))+\frac{43}{20}(H^2\ch_1(E))^2-\frac{24}{5}(H\ch_2(E))(H^2\ch_1(E))\notag\\
&=\frac{4}{5}(\frac{3}{2}(H^2\ch_1(E))^2-(H\ch_2(E))(H^3\rk(E))-(H^2\ch_1(E))(H^3\rk(E)))\label{c1}\\
&+\frac{9}{20}(H^2\ch_1(E))(H^3\rk(E)-H^2\ch_1(E))\label{c2}\\
&+\frac{1}{5}(7H^2\ch_1(E)-10H\ch_2(E)-2H^3\rk(E))(H^2\ch_1(E)-2H\ch_2(E)).\label{c3}
\end{align}
The first term \eqref{c1} is nonnegative, since by Theorem \ref{thm:main5} we have 
$$(H^2\ch_1(E))^2-(H\ch_2(E))(H^3\rk(E))-\frac{1}{2}(H^3\rk(E))^2\geq 0$$
and
\begin{align*}
&\frac{3}{2}(H^2\ch_1(E))^2-(H\ch_2(E))(H^3\rk(E))-(H^2\ch_1(E))(H^3\rk(E))\\
&-(H^2\ch_1(E))^2+(H\ch_2(E))(H^3\rk(E))+\frac{1}{2}(H^3\rk(E))^2\\
&=\frac{1}{2}(H^2\ch_1(E)-H^3\rk(E))^2\geq0.
\end{align*}

The second term \eqref{c2} is also nonnegative because $\frac{H^2\ch_1(E)}{H^3\rk(E)}\leq1$. The third term \eqref{c3} is also nonnegative, because by Theorem \ref{thm:main5} we have $H\ch_2(E)\leq\frac{21}{11}(H^2\ch_1(E))-\frac{31}{22}(H^3\rk(E))$ and thus we have 
$$7H^2\ch_1(E)-10H\ch_2(E)-2H^3\rk(E)\geq\frac{133}{11}(H^3\rk(E)-H^2\ch_1(E))\geq 0.$$
Also $H^2\ch_1(E)-2H\ch_2(E)\geq0$, since we assumed $\frac{H\ch_2(E)}{H^2\ch_1(E)}\leq\frac{1}{2}$ from the beginning.

Thus, when $\nu_{BN}(E)\in(0,\frac{1}{2}]$, we have $Q_{0,0}(E)\geq0$.

\end{proof}

Next we show the case $\nu_{BN}(E)\in[-\frac{1}{2},0)$. A direct proof like Lemma \ref{lemma:pos} is possible, but we follow an enlightening method in \cite[Proposition 3.3]{li2019stability} by considering the derived dual $\mathbb{D}(E):=E^*[1]$ to reduce to Lemma \ref{lemma:pos}. First we recall the following proposition:

\begin{proposition}[{\cite[Proposition 5.1.3(b)]{bayer2011bridgeland}}]\label{prop:dual}
The derived dual $\mathbb{D}(E)$ fits into a dinstinguished triangle:
\begin{equation}
\overline{E}\rightarrow\mathbb{D}(E)\rightarrow T_0[-1]\rightarrow \overline{E}[1]   
\end{equation}
where $\overline{E}$ is a Brill--Noether stable object and $T_0$ is a torsion sheaf of dimension $0$.
\end{proposition}

With this proposition, we can prove the inequality for negative slope:

\begin{lemma}
For a Brill--Noether stable object $E\in \Coh^{0,H}(X)$ with $\nu_{BN}(E)\in[-\frac{1}{2},0)$, we have $Q_{0,0}(E)\geq0$.
\end{lemma}

\begin{proof}

Let $E\in \mathrm{Coh}^{0,H}(X)$ be a Brill--Noether stable object with $\nu_{BN}(E)\in[-\frac{1}{2},0)$. Then $\mathbb{D}(E)$ fits into the distinguished triangle 
\begin{equation}
\overline{E}\rightarrow\mathbb{D}(E)\rightarrow T_0[-1]\rightarrow \overline{E}[1]   
\end{equation}
for some Brill--Noether stable object $\overline{E}$ and some torsion sheaf  $T_0$ of dimension $0$ by Proposition \ref{prop:dual}. The Chern characters are related by 
$$\ch_1(\overline{E})=\ch_1(\mathbb{D}(E))=\ch_1(E)
\;\;\text{and}\;\;\ch_2(\overline{E})=-\ch_2(E).$$
Thus we have $\nu_{BN}(\overline{E})\in(0,\frac{1}{2}]$. Thus we have
$$Q_{0,0}(E)=Q_{0,0}(\mathbb{D}(E))=Q_{0,0}(\overline{E})+6H^2\ch_1(E)\ch_3(T_0)\geq0.$$
\end{proof}

\begin{proof}[Proof of Proposition \ref{prop:main3}]

The remaining case is when $\nu_{BN}(E)=H\ch_2(E)=0$. To show this, we define the object $\Tilde{E}$ to be
$$\Tilde{E}=\mathrm{Cone}(O_X\otimes \Hom(O_X,E)\xrightarrow{ev} E).$$
If $\tilde{E}$ is $\nu_{\alpha,0,H}$-tilt semistable for some $\alpha>0$, then by Theorem \ref{thm:main5} we have 
$$\frac{H^2\ch_1(\Tilde{E})}{H^3\rk(\Tilde{E})}\notin(-\frac{1}{\sqrt{5}},0].$$
This implies 
$$H^3\rk(\Tilde{E})\geq-\sqrt{5}H^2\ch_1(\Tilde{E}).$$
Otherwise, for any $\delta>0$, $\Tilde{E}$ is destablised by some $F_{\delta}$ when considering the $\nu_{\delta^2,\delta,H}$-stability condition. We assume $\delta$ is small enough such that $E$ is $\nu_{\delta^2,\delta,H}$-tilt stable because $E$ is Brill--Noether stable. As $\Hom(F_\delta,\Tilde{E})\neq0$, we have $\Hom(F_{\delta},E)$ or $\Hom(F_{\delta},O[1])$ is nonzero because $\Tilde{E}$ is the cone of them. Then we have $\nu_{\delta^2,\delta,H}(F_{\delta})\leq\nu_{\delta^2,\delta,H}(O[1])$ or $\nu_{\delta^2,\delta,H}(F_{\delta})\leq\nu_{\delta^2,\delta,H}(E)$. By Theorem \ref{thm:main5}, when $\delta<\frac{3}{7}$,
$$\nu_{\delta^2,\delta,H}(E)\leq\nu_{\delta^2,\delta,H}(O_X[1])=\delta.$$
This in total gives $\nu_{\delta^2,\delta,H}(F_{\delta})\leq\nu_{\delta^2,\delta,H}(O[1])=\delta.$ Note that the equality only holds when $F_\delta=O_X[1]$ because both are stable, so we have $\nu_{\delta^2,\delta,H}<\delta$.
Now assume $F_\delta$ has the largest slope among all destabilising objects. Then, by the above argument, the HN filtration for $\Tilde{E}$ has factors $E_i$, with each slope smaller than $\delta$. By the wall property, there is an $\alpha_i$ such that $E_i$ is $\nu_{\alpha_i,0,H}(E_i)$-tilt stable and $\nu_{BN}(E_i)<\delta$. Then by Theorem \ref{thm:main5},
$$\frac{H^2\ch_1(E_i)}{H^3\rk(E_i)}\notin\left[\frac{4\delta-\sqrt{16\delta^2+5}}{5},0\right].$$
As $\delta\rightarrow0$, we have 
$$\frac{H^3\rk(E_i)}{H^2\ch_1(E_i)}\geq-\sqrt{5}.$$
Thus, in any case, we have
$$\hom(O_X,E)\leq \rk(E)+\frac{\sqrt{5}}{8}H^2\ch_1(E).$$
Taking $\overline{E}\rightarrow\mathbb{D}(E)\rightarrow T_0[-1]$, we have
\begin{align*}
\hom(O_X,E[2])&=\hom(E,O_X[1])=\hom(\mathbb{D}(O_X[1]),\mathbb{D}(E))=\hom(O_X,\mathbb{D}(E))\\
&=\hom(O_X,\overline{E})\leq\frac{\sqrt{5}}{8}H^2\ch_1(E)-\rk(E),
\end{align*}
where the first equality is due to Serre duality and being Calabi--Yau, the second is due to duality, the third is straightforward, the fourth is due to the fact that $T_0$ is torsion of dimension $0$ and the inequality is due to $\overline{E}$ being Brill--Noether stable.

Thus, by HRR, we have
$$\ch_3(E)+\frac{7}{12}H^2\ch_1(E)=\chi(E)\leq \hom(O_X,E)+\hom(O_X,E[2])\leq\frac{\sqrt{5}}{4}H^2\ch_1(E).$$
This implies $\ch_3(E)<0$ and we are done.
\end{proof}

\section{Clifford Type Inequality for Curves $X_{2,2,2,4}$}

\subsection{Bound for the Wall}

Let $C:=X_{2,2,2,4}\subset X_{2,2,4}\subset X_{2,4}\subset\mathbb{P}^5$ be the curve of generic smooth complete intersection of $X$ with two quadratic hypersurfaces. By the adjunction formula, we know that the canonical bundle of $C$ is $O_C((-6+2+2+2+4)H)=O_C(4H)$. Then the degree of the canonical bundle is $\deg(O_C(4H))=4*2*2*2*4=128$ and thus the genus is $g=65$ by the formula $2g-2=\deg(O_C(4H))$. Let $S:=X_{2,2,2}$ be a general K3 surface given as the complete intersection of three quadratic hypersurfaces containing this curve of Picard number 1, with three quadratic hypersurfaces coming from the complete intersection that gave us $C$, and let $\imath:C\rightarrow S$ be the embedding. Such a pair $(C,S)$ always exists by \cite[Theorem 1]{ravindra2009noether}. Let $E$ be a slope semistable vector bundle on $C$ with rank $r$ and degree $d$. Then for sufficiently large $\alpha$, the object $\imath_*E$ is a $\sigma_{\alpha,0,H}$-semistable object (\cite[Theorem 3.11]{maciocia2014computing}). We would like to detect its first wall. Suppose the first wall intersects $\Gamma$ at $(\beta_1,\Gamma(\beta_1))$ and $(\beta_2,\Gamma(\beta_2))$ (or the vertical line segments in Proposition \ref{prop:k3w} $(\beta_1,\alpha_1),(\beta_2,\alpha_2)$), where $\beta_1<\beta_2$. By Grothendieck--Riemann--Roch, we have 
$$\ch(\imath_*E)=(0,4rH,d+(1-g)r).$$
We use $\mu$ to denote the slope of $E$ on $C$. In this case, the curve $\Gamma$ (Figure \ref{graph:gamma}) is 
\begin{equation*}
    \Gamma(x)=
    \begin{cases}
      4x^2-1+(x-n)^2 & \text{if }x\in[n-\frac{1}{2},n)\cup(n,n+\frac{1}{2}], n\in\mathbb{Z}\\
      4x^2 & \text{if }x\in\mathbb{Z}.
    \end{cases}
\end{equation*}

\begin{lemma}\label{lem:walb}
When $\mu\in[0,64]$, we have 
\begin{enumerate}
    \item When $\mu\in[0,\frac{256}{3}-\frac{32\sqrt{61}}{3})$, the object $\imath_*E$ is Brill--Noether semistable.
    \item When $\mu\in[31,32]$, then we have $$\beta_1\geq\frac{\mu}{32}-4\;\;\text{and}\;\;\beta_2\leq1.$$
    \item When $\mu\in[32,33]$, then we have $$\beta_1\geq -3\;\;\text{and}\;\;\beta_2\leq\frac{\mu}{32}.$$
    \item When $\mu\in[63,64]$, then we have$$\beta_1\geq \frac{\mu}{32}-4\;\;\text{and}\;\;\beta_2\leq2.$$
    \item Otherwise, we have $$\beta_1\geq\frac{\mu}{32}-4,\;\;\text{and}\;\; \beta_2\leq\frac{\mu}{32}.$$
\end{enumerate}
\end{lemma}

\begin{proof}
The proof is similar to the proof of  \cite[Lemma 3.1]{feyzbakhsh2018higher} and \cite[Lemma 4.10]{li2019stability}. On the wall, there is a destablising sequence $0\rightarrow F_2\rightarrow\imath_*E\rightarrow F_1\rightarrow0.$ This sequence actually happens in the heart $\Coh^0(S)$. Thus we have the following long exact sequence for cohomology sheaves:
$$0\rightarrow H^{-1}(F_1)\rightarrow F_2\rightarrow\imath_*E\rightarrow H^0(F_1)\rightarrow 0$$
$$\rk\;\;\;\;\;\;\;\;\;\;s\;\;\;\;\;\;\;\;\;\;\;\;\;\;s\;\;\;\;\;\;\;\;\;0\;\;\;\;\;\;\;\;\;\;\;\;\;0\;\;\;\;\;\;\;\;\;\;\;\;\;\;$$
$$\ch_1\;\;\;\;\;\;\;\;d_1H\;\;\;\;\;\;\;\;\;d_2H\;\;\;\;\;4rH\;\;\;\;\;\;\;\;4aH\;\;\;\;\;\;\;\;\;$$
The left side is $0$ since  $H^{-1}(\imath_*E)=0$. Now we have two cases: $s=0$ or $s\neq0$.

Suppose $s=0$. Then $H^{-1}(F_1)=0$, because this term is torsion-free. Since $\imath_*E$ is supported on $C$, the other two supports are contained in $C$. Since $F_2$ destablises $\imath_*E$, $F_2$ and $\imath_*E$ have the same tilt slope. Thus we get $\ch(F_2)=k\ch(\imath_*E)$, which contradicts the destabilising sequence being on the first wall. So this case cannot happen.

Suppose $s\neq0$. Let $T(F_2)$ be the maximal torsion subsheaf of $F_2$. Suppose $\ch_1(T(F_2))=4tH$. Since $E$ is of rank $r$, we have
$$r-a\leq \rk(\imath^*T(F_2))+\rk(\imath^*(F_2/T(F_2)))=s+t.$$
From this, we get the following inequality:
$$\frac{H\ch_1(F_2/T(F_2))}{sH^2}-\frac{H\ch_1(H^{-1}(F_1))}{sH^2}=\frac{d_2-4t-d_1}{s}=\frac{4r-4a-4t}{s}\leq4.$$

By Proposition \ref{prop:k3w}, we have $F_1$ is semistable with the same slope as $\imath_*E$ on the wall. In particular, if $-4<\beta_1$, it is in the heart $\mathrm{Coh}^{\beta_1+\epsilon}$ when $\epsilon\rightarrow0^+$. Thus
$$\frac{H\ch_1(H^{-1}(F_1))}{H^2s}=\frac{d_1}{s}\leq\beta_1.$$
Thus
$$\frac{H\ch_1(F_2/T(F_2))}{H^2s}\leq4+\beta_1.$$
Suppose $\beta_1\leq-4$, then $\frac{d_1}{s}\leq-4$, and thus
$$\frac{H\ch_1(F_2/T(F_2))}{H^2s}\leq0.$$ This contradicts the assumption that $F_2/T(F_2)\in\Coh^0(S)$.

On the other hand, by Proposition \ref{prop:k3w}, we know that
$$\frac{H\ch_1(F_2/T(F_2))}{H^2s}\geq\beta_2.$$
Thus we have that
$$\beta_2-\beta_1\leq 4\text{ and }-4\leq\beta_1,\beta_2\leq4.$$
Also by Proposition \ref{prop:k3w}, we know that the wall needs to have slope of $\frac{\mu}{4}-16$ by the Chern character of $\imath_*E$. However, there are two separate cases we need to consider here, where the first case is that the wall ends on $\Gamma$, and the second case is the wall ends at the vertical line segment from $(n,\frac{H^2}{2}n^2)$ to $\left(n,\frac{H^2}{2}n^2-1\right)$. Since we want to detect the largest range of the wall, the maximum can happen when $\beta_2-\beta_1=4$. Therefore from now on we assume that $\beta_2-\beta_1$ is equal to $4$.

\textbf{Case 1:} The object $\imath_*E$ is Brill--Noether semistable. In this case the line $l$ intersects the y-axis below zero, where the line $l$ is the line with slope 
$$\frac{\Gamma(\beta_2)-\Gamma(\beta_1)}{\beta_2-\beta_1}=\frac{H^2\ch_2(\imath_*E)}{H\ch_1(\imath_*E)}=\frac{\mu}{4}-16$$
and intersecting the curve at $\beta_1$ and $\beta_2=\beta_1+4$. Then, similar to the case 2 below, we have that the line intersects $\Gamma$ at $\beta_2=\frac{\mu}{32}$ and $\beta_1=\frac{\mu}{32}-4$. As the slope is small in this case, we can assume $\Gamma(x)=5x^2-1$ near $\beta_2$. Then the intersection point with the y-axis is
$$t:=-\frac{3}{1024}\mu^2+\frac{\mu}{2}-1,$$
and the requirement for $\mu$ is $t<0$, which is equivalent to $\mu\in[0,\frac{256}{3}-\frac{32\sqrt{61}}{3})$.

\textbf{Case 2:} The end point is on $\Gamma$. Then we have
$$\frac{\Gamma(\beta_2)-\Gamma(\beta_1)}{\beta_2-\beta_1}=\frac{H^2\ch_2(\imath_*E)}{H\ch_1(\imath_*E)}=\frac{\mu}{4}-16.$$
Substituting $\beta_2=\beta_1+4$ and the equation of $\Gamma$, we have
$$\Gamma(\beta_2)-\Gamma(\beta_1)=\Gamma(\beta_2)-\Gamma(\beta_2-4)=\frac{H^2}{2}((\beta_2)^2-(\beta_2-4)^2)=\frac{H^2}{2}(8\beta_2-16).$$
This quantity is equal to $$(\beta_2-\beta_1)(\frac{\mu}{4}-16)=\mu-64.$$
Thus $\beta_2=\frac{\mu}{32}$, and $\beta_1=\frac{\mu}{32}-4$.

\textbf{Case 3}: The end point is on the vertical line segment from $(n,\frac{H^2}{2}n^2)$ to $\left(n,\frac{H^2}{2}n^2-1\right)$. First one notices that the slope is always negative in the range that we are considering. So the special endpoints happen when both endpoints touch the vertical wall. So we just pick $\beta_1=-n$, and the endpoint has a vertical value of $4n^2$. The corresponding $\beta_2$ is $-n+4$, and the vertical minimum value is $4(4-n)^2-1$. By a direct calculation, we get
\begin{align*}
    \beta_2=1\text{ when }\mu\in[31,32], \;\;\;\; \beta_2=2\text{ when }\mu\in[63,64], \;\;\text{and}\;\; \beta_1=-3\text{ when }\mu\in[32,33].
\end{align*}
\end{proof}

\subsection{Upper Bound of Global Sections}

An upper bound for the global sections of semistable objects on K3 surfaces is already known in \cite{feyzbakhsh2019mukai}. However, we need to produce a better bound to get our final result. We will see that in some regions, the two results coincide. The idea here is that we consider proper spherical twist of the object. The spherical twist of the object is semistable with respect to some generic stability condition $\sigma_{\alpha,\beta}$ (refer to the proof of Proposition \ref{prop:boundsbnobj}) and thus the Chern characters need to lie under the curve $\Gamma$. Let $S$ be the K3 surface we mentioned at the beginning of this section, which is the complete intersection of three quadratic hypersurfaces. The following proposition is essential to the calculation.

\begin{proposition}[{\cite[Proposition 3.4]{feyzbakhsh2019mukai}}]
Let $X$ be a K3 surface. Let $E\in \mathrm{Coh}^{0,H}(X)$, then there exists $\epsilon>0$ such that the HN filtration for $E$ is the same sequence with respect to all $\sigma_{\alpha,0,H}$ for $0<\alpha<\epsilon$, and is denoted by 
$$0=\Tilde{E}_0\subset\Tilde{E}_1\subset...\subset\Tilde{E}_n=E.$$

\end{proposition}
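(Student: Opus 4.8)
The plan is to run a wall--crossing argument along the vertical ray $\{(\alpha,0)\colon 0<\alpha\le\alpha_0\}$, for a fixed $\alpha_0$ with $\sigma_{\alpha_0,0,H}$ a stability condition, and to prove that this ray meets only finitely many walls of $E$; the HN filtration is then forced to be constant below the smallest of them. The elementary point driving everything is that $\Im Z_{\alpha,0}(F)=\frac{H\ch_1(F)}{H^2}$ is independent of $\alpha$, while $\Re Z_{\alpha,0}(F)=-\ch_2(F)+\alpha\rk(F)$ is affine in $\alpha$. Consequently any subobject $F\subset E$ in $\Coh^{0,H}(X)$ satisfies $0\le H\ch_1(F)\le H\ch_1(E)$, so (as $\Pic(S)=\mathbb{Z}H$) the quantity $H\ch_1(F)$ takes one of finitely many values; and for two fixed numerical classes the slopes $-\Re Z_{\alpha,0}/\Im Z_{\alpha,0}$ agree for at most one value of $\alpha$ unless the classes are proportional, in which case they produce no wall.

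First I would bound the numerical classes that can occur as Jordan--H\"older factors at a wall. If $\alpha_*\in(0,\alpha_0]$ is a wall, then $E$ is strictly $\sigma_{\alpha_*,0,H}$--semistable and its JH factors $P_i$ are stable of the same slope as $E$. Each $P_i$ is $\sigma_{\alpha_*,0,H}$--semistable, so Proposition \ref{prop:gamma} gives $v(P_i)^2\ge-2$; writing $\overline{\Delta}_H(P_i)=H^2 v(P_i)^2+2H^2\rk(P_i)^2$ this says $\overline{\Delta}_H(P_i)\ge 2H^2(\rk(P_i)^2-1)$. On the other hand Lemma \ref{lem:dis} bounds $\overline{\Delta}_H(P_i)\le\overline{\Delta}_H(E)$. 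Combining the two inequalities bounds $\rk(P_i)^2$, hence $\rk(P_i)$, and then, together with the finitely many values of $H\ch_1(P_i)$, it bounds $\ch_2(P_i)$ for every factor of nonzero rank. Thus only finitely many classes of nonzero rank occur among the JH factors on the whole ray, and any destabilizing subobject $F$, being an extension of such factors with $v(F)$ a partial sum of $v(E)$, also ranges over a finite set of classes.

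The rank zero factors need separate but easy treatment: they are subquotients of the (fixed) torsion cohomology sheaves of the complex $E$, hence form a bounded family by Grothendieck boundedness and contribute only finitely many classes, and their slope $\frac{\ch_2}{H\ch_1/H^2}$ is independent of $\alpha$. Granting this finiteness, every wall on the ray is the unique value of $\alpha$ at which two of these finitely many classes acquire equal slope, so there are finitely many walls. By Proposition \ref{prop:k3w} these are the only values of $\alpha$ at which the stability, and hence the HN filtration, of $E$ can change. Taking $\epsilon$ to be the smallest wall (or $\alpha_0$ if there is none) produces a single HN filtration $0=\widetilde{E}_0\subset\cdots\subset\widetilde{E}_n=E$ valid for all $0<\alpha<\epsilon$.

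The \emph{main obstacle} is exactly this finiteness step, and within it the danger that walls accumulate at the boundary point $(0,0)$, which lies on the curve $\alpha=\Gamma(\beta)$ where $\sigma_{\alpha,\beta}$ degenerates. What rules such accumulation out is the two--sided control of the discriminant of the potential factors: Lemma \ref{lem:dis} supplies the upper bound $\overline{\Delta}_H\le\overline{\Delta}_H(E)$, while the support property in the form $v^2\ge-2$ from Proposition \ref{prop:gamma} converts this into a bound on the rank, and hence on the whole class. I expect the rank zero contribution, together with the bookkeeping of which partial sums of JH classes actually arise as subobjects of the fixed object $E$, to be the only genuinely fiddly points.
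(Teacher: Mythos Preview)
The paper does not prove this proposition; it is simply quoted from \cite{feyzbakhsh2019mukai}, so there is no in-paper argument to compare against. Your overall plan---bound the numerical classes of the possible factors via $\overline{\Delta}_H\le\overline{\Delta}_H(E)$ from Lemma~\ref{lem:dis} together with $v^2\ge-2$ from Proposition~\ref{prop:gamma}, conclude finitely many walls on the ray, and take $\epsilon$ below the lowest one---is exactly the standard route.

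There is, however, a genuine gap. You assert that at any wall $\alpha_*$ the object $E$ is strictly $\sigma_{\alpha_*,0,H}$-semistable and then apply Lemma~\ref{lem:dis} to bound $\overline{\Delta}_H$ of its Jordan--H\"older pieces by $\overline{\Delta}_H(E)$. But the proposition is stated for an arbitrary $E\in\Coh^{0,H}(X)$, with no semistability hypothesis; for such $E$ a ``wall'' means a value of $\alpha$ at which the HN filtration jumps, and $E$ itself need not be semistable there (once $E$ becomes unstable it typically stays unstable, yet its HN filtration may continue to change). Lemma~\ref{lem:dis} therefore does not apply to $E$, and $\overline{\Delta}_H(E)$ is not the right constant---it is not even guaranteed to be nonnegative. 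The fix is to run your argument on the HN factors instead: start with the HN factors $G_1,\dots,G_m$ of $E$ at $\alpha_0$, each of which \emph{is} semistable; your wall analysis and Lemma~\ref{lem:dis} now apply to each $G_j$, and across every wall the new factors have $\overline{\Delta}_H$ bounded by $\max_j\overline{\Delta}_H(G_j)$. Iterating gives a uniform bound on ranks of all HN factors that ever appear, and hence the finiteness of classes and of walls that you want. Your treatment of rank-zero factors (``subquotients of the torsion cohomology of $E$'') is also not quite right---a rank-zero object of $\Coh^{0,H}(X)$ can have two nonzero cohomology sheaves of equal rank---but here $v^2\ge-2$ combined with $0< H\ch_1\le H\ch_1(E)$ already bounds $\ch_2$ directly, so no separate argument is needed.
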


\begin{proposition}
\label{prop:boundsbnobj}
Let $F$ be a $\nu_{BN}$-semistable object in $\Coh^{0,H}(S)$. Then we have
\begin{equation}
 \hom(\cO_S,F)\leq
    \begin{cases}
      \rk(F)+\frac{2}{3}\frac{H\ch_1(F)}{H^2}+\frac{7}{6}\ch_2(F)\\
      -\frac{1}{6}\sqrt{\left(4\frac{H\ch_1(F)}{H^2}+\ch_2(F)\right)^2-60\left(\frac{H\ch_1(F)}{H^2}\right)^2} &\text{if } \frac{H^2\ch_2(F)}{H\ch_1(F)}\in[\frac{11}{2},\frac{15}{2})\cup(8,\frac{97}{10}]\\
      \rk(F)+\ch_2(F)+\frac{5(H\ch_1(F))^2}{8(H^2\ch_2(F)+2H\ch_1(F))} &\text{if }\frac{H^2\ch_2(F)}{H\ch_1(F)}\in[\frac{1}{2},3)\cup(4,\frac{11}{2}]\\
      \rk(F)+\frac{\ch_2(F)}{2}+\frac{1}{2}\sqrt{\ch_2(F)^2+20\left(\frac{H\ch_1(F)}{H^2}\right)^2} & \text{if }\frac{H^2\ch_2(F)}{H\ch_1(F)}\in[-\frac{1}{2},\frac{1}{2}]\\
      \rk(F)+\frac{5(H\ch_1(F))^2}{H^2(2H\ch_1(F)-H^2\ch_2(F))} & \text{if } \frac{H^2\ch_2(F)}{H\ch_1(F)}\in[-\frac{11}{2},-4)\cup(-3,-\frac{1}{2}]\\ 
      \rk(F)+\frac{2}{3}\frac{H\ch_1(F)}{H^2}-\frac{1}{6}\ch_2(F)\\-\frac{1}{6}\sqrt{\left(\ch_2(F)-4\frac{H\ch_1(F)}{H^2}\right)^2-60\left(\frac{H\ch_1(F)}{H^2}\right)^2} & \text{if }\frac{H^2\ch_2(F)}{H\ch_1(F)}\in[-\frac{97}{10},-8)\cup(-\frac{15}{2},-\frac{11}{2}]\\
      \rk(F)+\frac{3}{8}\frac{H\ch_1(F)}{H^2}-\frac{\ch_2(F)}{16}\\
      -\frac{1}{16}\sqrt{\left(\ch_2(F)-6\frac{H\ch_1(F)}{H^2}\right)^2-160\left(\frac{H\ch_1(F)}{H^2}\right)^2} & \text{if } \frac{H^2\ch_2(F)}{H\ch_1(F)}\in[-\frac{193}{14},-12)\cup(-\frac{35}{3},-\frac{97}{10}]\\
      \rk(F)+\frac{4}{15}\frac{H\ch_1(F)}{H^2}-\frac{\ch_2(F)}{30}\\
      -\frac{1}{30}\sqrt{\left(\ch_2(F)-8\frac{H\ch_1(F)}{H^2}\right)^2-300\left(\frac{H\ch_1(F)}{H^2}\right)^2} & \text{if } \frac{H^2\ch_2(F)}{H\ch_1(F)}\in [-\frac{107}{6},-16)\cup(-\frac{63}{4},-\frac{193}{14}]\\
      \rk(F)-\frac{(H\ch_1(F))^2}{16\ch_2(F)} & \text{if } \frac{H^2\ch_2(F)}{H\ch_1(F)}\in[-4n,\frac{1-4n^2}{n}]\text{ and }n\in\mathbb{Z}_{>0} \\
      \rk(F)+\ch_2(F)+\frac{(H\ch_1(F))^2}{16\ch_2(F)} & \text{if } \frac{H^2\ch_2(F)}{H\ch_1(F)}\in[\frac{4n^2-1}{n},4n]\text{ and }n\in\mathbb{Z}_{>0} 
    \end{cases}       
\end{equation}
\end{proposition}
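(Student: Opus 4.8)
The plan is to bound $\hom(\cO_S,F)$ by passing to a spherical twist of $F$ along $\cO_S$ and forcing the numerical class of the twist to lie on or below the curve $\Gamma$. I split according to the sign of the Brill--Noether slope $t:=\frac{H^2\ch_2(F)}{H\ch_1(F)}$, which is the slope of the ray from the origin through $pr(F)$. When $t<0$ I bound $\hom(\cO_S,F)$ directly, using $\tilde F=\Cone\big(\Hom(\cO_S,F)\otimes\cO_S\xrightarrow{\mathrm{ev}}F\big)$; when $t>0$ I instead bound $\ext^1(\cO_S,F)$ using the dual twist $\tilde F'=\Cone\big(F[-1]\xrightarrow{\mathrm{can}}\Hom(F[-1],\cO_S)\otimes\cO_S\big)$ from Lemma \ref{lem:twist}, noting that $\dim\Hom(F[-1],\cO_S)=\ext^1(F,\cO_S)=\ext^1(\cO_S,F)$ by Serre duality, and then recover $\hom(\cO_S,F)=\chi(\cO_S,F)+\ext^1(\cO_S,F)$ after checking $\ext^2(\cO_S,F)=\hom(F,\cO_S)^\ast=0$ from the slope of $F$. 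In either case the twisted object is Brill--Noether semistable; this requires upgrading Lemma \ref{lem:twist} from the stable to the semistable setting via the HN-filtration proposition proved just above.

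The key observation is that the twist changes only the rank: $\ch(\tilde F)$ differs from $\ch(F)$ by a multiple of $\ch(\cO_S)=(1,0,0)$, so $pr(\tilde F)=\big(\frac{H\ch_1(F)}{H^2\rk(\tilde F)},\frac{\ch_2(F)}{\rk(\tilde F)}\big)$ stays on the ray through the origin of slope $t$ (or its reflection through the origin, once the rank becomes negative), while the number of copies of $\cO_S$ removed resp.\ added equals $\hom(\cO_S,F)$ resp.\ $\ext^1(\cO_S,F)$. Since $\tilde F$ is semistable, Proposition \ref{prop:gamma} together with the discussion relating $v_0^2\ge -2$ to $\Gamma$ forces $pr(\tilde F)$ to lie on or below $\Gamma$. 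As we strip off (resp.\ adjoin) global sections the magnitude of the rank grows and $pr(\tilde F)$ slides along this ray toward the origin, where $\Gamma$ dips below the horizontal axis; the extremal admissible position is therefore the point at which the ray of slope $t$ meets $\Gamma$ \emph{closest to the origin}. Solving $tx=\Gamma(x)=\frac{H^2}{2}x^2-\gamma(x)$ on the relevant arc and translating the corresponding extremal rank back into $\hom$ (through $\hom=\chi+\ext^1$ in the positive case) yields the stated bounds.

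The arc that the ray first reaches depends on $t$, and this is exactly the origin of the case division: the first arc $5x^2-1$ governs $t\in[-\tfrac14,\tfrac14]$, the $n=1$ arc $5x^2-2x$ governs $t\in[\tfrac12,3)$, the $n=2$ arc governs $t\in[\tfrac{11}{2},\tfrac{15}{2})$, and so on, each producing the square-root bound coming from the quadratic $\frac{H^2}{2}x^2-\gamma(x)-tx=0$. For slopes near $t=4n$, where the ray first meets $\Gamma$ at a spike point $(n,4n^2)$ with $v_0^2=-2$, I instead bound $\Gamma$ from below by the bare parabola $\frac{H^2}{2}x^2$ (equivalently use $\overline{\Delta}_H(\tilde F)\ge 0$ rather than the sharper $v_0^2\ge -2$); this gives the clean rational bounds $\rk(F)\pm\frac{(H\ch_1(F))^2}{16\ch_2(F)}$ valid on $[\tfrac{4n^2-1}{n},4n]$ and its reflection. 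Matching the arcs and spikes to the listed intervals then exhausts all cases.

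The main obstacle is the bookkeeping forced by the geometry of $\Gamma$ near its spikes. Because the region below $\Gamma$ is non-convex, I must show interval by interval that the ray genuinely first crosses into the admissible region at the claimed arc: I have to rule out spurious admissible pockets closer to the origin (in particular, verify that at each intermediate spike $x=m<x_+$ the ray lies \emph{above} the spike value $\frac{H^2}{2}m^2$, so that point is inadmissible), and correctly track the passage of $\tilde F$ to negative rank across the origin. Deciding, for each interval, whether the sharp arc bound or the safe parabola bound is the one that applies is the delicate part. The subsidiary points — extending Lemma \ref{lem:twist} to Brill--Noether semistable objects, the vanishing $\ext^2(\cO_S,F)=0$ underlying $\hom=\chi+\ext^1$, and the fact that the twist removes resp.\ creates exactly $\hom(\cO_S,F)$ resp.\ $\ext^1(\cO_S,F)$ copies of $\cO_S$ — are what make the extremal ray computation legitimate.
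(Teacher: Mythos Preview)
Your proposal is correct and follows essentially the same approach as the paper: spherical-twist $F$ along $\cO_S$ (using the evaluation cone for $t<0$ and the dual cone for $t>0$, recovering $\hom$ from $\chi+\ext^1$ in the latter case), argue that $pr(\tilde F)$ must lie below $\Gamma$ via Proposition~\ref{prop:gamma}, and then solve for the extremal intersection of the ray $y=tx$ with the appropriate arc of $\Gamma$, falling back on the bare Bogomolov parabola for slopes in the spike intervals $[\tfrac{4n^2-1}{n},4n]$ and their reflections. The paper argues the ``below $\Gamma$'' step slightly differently --- rather than invoking Lemma~\ref{lem:twist} upgraded to the semistable case, it observes that $\tilde F$ is $\sigma_{\alpha,\beta}$-semistable for $(\alpha,\beta)$ on the ray and that if $pr(\tilde F)$ lay above $\Gamma$ then by convexity of the forbidden region along the ray some HN factor would too --- but this is the same mechanism, and your awareness of the non-convexity issue and the need to identify the correct arc is exactly the bookkeeping the paper performs case by case.
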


\begin{figure}
\centering
\begin{tikzpicture}
[
    dot/.style={
        draw=black,
        fill=blue!90,
        circle,
        minimum size=3pt,
        inner sep=0pt,
        solid,
    },
]
\begin{axis}[
    axis lines = center,
    xlabel = \(\frac{H\ch_1}{H^2\rk}\),
    ylabel = {\(\frac{\ch_2}{\rk}\)},
]

\addplot [
    domain=-0.5:0.5, 
    samples=100, 
    color=blue,
    name path=gamma
    ]
    {5*x^2  - 1};
\addplot [
    domain=0.5:1.5, 
    samples=100, 
    color=blue,
    name path=gammap1
    ]
    {5*x^2  - 2*x};
\addplot [
    domain=-1.5:-0.5, 
    samples=100, 
    color=blue,
    ]
    {5*x^2  +2*x};
\addplot [
    domain=1.5:2.5, 
    samples=100, 
    color=blue,
    name path=gammap2
    ]
    {5*x^2  - 4*x+3};
\addplot [
    domain=-2.5:-1.5, 
    samples=100, 
    color=blue,
    ]
    {5*x^2  + 4*x+3};
\addplot [
    domain=2.5:3.5, 
    samples=100, 
    color=blue,
    name path=gammap3
    ]
    {5*x^2  - 6*x+8};
\addplot [
    domain=-3.5:-2.5, 
    samples=100, 
    color=blue,
    name path=gamman3
    ]
    {5*x^2  + 6*x+8};
\addplot [
    domain=3.5:4, 
    samples=100, 
    color=blue,
    ]
    {5*x^2  - 8*x+15};
\addplot [
    domain=-4:-3.5, 
    samples=100, 
    color=blue,
    ]
    {5*x^2  + 8*x+15};
\addplot [
    domain=-2:4, 
    samples=100, 
    color=red,
    name path=pol
    ]
    {10*x};   
\addplot [
    domain=-4:2, 
    samples=100, 
    color=green,
    name path=nel
    ]
    {-10*x};   
 \fill [name intersections={of=pol and gammap3}] (intersection-1) circle (1.5pt)
            coordinate (a) node [dot,label=below:{$(x_0,\Gamma(x_0))$}] {};

\fill [name intersections={of=nel and gamman3}] (intersection-1) circle (1.5pt)
            coordinate (b) node [dot,label=below:{$(x_1,\Gamma(x_1))$}] {};
\end{axis}
\end{tikzpicture}
\caption{The $\Gamma$ curve (blue) intersects with positive slope line (red) and negative slope line (green)} \label{graph:gamma}
\end{figure}
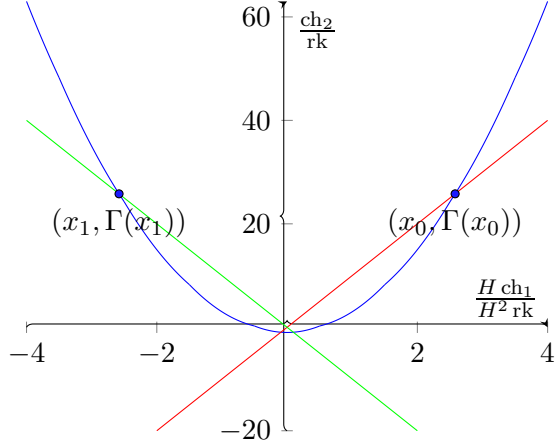

\begin{proof}
The proof is divided into three parts, depending on the sign of $\frac{H^2\ch_2(F)}{H\ch_1(F)}$ and whether it is close to $4n$ for some integer $n$.

\textbf{First Case} When $\frac{H^2\ch_2(F)}{H\ch_1(F)}>0$ and it is not inside $[\frac{4n^2-1}{n},4n]$, we have $\nu_{BN}(O_S[1])=+\infty$. Thus
$$\hom(O_S,F[-1-i])=\hom(O_S[1+i],F)=0$$
for $i\geq0$. On the other hand, there exists some $(\alpha,\beta)$ on the line through $0$ and $pr(F)$ such that $\alpha>\Gamma(\beta)$. Thus
$$\hom(O_S,F[2+i])=\hom(F,O_S[-i])=0$$
for $i\geq0$, because $O_S[1]$ and $F$ both are $\sigma_{\alpha,\beta}$-semistable of same slope by the nesting wall theorem. Thus we have
$$\chi(O_S,F)=\hom(O_S,F)-\hom(O_S,F[1]).$$

We consider the object
$$\Tilde{F}[1]=\mathrm{Cone}(F\xrightarrow{can} O[1]\otimes \Hom(F,O[1])^*).$$ 
Then $\Tilde{F}$ is $\sigma_{\alpha,\beta}$-semistable for $\beta<0$. We claim that $pr(\tilde{F})$ is below the curve $\Gamma$. Otherwise, suppose $pr(\tilde{F})$ is not below the curve $\Gamma$. Consider the HN factors $F_i$ of $\tilde{F}$ with respect to $\sigma_{\alpha,\beta}$. Then by Lemma \ref{lem:wall}, the $pr(F_i)$ are on the line passing through $(\alpha,\beta)$ and $\tilde{F}$. As the segment of this line above $\Gamma$ is convex, there is at least one HN factor $F_j$ is above $\Gamma$. However, by Proposition \ref{prop:gamma}, $F_j$ cannot lie above the curve $\Gamma$. So the point $pr(\Tilde{F})$ is below the curve $\Gamma$. We consider the line passing through $0$ with slope $k$ (Figure \ref{graph:gamma}), where $k=\frac{H^2\ch_2(F)}{H\ch_1(F)}$.  Then it is obvious that $pr(O[1])$, $pr(F)$, and $pr(\Tilde{F}[1])$ are on this line. Let $x_0$ be the intersection point of the line with $\Gamma$. Depending on the slope, we can solve $x_0$ explicitly:

When $\frac{H^2\ch_2(F)}{H\ch_1(F)}\in[\frac{11}{2},\frac{15}{2})\cup(8,\frac{97}{10}]$, we have $\Gamma(x)=5x^2-4x+3$. By solving the intersection equation with the line, we have 
$$x_0=\frac{4+k+\sqrt{((4+k)^2-60)}}{10}.$$
Thus, since $p_H(\Tilde{F}[1])$ is below $\Gamma$, we have
$$\frac{H\ch_1(\Tilde{F}[1])}{H^2\rk(\Tilde{F}[1])}\geq x_0.$$
(The case $\frac{H\ch_1(\Tilde{F}[1])}{H^2\rk(\Tilde{F}[1])}<0$ is also contained in this case.)
Here, $\ch_1(\tilde{F}[1])=-\ch_1(\tilde{F})$, $\rk(\tilde{F}[1])=-\rk(\tilde{F})$ and $\rk(\tilde{F})=\hom(O_S,F[1])+\rk(F)>0$.
This implies that 
$$\frac{H\ch_1(F)}{H^2x_0}\geq \rk(\Tilde{F})=\hom(O_S,F[1])+\rk(F),$$
and considering this with 
$$\hom(O_S,F)=\chi(O_S,F)+\hom(O_S,F[1])$$
we get the conclusion.

When $\frac{H^2\ch_2(F)}{H\ch_1(F)}\in[\frac{1}{2},3)\cup(4,\frac{11}{2}]$, we use the same calculation method with $\Gamma(x)=5x^2-2x$ and we get the conclusion.

\textbf{Second case} When $\frac{H^2\ch_2(F)}{H\ch_1(F)}<0$ and it is not inside $[-4n,\frac{1-4n^2}{n}]$, we know that there exist some $(\alpha,\beta)$ on the line through $0$ and $pr(F)$ with $\alpha>\Gamma(\beta)$ and by nesting wall theorem, $O[1]$ and $F$ are $\sigma_{\alpha,\beta}$-semistable. If we consider 
$$\Tilde{F}=\mathrm{Cone}(\Hom(O_X,F)\otimes O\xrightarrow{ev} F),$$
we get that $\Tilde{F}$ is $\sigma_{\alpha,\beta}$-semistable for $\beta>0$. By a similar argument to the first case, we know that the reduced character $pr(\tilde{F})$ is below the curve $\Gamma$. Consider the line passing through $0$ and $pr(F)$ (Figure \ref{graph:gamma}). Then $pr(\Tilde{F})$ is on the same line. Let $x_1$ be the intersection of the line and $\Gamma$ on the left-hand side. We can solve $x_1$ explicitly depending on the slope of the line:

When $\frac{H^2\ch_2(F)}{H\ch_1(F)}\in [-\frac{107}{6},-16)\cup(-\frac{63}{4},-\frac{193}{14}]$, let the line be $y=kx$, where $k=\frac{H^2\ch_2(F)}{H\ch_1(F)}$. Recall that when $x\in[n-\frac{1}{2},n+\frac{1}{2}]$, the curve $\Gamma$ has the form
\begin{equation*}
    \Gamma(x)=
    \begin{cases}
      4x^2-1+(x-n)^2 & \text{if }x\neq n\\
      4x^2 & \text{if }x=n
    \end{cases}
\end{equation*}
Then by solving the equation, we get
$$x_1=\frac{k-8-\sqrt{(8-k)^2-300}}{10}.$$
Then we have the requirement that 
$$\frac{H\ch_1(\Tilde{F})}{H^2\rk(\Tilde{F})}\leq x_1.$$
This in turn tells us that
$$\frac{H\ch_1(\Tilde{F})}{H^2x_1}\leq \rk(\Tilde{F})=\rk(F)-\hom(O_S,F),$$
and then this gives
$$\hom(O_S,F)\leq \rk(F)+\frac{4}{15}\frac{H\ch_1(F)}{H^2}-\frac{\ch_2(F)}{30}-\frac{1}{30}\sqrt{\left(\ch_2(F)-8\frac{H\ch_1(F)}{H^2}\right)^2-300\left(\frac{H\ch_1(F)}{H^2}\right)^2}.$$

For $\frac{H^2\ch_2(F)}{H\ch_1(F)}$ inside the other range, a similar calculation is done with the appropriate expression for $\Gamma$ used.

\textbf{Third case} When the slope $\frac{H^2\ch_2(F)}{H\ch_1(F)}\in[-4n,\frac{1-4n^2}{n}]$ or $[\frac{4n^2-1}{n},4n]$. In this case, we just use a proper spherical twist as above and use the Bogomolov inequality. If moreover $\frac{H^2\ch_2(F)}{H\ch_1(F)}>0$, then we consider $\tilde{F}$ as in the first case. Because $\tilde{F}$ is $\sigma_{\alpha,\beta}$-semistable for some $(\alpha,\beta)$, we have
$$(H\ch_1(F))^2-2H^2(\ch_2(F))(\rk(F)+\mathrm{ext}^1(O_S,F))\geq 0,$$
hence
$$ \mathrm{ext}^1(O_S,F)\leq \rk(F)+\frac{(H\ch_1(F))^2}{2H^2\ch_2(F)}+\ch_2(F).$$
If $\frac{H^2\ch_2(F)}{H\ch_1(F)}<0$, we consider $\tilde F$ as in the second case. Because $\tilde{F}$ is $\sigma_{\alpha,\beta}$-semistable for some $(\alpha,\beta)$, we have
$$(H\ch_1(F))^2-2H^2(\ch_2(F))(\rk(F)-\hom(O_S,F))\geq 0,$$
$$\implies \hom(O_S,F)\leq \rk(F)-\frac{(H\ch_1(F))^2}{2H^2\ch_2(F)}.$$

\end{proof}

\begin{remark}
We see that the bound for the range $[-\frac{1}{2},\frac{1}{2}]$ works for all other ranges. This is the same bound given by \cite{feyzbakhsh2019mukai}.
\end{remark}

Next, we would like to show some kind of convexity of the above bound. Let

\begin{equation}
  \spadesuit(x,y)=
    \begin{cases}
      \frac{2}{3}\frac{y}{H^2}+\frac{7}{6}x
      -\frac{1}{6}\sqrt{\left(4\frac{y}{H^2}+x\right)^2-60\left(\frac{y}{H^2}\right)^2} &\text{if } \frac{H^2x}{y}\in[\frac{11}{2},\frac{15}{2})\cup(8,\frac{97}{10}]\\
      x+\frac{5y^2}{8(H^2x+2y)} &\text{if }\frac{H^2x}{y}\in[\frac{1}{2},3)\cup(4,\frac{11}{2}]\\
      \frac{x}{2}+\frac{1}{2}\sqrt{x^2+20\left(\frac{y}{H^2}\right)^2} & \text{if }\frac{H^2x}{y}\in[-\frac{1}{2},\frac{1}{2}]\\
      \frac{5y^2}{H^2(2y-H^2x)} & \text{if } \frac{H^2x}{y}\in[-\frac{11}{2},-4)\cup(-3,-\frac{1}{2}]\\ 
      \frac{2}{3}\frac{y}{H^2}-\frac{1}{6}x-\frac{1}{6}\sqrt{\left(x-4\frac{y}{H^2})^2-60(\frac{y}{H^2}\right)^2} & \text{if }\frac{H^2x}{y}\in[-\frac{97}{10},-8)\cup(-\frac{15}{2},-\frac{11}{2}]\\
      \frac{3}{8}\frac{y}{H^2}-\frac{x}{16}-\frac{1}{16}\sqrt{\left(x-6\frac{y}{H^2}\right)^2-160\left(\frac{y}{H^2}\right)^2} & \text{if } \frac{H^2x}{y}\in[-\frac{193}{14},-12)\cup(-\frac{35}{3},-\frac{97}{10}]\\
      \frac{4}{15}\frac{y}{H^2}-\frac{x}{30}-\frac{1}{30}\sqrt{\left(x-8\frac{y}{H^2}\right)^2-300\left(\frac{y}{H^2}\right)^2} & \text{if } \frac{H^2x}{y}\in [-\frac{107}{6},-16)\cup(-\frac{63}{4},-\frac{193}{14}]\\
      -\frac{y^2}{16x} & \text{if } \frac{H^2x}{y}\in[-4n,\frac{1-4n^2}{n}]\text{ and }n\in\mathbb{Z}_{>0} \\
      x+\frac{y^2}{16x} & \text{if } \frac{H^2x}{y}\in[\frac{4n^2-1}{n},4n]\text{ and }n\in\mathbb{Z}_{>0}
    \end{cases}       
\end{equation}

\begin{lemma}\label{lem:con}
Let $O=(0,0)$ be the origin, and let $P=(x_p,y_p)$ and $Q=(x_q,y_q)$ be two points on the upper half plane such that $\frac{x_p}{y_p}>\frac{x_q}{y_q}$ and $y_p<y_q$. Consider a sequence of points $P_0=O,P_1,...,P_n=Q$ on the upper half plane and inside the triangle $OPQ$. In addition, assume that the points $P_0,...,P_n$ form a convex polygon. If we consider the sum
$$\sum_{i=0}^{n-1}\spadesuit(\overrightarrow{P_{i}P_{i+1}}),$$
it can achieve its maximum only when $n=1$ or $n=2$. When $n=2$, we can choose $P_1$ on $OP$ or $PQ$, unless $\frac{x_1}{y_1}=m$, or $\frac{4m^2-1}{m}$; or $\frac{x_q-x_1}{y_q-y_1}=m$ or $\frac{4m^2-1}{m}$ for some nonzero integer $m$.
\end{lemma}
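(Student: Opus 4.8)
The plan is to exploit two structural features of $\spadesuit$: it is positively homogeneous of degree $1$, and it is convex on each of the regions in its definition. Together with the convexity of the feasible set of chains, these reduce the maximization to a boundary/vertex problem.

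First I would record the homogeneity. Every region is cut out by a condition on the ratio $\frac{H^2x}{y}$, which is invariant under $(x,y)\mapsto(\lambda x,\lambda y)$ for $\lambda>0$, and each branch formula is homogeneous of degree $1$; hence $\spadesuit(\lambda v)=\lambda\,\spadesuit(v)$ for $\lambda>0$. Two consequences are used throughout: subdividing an edge $\overrightarrow{P_iP_{i+1}}$ by collinear interior points leaves its contribution to $\sum\spadesuit$ unchanged, so we may merge consecutive collinear vertices freely and assume successive edges have distinct directions; and the sum becomes a function of the interior vertices $P_1,\dots,P_{n-1}$ alone, each depending affinely on the edge vectors into which it enters.

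The analytic heart is the claim that each branch of $\spadesuit$ is convex on its region. I would check this by type. The branches of the form $x+\frac{5y^2}{8(H^2x+2y)}$, $\frac{5y^2}{H^2(2y-H^2x)}$, $-\frac{y^2}{16x}$ and $x+\frac{y^2}{16x}$ are quadratic-over-linear expressions with the linear denominator positive on the relevant region, hence convex as perspectives of a convex function. The branch $\frac{x}{2}+\frac12\sqrt{x^2+20(y/H^2)^2}$ is a Euclidean norm plus a linear term, so convex. The branches carrying a term $-\frac16\sqrt{(4y/H^2+x)^2-60(y/H^2)^2}$, $-\frac1{16}\sqrt{(x-6y/H^2)^2-160(y/H^2)^2}$ and $-\frac1{30}\sqrt{(x-8y/H^2)^2-300(y/H^2)^2}$ are convex because $\sqrt{A^2-B^2}$ is concave on the cone $A>|B|$, so its negative is convex, and the remaining terms are linear. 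Thus $\spadesuit$ is convex and $1$-homogeneous on each region, while across region boundaries — located exactly at the slopes $\frac{H^2x}{y}\in\{4m,\frac{4m^2-1}{m}\}$ and their negatives — convexity may genuinely fail; this is the source of the exceptional cases in the statement. With the neighbours of an interior vertex $P_i$ fixed and $P_i$ restricted so that both incident edges stay in fixed regions, the map $P_i\mapsto\spadesuit(\overrightarrow{P_{i-1}P_i})+\spadesuit(\overrightarrow{P_iP_{i+1}})$ is then a sum of convex functions of vectors affine in $P_i$, hence convex. The admissible locus of $P_i$ — cut out by the triangle $OPQ$, the convex-chain inequalities (which are linear once $y>0$ is imposed), and the region-boundary lines — is convex, so the maximum is attained at one of its vertices. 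Each such vertex forces one of: $P_{i-1},P_i,P_{i+1}$ collinear (so $P_i$ is redundant and $n$ drops after merging), $P_i$ on a side of $OPQ$, or $P_i$ on a region boundary. Iterating and merging collinear vertices leaves every surviving interior vertex on $\partial(OPQ)$: a vertex on $\overline{OQ}$ is collinear with $O$ and $Q$ and disappears, giving $n=1$, while vertices on $OP$ (resp.\ $PQ$) are collinear with $O,P$ (resp.\ $P,Q$) and all merge to a single point on that side, giving $n=2$ with $P_1\in OP\cup PQ$.

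Finally, the main obstacle. The crux is the uniform convexity check across all nine branches, in particular the hyperbolic $-\sqrt{A^2-B^2}$ branches and the confirmation that the slopes $\frac{H^2x}{y}\in\{4m,\frac{4m^2-1}{m}\}$ are precisely where convexity can break down, since the vertex-reduction of the previous step is valid only on regions where $\spadesuit$ is genuinely convex. The second difficulty is the degenerate terminal configurations: when the optimal $P_1$ lands on a region boundary, so that $\frac{x_1}{y_1}$ or $\frac{x_p-x_1}{y_p-y_1}$ equals $m$ or $\frac{4m^2-1}{m}$ for some nonzero integer $m$, the edge passes through a point where $\spadesuit$ switches branches and the ``$P_1$ on $OP$ or $PQ$'' normal form need not survive the reduction; these are exactly the exclusions flagged in the statement, and handling them requires treating the boundary slopes separately rather than through the convexity argument.
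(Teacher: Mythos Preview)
Your approach is essentially the paper's, repackaged. Both rest on the same two facts: $\spadesuit$ is positively homogeneous of degree $1$, and (branch-by-branch) it satisfies a subadditivity/convexity property. The paper phrases this as the weak triangular inequality $\spadesuit(\overrightarrow{OA})+\spadesuit(\overrightarrow{AB})\geq\spadesuit(\overrightarrow{OB})$ and then extends $OP_1$ (or $QP_1$) to the side of $OPQ$, using linearity along rays plus the triangular inequality on the small residual triangle; you phrase it as ``convex function on a polytope attains its maximum at a vertex''. These are the same geometric move, and your verification of convexity on each branch (perspective functions, norms, and the concavity of $\sqrt{A^2-B^2}$ on the forward light cone) makes explicit what the paper leaves as ``by calculating the derivative''.

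One imprecision to fix: you write that the region boundaries are ``located exactly at the slopes $\frac{H^2x}{y}\in\{4m,\frac{4m^2-1}{m}\}$''. That is not so; there are further branch boundaries at $\pm\frac14,\pm\frac12,\pm\frac{11}{2},\pm\frac{97}{10},-\frac{193}{14},\dots$ separating the explicit branches from one another. The point is that at these non-exceptional boundaries $\spadesuit$ is \emph{continuous} (and the two one-sided branches agree), so when your vertex iteration lands $P_i$ on such a line you may switch to the adjacent branch and continue pushing---this is exactly how the paper disposes of them. By contrast, at the slopes $4m$ and $\frac{4m^2-1}{m}$ the function genuinely jumps (for instance, at $\frac{H^2x}{y}=4$ the two adjacent formulas give different values), so the reduction can terminate there; these are precisely the exclusions in the statement. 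Your final paragraph already anticipates this, but the argument only goes through once you separate ``branch boundaries where $\spadesuit$ glues continuously'' from ``the exceptional slopes $4m,\frac{4m^2-1}{m}$ where it does not'', rather than conflating the two.
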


\begin{proof}
Here we use the trick in \cite[Section 2.2]{feyzbakhsh2018higher}. The first part is basically the same as in \cite[Lemma 4.11]{li2019stability}, because the essence of the proof is that in all the cases, the function is homogeneous of degree 1. This implies that we can reduce to the case $n\leq 2$. Now we consider a triangle $OAB$, with slope $\frac{x(A)}{y(A)}>\frac{x(B)}{y(B)}$, such that slopes of $\overrightarrow{OA}, \overrightarrow{OB}$, or slopes of $\overrightarrow{AB}, \overrightarrow{OB}$ fall in the same region. By calculating the derivative, we get a weak triangular inequality, that is to say $$\spadesuit(\overrightarrow{OA})+\spadesuit(\overrightarrow{AB})\geq\spadesuit(\overrightarrow{OB}).$$
Now we consider the triangle $OP'Q$ inside the triangle $OPQ$. Then by extending the line $OP'$ or $QP'$, we get a new small triangle. As the function $\spadesuit$ is linear when the slope is fixed, we can just consider the new small triangle. Then by the weak triangular inequality, we get the conclusion. The only thing that needs to be proven is the case in which the changing point is not equal to $\frac{4m^2-1}{m}$ or $4m$. Let $P'$ be a point that does not coincide with $P$ and the slope of $OP'$ (or $P'Q$) is at the changing point not equal to $\frac{4m^2-1}{m}$ or $4m$. Then the value $\spadesuit(\overrightarrow{OP'})+\spadesuit(\overrightarrow{P'Q})$ achieves the same value when the sum is calculated by the functions in different regions. Also by the weak triangular inequality, we get the conclusion, and thus we finish the proof.
\end{proof}

With the above lemma, we can give a Clifford type inequality for the curve $C=X_{2,2,2,4}$. In the next proof, we make a change of the coordinate of $\spadesuit$ to be $(x',y')=(x,\frac{y}{H^2})$. By abuse of notation, we still use $\spadesuit$ to denote it.

\begin{theorem}\label{thm:main4}
Let $F$ be a semistable vector bundle on $C$ of rank $r$, degree $d$, and slope $\mu=\frac{d}{r}$. Then we have the following inequality (Figure \ref{graph:Clifford}):
\begin{equation}
    h^0(F)\leq
    \begin{cases}
      \frac{64r^2}{64r-d} & \text{if }
      \mu\in[0,\frac{256}{3}-\frac{32\sqrt{61}}{3})\\
      r+\frac{5d^2}{1024r} & \text{if } \mu\in[\frac{256}{3}-\frac{32\sqrt{61}}{3},16]\\
      \frac{5d^2}{1024r}+5r-\frac{d}{8} & \text{if } \mu\in[48,\frac{576-32\sqrt{69}}{5}]\\
      d-46r & \text{if } \mu \in[\frac{576-32\sqrt{69}}{5},64]
    \end{cases}
\end{equation}
\end{theorem}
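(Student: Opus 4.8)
The plan is to transfer the question from the curve $C$ to the K3 surface $S$ and read it off from the wall-crossing behaviour of $\imath_*F$. First, by adjunction and Grothendieck--Riemann--Roch,
$$h^0(F)=\hom(\cO_C,F)=\hom(\cO_S,\imath_*F),\qquad \ch(\imath_*F)=(0,4rH,d-64r),$$
so everything is controlled by the single torsion class of $\imath_*F$, whose slope is $\frac{H^2\ch_2}{H\ch_1}=\frac{\mu}{4}-16<0$ for $\mu\in[0,64)$. The whole argument then reduces to bounding $\hom(\cO_S,\imath_*F)$ using the position of the first wall of $\imath_*F$ supplied by Lemma~\ref{lem:walb}.

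For the first range $\mu\in[0,\frac{256}{3}-\frac{32\sqrt{61}}{3})$, Lemma~\ref{lem:walb} tells us $\imath_*F$ is Brill--Noether semistable, so I would run the spherical-twist argument of Proposition~\ref{prop:boundsbnobj} directly: set $\tilde F=\Cone(\Hom(\cO_S,\imath_*F)\otimes\cO_S\xrightarrow{\mathrm{ev}}\imath_*F)$, which is $\sigma_{\alpha,\beta}$-semistable for $\beta>0$ with $pr(\tilde F)$ below $\Gamma$. Since $\rk(\tilde F)=-h^0(F)$, $H\ch_1(\tilde F)=32r$ and $\ch_2(\tilde F)=d-64r$, the Bogomolov inequality $\overline{\Delta}_H(\tilde F)\geq0$ rearranges at once to $h^0(F)\leq\frac{64r^2}{64r-d}$, the first case.

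For the remaining intervals $\imath_*F$ is no longer Brill--Noether semistable, and the strategy is the wall-crossing/convexity scheme sketched in the introduction. I would take the Brill--Noether HN filtration of $\imath_*F$, with Brill--Noether semistable factors $E_1,\dots,E_n$; subadditivity of $\hom(\cO_S,-)$ along the filtration, together with $\sum_i\rk(E_i)=\rk(\imath_*F)=0$ and the per-factor bound $\hom(\cO_S,E_i)\leq\rk(E_i)+\spadesuit\!\left(\ch_2(E_i),H\ch_1(E_i)/H^2\right)$ from Proposition~\ref{prop:boundsbnobj}, gives
$$h^0(F)\leq\sum_{i=1}^{n}\spadesuit\bigl(\overrightarrow{P_{i-1}P_i}\bigr),$$
where $P_0=O,\dots,P_n=(d-64r,4r)$ are the partial sums of the planar classes $\left(\ch_2(E_i),H\ch_1(E_i)/H^2\right)$. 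Because each $pr(E_i)$ lies below the convex curve $\Gamma$ (Proposition~\ref{prop:gamma}) and, by Bertram's nested wall theorem (Lemma~\ref{lem:wall}), no factor can cross the first wall of $\imath_*F$, the points $P_i$ form a convex polygon confined to the triangle whose apex is read off from the wall's $\Gamma$-endpoints $\beta_1=\frac{\mu}{32}-4$ and $\beta_2=\frac{\mu}{32}$.

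Finally I would invoke the convexity Lemma~\ref{lem:con}: the sum $\sum_i\spadesuit(\overrightarrow{P_{i-1}P_i})$ is maximised with at most two factors, the intermediate vertex sitting at a wall--$\Gamma$ intersection. It then remains to evaluate $\spadesuit$ at these vertices on each of the three remaining ranges, reading off the correct branch (equivalently, the correct local form of $\Gamma$, e.g.\ $\Gamma(x)=5x^2-1$ near $\beta_2=\frac{\mu}{32}$), and to simplify; this produces $r+\frac{5d^2}{1024r}$, then $\frac{5d^2}{1024r}+5r-\frac{d}{8}$, and finally $d-46r$. I expect the bookkeeping of this last step to be the main obstacle: one must pin down the correct triangle and verify that the factors lie inside it using the precise wall location—including the exceptional intervals $\mu\in[31,32],[32,33],[63,64]$ of Lemma~\ref{lem:walb}, the last of which meets the range of the fourth case—and match each $\mu$-subinterval to the right piecewise branch of $\spadesuit$, since the slope $\frac{\mu}{4}-16$ sweeps across several branch boundaries as $\mu$ runs over $[0,64]$.
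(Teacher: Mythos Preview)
Your proposal is correct and follows essentially the same route as the paper: the Brill--Noether semistable case via the spherical twist and Bogomolov inequality, then the remaining ranges via the Brill--Noether HN filtration, Proposition~\ref{prop:boundsbnobj}, and the convexity Lemma~\ref{lem:con} to reduce to at most two factors and an explicit evaluation at the apex determined by the wall endpoints $\beta_1,\beta_2$. The only organizational difference is that the paper treats the last two cases together on $\mu\in[48,64]$ by computing two candidate maxima (one from the apex $P$ on the $\Gamma$-intersection, one from an apex $P'''$ with slopes $\pm\tfrac{1}{8}$) and taking $\max\{d-46r,\ \tfrac{5d^2}{1024r}+5r-\tfrac{d}{8}\}$, with the crossover at $\tfrac{576-32\sqrt{69}}{5}$.
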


\begin{figure}
\centering
\begin{tikzpicture}
[
    dot/.style={
        draw=black,
        fill=blue!90,
        circle,
        minimum size=3pt,
        inner sep=0pt,
        solid,
    },
]
\begin{axis}[
    axis lines = center,
    xlabel = \(\mu\),
    ylabel = {\(\frac{h^0(F)}{r}\)},
    ymin = 0.5,
]

\addplot [
    domain=0:256/3-32*sqrt{61}/3, 
    samples=100, 
    color=blue,
    name path=gamma
    ]
    {64/(64-x)};

\addplot [
    domain=256/3-32*sqrt{61}/3:16, 
    samples=100, 
    color=blue,
    name path=gammap1
    ]
    {1+5/1024*x^2};

\end{axis}
\end{tikzpicture}
\caption{The Clifford type inequality for the curve $C$} \label{graph:Clifford}
\end{figure}
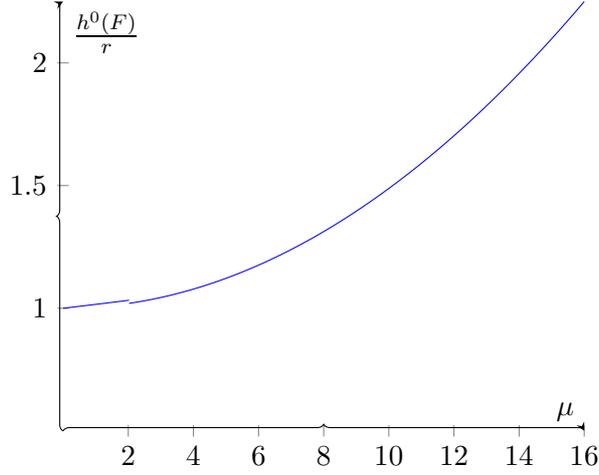

\begin{proof}
If the object $\imath_*F$ is Brill--Noether semistable, then by the Bogomolov inequality, with the same argument as in Proposition \ref{prop:boundsbnobj} , we have 
$$h^0(F)\leq \rk(\imath_*F)-\frac{(H\ch_1(\imath_*F))^2}{2H^2\ch_2(\imath_*F)}= \frac{64r^2}{64r-d}.$$ 
In particular, by Lemma \ref{lem:walb}, the bound holds for $\mu\in[0,\frac{256}{3}-\frac{32\sqrt{61}}{3})$. We may assume that there is a wall as that in Lemma \ref{lem:walb} for $\imath_*F$ for the rest of the argument.

For $\mu\in[\frac{256}{3}-\frac{32\sqrt{61}}{3},16]$, by Proposition \ref{lem:walb}, we know that the wall is inside the range $\frac{\mu}{32}$ and $\frac{\mu}{32}-4$. Let $m$ be the number of  HN factors of $\imath_*F$ with respect to the Brill--Noether stability condition as that in Proposition  \ref{prop:boundsbnobj}. Then we have:
\begin{align*}
h^0(F)&=\hom(O_S,\imath_*F)\leq\sum_{i=0}^{m-1}\hom(O_S,F_{i+1}/F_{i})\\
&\leq\sum_{i=0}^{m-1}\rk(F_{i+1}/F_i)+\spadesuit(\ch_2(F_{i+1}/F_i),H\ch_1(F_{i+1}/F_i)/H^2)\\
&=\sum_{i=0}^{m-1}\spadesuit(\ch_2(F_{i+1}/F_i),H\ch_1(F_{i+1}/F_i)/H^2).
\end{align*}
On the other hand, by Lemma \ref{lem:con}, we can take the number of HN factors to be less than or equal to $2$. By the explanation at the beginning for the case that $\imath_*F$ is BN-semistable, we may assume that the number of the HN factors is $2$. Let $Q=(d-64r,4r)$. Let $P$ be the point such that the slope of $PQ$ is
$$\frac{\frac{\mu}{32}-4}{\Gamma(\frac{\mu}{32}-4)}=\frac{\frac{\mu}{32}-4}{\frac{H^2}{2}(\frac{\mu}{32}-4)^2-1+(\frac{\mu}{32})^2}$$ 
and the slope of $OP$ is
$$\frac{\frac{\mu}{32}}{\Gamma(\frac{\mu}{32})}=\frac{\frac{\mu}{32}}{\frac{H^2}{2}(\frac{\mu}{32})^2-1+(\frac{\mu}{32})^2}=\frac{\frac{\mu}{32}}{5(\frac{\mu}{32})^2-1}.$$ 
By a direct calculation, the point $P$ has the coordinate $(\frac{5d^2}{1024r}-r,\frac{d}{32}).$ Inside this region, $\left(\frac{H^2\ch_2}{H\ch_1}\right)^-(\imath_*(F))\geq-\frac{63}{4}$. Also, we notice that in each case we have 
$$\hom(O_S,F)\leq \rk(F)+\frac{\ch_2(F)}{2}+\frac{1}{2}\sqrt{\ch_2(F)^2+20(H\ch_1)^2}.$$ 
Then, by Lemma \ref{lem:con}, we get $h^0(F)\leq f_1+f_2$, where 
$$f_1=\frac{5d^2}{2048}-\frac{r}{2}+\frac{1}{2}\left(\frac{5d^2}{1024r}+r\right)=\frac{5d^2}{1024r}$$
and
$$f_2=\frac{4}{15}\left(4r-\frac{d}{32}\right)-\frac{1}{30}\left(d-64r-\frac{5d^2}{1024r}+r\right)-\frac{\sqrt{\Delta}}{30},$$
where
$$\Delta=\left(d-64r-\frac{5d^2}{1024r}+r-32r+\frac{d}{4}\right)^2-300\left(4r-\frac{d}{32}\right)^2.$$
By a direct calculation, we get $\sqrt{\Delta}=65r-\frac{5}{4}d+\frac{5d^2}{1024r}$, $f_2=r$ and thus we get the second inequality.

When $\mu\in[48,64]$, we let $Q=(d-64r,4r)$ and let $P$ be the point satisfying that the slope of $OP$ is
$$\frac{\frac{\mu}{32}}{\Gamma(\frac{\mu}{32})}=\frac{\frac{\mu}{32}}{\frac{H^2}{2}(\frac{\mu}{32})^2-1+(\frac{\mu}{32}-2)^2}=\frac{\frac{\mu}{32}}{4(\frac{\mu}{32})^2-1+(\frac{\mu}{32}-2)^2}$$
and the slope of $PQ$ is
$$\frac{\frac{\mu}{32}-4}{\Gamma(\frac{\mu}{32}-4)}=\frac{\frac{\mu}{32}-4}{\frac{H^2}{2}(\frac{\mu}{32}-4)^2-1+(\frac{\mu}{32}-2)^2}.$$ By  a direct calculation, we get
$$P=\left(\frac{5d^2}{1024r}-\frac{d}{8}+3r,\frac{d}{32}\right).$$
Thus we have

\begin{align*}
\spadesuit(\overrightarrow{OP})&=\frac{d}{48}+\frac{7}{6}\left(\frac{5d^2}{1024r}-\frac{d}{8}+3r\right)-\frac{1}{6}\sqrt{\left(\frac{d}{8}+\frac{5d^2}{1024r}-\frac{d}{8}+3r\right)^2-60\left(\frac{d}{32}\right)^2}\\
&=\frac{d}{48}+\frac{7}{6}\left(\frac{5d^2}{1024r}-\frac{d}{8}+3r\right)+\frac{1}{6}(3r-\frac{5d^2}{1024r})=\frac{5d^2}{1024r}+4r-\frac{d}{8}\;\; \text{and}   
\end{align*}
$$\spadesuit(\overrightarrow{PQ})=\frac{8}{3}r-\frac{d}{48}-\frac{d}{6}+\frac{32}{3}r+\frac{5d^2}{6\times 1024r}-\frac{d}{48}+\frac{r}{2}-\frac{1}{6}\sqrt{\Delta'},$$
where 
$$\Delta'=\left(d-64r-\frac{5d^2}{1024r}+\frac{d}{8}-3r-4\left(4r-\frac{d}{32}\right)\right)^2-60\left(4r-\frac{d}{32}\right)^2.$$
By a direct calculation, we get $\sqrt{\Delta'}=\frac{5d^2}{1024r}-\frac{5d}{4}+77r$ and thus $\spadesuit(\overrightarrow{OP})+\spadesuit(\overrightarrow{
PQ})=\frac{5d^2}{1024r}+5r-\frac{d}{8}$.
On the other hand, we consider now a point $P'$ such that the slope of $OP'$ is 
$$\frac{2}{\Gamma(2)-(\mu-64)}=\frac{2}{\mu-48}$$
and the slope of $P'Q$ is
$$\frac{-2}{\Gamma(2)}=-\frac{1}{8}.$$
By a direct computation, we get $P'=(d-48r,2r)$.
We also consider a point $P''$ on the line $OP$ that the line $PQ$ has slope $-\frac{1}{8}$. We note that both $P'$ and $P''$ are inside $OP'''Q$, where $P'''$ is a point such that the slope of $OP'''$ is $\frac{1}{8}$ and the slope of $P'''Q$ is $-\frac{1}{8}$.
Accordingly, we get $P'''=(\frac{d}{2}-16r,\frac{d}{16}-2r)$ and thus $\spadesuit(\overrightarrow{OP'''})=d-47r$ and $\spadesuit(\overrightarrow{P'''Q})=r$ and so $\spadesuit(\overrightarrow{OP'''})+\spadesuit(\overrightarrow{P'''Q})\leq d-46r$. By Lemma \ref{lem:con}, we get the last two cases by considering $\max\{d-46r, \frac{5d^2}{1024r}+5r-\frac{d}{8}\}.$
\end{proof}

\begin{remark}
For the Brill--Noether semistable region, one can get a better Clifford type inequality by applying Lemma \ref{prop:boundsbnobj}. One can do a more careful argument to make the break point more precise, and thus get a better Clifford type inequality. But the bound above is enough for our purpose.
\end{remark}

\section{Bogomolov--Gieseker Type Inequality on $X_{2,2,4}$ and $X_{2,4}$}

Now we give a Bogomolov--Gieseker type inequality for $\ch_2$ on $S'=X_{2,2,4}$. By Hirzebruch--Riemann--Roch, we get
$$\chi(E)=\ch_2(E)-H\ch_1(E)+20\ch_0(E)$$

for a coherent sheaf $E$ on $S'$. The following lemma is essential to the calculation.
\begin{lemma}[{\cite[Corollary 4.3]{feyzbakhsh2016stability}}, {\cite[Lemma 5.1]{li2019stability}}]\label{lem:res}
Let $(X,H)$ be a polarized variety of dimension $n=2,3$. Let $E$ be a coherent sheaf in $\mathrm{Coh}^{0,H}(X)$. Suppose there exists $\alpha>0$ and $m\in\mathbb{Z}_{>0}$ such that 
\begin{enumerate}
    \item $E(-mH)[1]$ is in $\mathrm{Coh}^{0,H}(X)$;
    \item both $E$ and $E(-mH)[1]$ are $\nu_{\alpha,0,H}$-tilt stable;
    \item $\nu_{\alpha,0,H}(E)=\nu_{\alpha,0,H}(E(-mH)[1]).$
\end{enumerate}
Then for a generic smooth projective irreducible subvariety $Y\in |mH|$, the restriction $E|_Y$ is $\mu_{H_Y}$-semistable. Moreover, $\rk(E)=\rk(E|_Y)$, $H^{n-2}_Y\ch_1(E|_Y)=mH^{n-1}\ch_1(E)$, and when $n=3$, $\ch_2(E|_Y)=mH\ch_2(E).$
\end{lemma}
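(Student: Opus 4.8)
The plan is to follow Feyzbakhsh's strategy of \emph{detecting} slope stability on $Y$ through tilt stability on $X$, which is exactly what lets one avoid any asymptotic (Mehta--Ramanathan type) restriction theorem. Write $\iota\colon Y\hookrightarrow X$ for the inclusion of a divisor $Y\in|mH|$. Since $Y$ is taken generic, I may assume it is smooth and irreducible, that $\Tor_1^X(E,\cO_Y)=0$, and that $E|_Y$ is a torsion-free sheaf on $Y$; each of these holds away from a proper closed subset of $|mH|$, so this is what the genericity hypothesis is really buying. Tensoring the structure sequence $0\to\cO_X(-mH)\to\cO_X\to\cO_Y\to0$ with $E$ then yields an honest short exact sequence of sheaves $0\to E(-mH)\to E\to\iota_*(E|_Y)\to0$ on $X$.

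First I would observe that $\iota_*(E|_Y)$ is a torsion sheaf, hence lies in $\mathcal T_{0,H}\subset\Coh^{0,H}(X)$. Rotating the triangle above gives $E\to\iota_*(E|_Y)\to E(-mH)[1]\xrightarrow{+1}$, and by hypothesis (1) all three vertices lie in the heart $\Coh^{0,H}(X)$, so this is a short exact sequence there. By (2) both $E$ and $E(-mH)[1]$ are $\nu_{\alpha,0,H}$-tilt stable, and by (3) they have equal tilt slope; since $\alpha>0$ the $\nu_{\alpha,0,H}$-stability admits Harder--Narasimhan filtrations, so an extension of two semistable objects of the same slope is semistable, and therefore $\iota_*(E|_Y)$ is $\nu_{\alpha,0,H}$-tilt semistable of that common slope.

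The heart of the argument is a Grothendieck--Riemann--Roch computation, using the normal bundle $N_{Y/X}=\cO_Y(mH)$ and the Todd twist $\td(\cO_Y(mH))^{-1}$, showing that for any coherent sheaf $G$ on $Y$ with $\rk(G)>0$ one has
\[
\nu_{\alpha,0,H}(\iota_*G)=\frac{1}{mH^n}\,\mu_{H_Y}(G)-\frac{m}{2},\qquad H_Y:=H|_Y,
\]
i.e.\ the tilt slope on $X$ of a pushed-forward sheaf is a \emph{strictly increasing affine} function of its $H_Y$-slope on $Y$ (note $\alpha$ drops out because $\ch_0(\iota_*G)=0$). With this in hand, suppose $E|_Y$ were not $\mu_{H_Y}$-semistable and choose a quotient sheaf $E|_Y\twoheadrightarrow Q$ on $Y$ with $\mu_{H_Y}(Q)<\mu_{H_Y}(E|_Y)$. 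Applying the exact functor $\iota_*$ produces a surjection $\iota_*(E|_Y)\twoheadrightarrow\iota_*Q$ of torsion sheaves, whose kernel is again torsion and hence in the heart, so it is an epimorphism in $\Coh^{0,H}(X)$; the displayed formula then gives $\nu_{\alpha,0,H}(\iota_*Q)<\nu_{\alpha,0,H}(\iota_*(E|_Y))$, contradicting the tilt semistability established above. Hence $E|_Y$ is $\mu_{H_Y}$-semistable.

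Finally, the numerical identities fall out of the same short exact sequence together with GRR for $\iota_*$. Comparing $\ch_1$ gives $\ch_1(\iota_*(E|_Y))=mH\rk(E)$, which against the class $\rk(E|_Y)\,[Y]=\rk(E|_Y)\,mH$ yields $\rk(E|_Y)=\rk(E)$; comparing $H^{n-2}\ch_2$ then gives $H_Y^{n-2}\ch_1(E|_Y)=mH^{n-1}\ch_1(E)$, and for $n=3$ comparing the degree-$3$ parts gives $\ch_2(E|_Y)=mH\ch_2(E)$. I expect the main obstacle to be twofold: first, verifying the affine slope identity cleanly while keeping track of the normal-bundle Todd twist in GRR for a divisor; and second, the categorical point that a destabilizing quotient of the \emph{sheaf} $E|_Y$ on $Y$ really remains a legitimate destabilizing quotient of the \emph{object} $\iota_*(E|_Y)$ inside the tilted heart $\Coh^{0,H}(X)$ (which is why one checks that the kernel stays torsion). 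Everything else is routine once the short exact sequence and the slope dictionary are in place.
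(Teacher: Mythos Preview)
The paper does not give its own proof of this lemma; it simply cites \cite[Proposition 3.4(a)]{feyzbakhsh2016stability} and \cite[Lemma 5.1]{li2019stability} and then immediately applies the result in Proposition~\ref{prop:main5}. Your proposal correctly reproduces the argument from those references: the short exact sequence $0\to E(-mH)\to E\to\iota_*(E|_Y)\to 0$ becomes a short exact sequence in $\Coh^{0,H}(X)$ exhibiting $\iota_*(E|_Y)$ as an extension of two tilt-stable objects of equal slope, hence tilt-semistable; and the GRR identity $\nu_{\alpha,0,H}(\iota_*G)=\tfrac{1}{mH^n}\mu_{H_Y}(G)-\tfrac{m}{2}$ then converts any slope-destabilising quotient on $Y$ into a tilt-destabilising quotient on $X$. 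This is precisely Feyzbakhsh's restriction argument, so there is nothing to compare against in the present paper.
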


\begin{proposition}\label{prop:main5}
Suppose $F\in D^b(S')$, with $\frac{H\ch_1(F)}{H^2\rk(F)}\in(0,1)$, is $\nu_{\alpha,0,H}$-semistable or $\nu_{\alpha',1,H}$-semistable for some $\alpha>0$ or $\alpha'>\frac{1}{2}$, then we have the following Bogomolov--Gieseker type inequality (Figure \ref{graph:BG}):

\begin{equation}\label{inq}
    \frac{\ch_2(F)}{H^2\ch_0(F)}\leq
    \begin{cases}
      \left(\frac{H\ch_1(F)}{H^2\ch_0(F)}\right)^2-\frac{H\ch_1(F)}{H^2\ch_0(F)} & \text{if } \frac{H\ch_1(F)}{H^2\ch_0(F)}\in(0,\frac{4}{3}-\frac{\sqrt{13}}{3}]\\
      \frac{5}{8}\left(\frac{H\ch_1(F)}{H^2\ch_0(F)}\right)^2-\frac{1}{8} & \text{if }\frac{H\ch_1(F)}{H^2\ch_0(F)}\in(\frac{4}{3}-\frac{\sqrt{13}}{3},\frac{1}{2}]\\
      \frac{5}{8}\left(\frac{H\ch_1(F)}{H^2\ch_0(F)}\right)^2-\frac{1}{4}\frac{H\ch_1(F)}{H^2\ch_0(F)} & \text{if } \frac{H\ch_1(F)}{H^2\ch_0(F)}\in(\frac{1}{2},\frac{\sqrt{13}}{3}-\frac{1}{3})\\
      \left(\frac{H\ch_1(F)}{H^2\ch_0(F)}\right)^2-\frac{1}{2} & \text{if }\frac{H\ch_1(F)}{H^2\ch_0(F)}\in[\frac{\sqrt{13}}{3}-\frac{1}{3},1).
    \end{cases}
\end{equation}
\end{proposition}
\begin{proof}
The proof here is similar to \cite{li2019stability}. We prove this by contradiction. The idea is that first to reduce to stable objects by considering Jordan--H\"older factors for $\nu_{\alpha,0,H}$ or $\nu_{\alpha',-1,H}$. Next, by using the Feyzbakhsh's restriction theorem (Lemma \ref{lem:res}) and the Clifford type inequality in Section 4 (Theorem \ref{thm:main4}), we get inequalities as in \ref{inq} for the stable objects, and thus get the result.

\textbf{Reduce to stable objects:}   Suppose there is a $\nu_{\alpha,0,H}$ or $\nu_{\alpha',1,H}$-tilt stable object with $\frac{H\ch_1(F)}{H^2\rk(F)}\in(0,1)$ violating the above inequality. We assume that $F$ is an object with the minimal $\overline{\Delta}_H$ of all such objects. Suppose $F$ becomes strictly $\nu_{\alpha,0,H}$-semistable for some $\alpha>0$ (or $\nu_{\alpha',1,H}$ strictly semistable), then we take a Jordan--H\"older filtration. Since the inequality forms a convex curve in $(0,\frac{1}{2}]$ and $[\frac{1}{2},1)$ separately, there is at least one Jordan--H\"older factor violating the inequality. For example, if $\frac{H\ch_1(F)}{H^2\rk(F)}\in\left(0,\frac{1}{2}\right]$ and we are considering $\nu_{\alpha,0,H}$, then the line passing through $(\alpha,0)$ and $p_H(F)$ has the $\left(0,\frac{H\ch_1(F)}{H^2\rk(F)}\right]$ segment completely above the curve of the proposition. Thus $F$ must have a Jordan--H\"older factor $F_i$ that violates the inequality. Similarly we can show it in the remaining cases. Finally, by Lemma \ref{lem:dis}, we have $\overline{\Delta}_H(F_i)<\overline{\Delta}_H(F)$, which contradicts the minimum assumption.

Now suppose that $F$ becomes strictly $\nu_{\alpha_0,\beta,H}$-semistable on the vertical wall for $\beta=\frac{H\ch_1(F)}{H^2\rk(F)}$. We may assume that $F\in \mathrm{Coh}^{\beta,H}(S')$. If all the Jordan--H\"older factors of $F$ are torsion-free, then there is a $\nu_{\alpha,\beta}$-stable Jordan--H\"older factor $F_i$ of $F$ that has $p_H(F)=p_H(F_i)$. By Lemma \ref{lem:dis}, we have $\overline{\Delta}_H(F)=\overline{\Delta}_H(F_i)$. Also by the openess of the stability conditions, $F_i$ is $\nu_{\alpha,0}$ and $\nu_{\alpha',1}$-stable for $\alpha$ and $\alpha'$ large enough. If $F$ has a torsion Jordan--H\"older factor $F_0$, then the torsion factors must have $\ch_2(F_0)\geq0$. Since other Jordan--H\"older factors must have $0>\rk(F_i)\geq \rk(F)$ as we assume that $F\in \mathrm{Coh}^{\beta,H}(S')$, we must have some factor $F_i$ such that $\frac{H\ch_1(F)}{H^2\ch_0(F)}=\frac{H\ch_1(F_i)}{H^2\ch_0(F_i)}$ and $\frac{\ch_2(F)}{H^2\ch_0(F)}\leq\frac{\ch_2(F_i)}{H^2\ch_0(F_i)}$. Again, by the openness of the stability conditions, $F_i$ is $\nu_{\alpha,0}$ and $\nu_{\alpha',1}$-stable for $\alpha$ and $\alpha'$ large enough. Thus we can assume that $F$ is $\nu_{\alpha,0,H}$-tilt stable for all $\alpha>0$, and similarly for $\nu_{\alpha',1,H}$.

\textbf{Inequalities for stable objects:}   To apply the last lemma, we consider the line passing through $p_H(F):=(a,b)$ and $p_H(F(-2H)[1]):=(a-2b+2,b-2)$. This line has the equation 
$$(b-1)Y-X=-a+b^2-b.$$
In the proper region of $(a,b)$, it will intersect at $(\alpha_0,0)$ and $(\alpha_1,-1)$ such that $\alpha_0>0$ and $\alpha_1>\frac{1}{2}$. This is equivalent to saying that $a>b^2-b$ and $a>b^2-\frac{1}{2}$. We see that the bound above satisfies these two conditions. Thus we can apply the last lemma for $F$ and we have $\rk(F|_C)=\rk(F)$ and $\deg(F|_C)=2H\ch_1(F).$

Without loss of generality, we may assume that $\frac{H\ch_1(F)}{H^2\rk(F)}\leq\frac{1}{2}$, otherwise we can take $F^*(H)$, where $*$ denote the dual. Then we have
\begin{align*}
\chi(O_{S'},F)&=\ch_2(F)-\ch_1(F)+20\ch_0(F)\\
&\leq \hom(O_{S'},F)+\hom(O_{S'},F[2])\\
&=\hom(O_{S'},F)+\hom(O_{S'},F^*(2H))\\
&\leq \hom(O_C,F|_C)+\hom(O_C,F^*(2H)|_C)=:\Lambda.  
\end{align*}

Note that in this case, $\mu(F|_C)=\frac{2H\ch_1(F)}{\ch_0(F)}=\frac{32H\ch_1(F)}{H^2\ch_0(F)}\in(0,16].$

From now on, we let $r:=\ch_0(F)$, $\mu=32\frac{H\ch_1(F)}{H^2\ch_0(F)}$ and $d=\mu r$. As in the last section, we have several cases:

\textbf{Case 1} $\frac{H\ch_1(F)}{H^2\ch_0(F)}\in\left(0,\frac{\sqrt{69}-8}{5}\right].$
In this region, we have
$$\Lambda\leq \frac{64r^2}{64r-d}+(64-\mu)r-46r=\frac{64r^2}{64r-d}+18r-\mu r.$$

Therefore
$$\frac{\ch_2(F)}{H^2\ch_0(F)}\leq-\frac{H\ch_1(F)}{H^2\ch_0(F)}+\frac{4}{64-32\frac{H\ch_1(F)}{H^2\ch_0(F)}}-\frac{1}{8}.$$
This satisfies the bound of the proposition.

\textbf{Case 2} $\frac{H\ch_1(F)}{H^2\ch_0(F)}\in\left(\frac{\sqrt{69}-8}{5},\frac{8}{3}-\frac{\sqrt{61}}{3}\right]$. In this region, we have
\begin{align*}
\Lambda&\leq \frac{64r^2}{64r-d}+\frac{5}{1024}(64-\mu)^2r+5r-\frac{1}{8}(64-\mu)r\\
&=\frac{64r^2}{64r-d}+17r-\frac{1}{2}\mu r+\frac{5\mu^2r}{1024}.   
\end{align*}

This implies
$$\frac{\ch_2(F)}{H^2\ch_0(F)}\leq\frac{5}{16}\left(\frac{H\ch_1(F)}{H^2\ch_0(F)}\right)^2+\frac{4}{64-32\frac{H\ch_1(F)}{H^2\ch_0(F)}}-\frac{3}{16}.$$
This satisfies the bound of the proposition.

\textbf{Case 3} $\frac{H\ch_1(F)}{H^2\ch_0(F)}\in\left(\frac{8}{3}-\frac{\sqrt{61}}{3},\frac{1}{2}\right]$. In this region, we are back to the general case.
\begin{align*}
\Lambda&\leq r+\frac{5d^2}{1024r}+\frac{5}{1024}(64-\mu)^2r+5r-\frac{1}{8}(64-\mu)r\\
&=\frac{5}{512}\frac{32\times32\times(H\ch_1(F))^2}{H^2\ch_0(F)H^2}+18r-16\times\frac{H\ch_1(F)}{H^2\ch_0(F)}\times \ch_0(F)\\
&=\frac{5}{8}\frac{(H\ch_1(F))^2}{H^2\ch_0(F)}+18\ch_0(F)-16\frac{H\ch_1(F)}{H^2\ch_0(F)}\ch_0(F).    
\end{align*}

This implies
$$\frac{\ch_2(F)}{H^2\ch_0(F)}\leq\frac{5}{8}\left(\frac{H\ch_1(F)}{H^2\ch_0(F)}\right)^2-\frac{1}{8}.$$
This satisfies the bound of the proposition.

Therefore, if $\frac{H\ch_1(F)}{H^2\rk(F)}\in(0,1)$, then we have the bound of the proposition.
\end{proof}

\begin{corollary}
Let $F$ be a torsion-free $\mu_H$-slope stable sheaf on $S'$. Then the numerical  character of $F$ satisfies the bound of Proposition \ref{prop:main5}.
\end{corollary}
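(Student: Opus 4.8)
The plan is to reduce the statement to Proposition \ref{prop:main5} by passing to the large-volume limit. Since $\mu_H$-slope stability is preserved under tensoring with a line bundle, I first replace $F$ by $F(-nH)$ with $n\in\mathbb{Z}$ chosen so that $\frac{H\ch_1(F(-nH))}{H^2\ch_0(F(-nH))}\in[0,1)$, exactly as in the normalization convention recorded in Proposition \ref{prop:main1}. Because $\overline{\Delta}_H$ is invariant under such twists and $\frac{\ch_2(F)}{H^2\ch_0(F)}=\frac{\ch_2(F(-nH))}{H^2\ch_0(F(-nH))}+n\,\frac{H\ch_1(F)}{H^2\ch_0(F)}-\frac{n^2}{2}$, the inequality asserted for $F$ at slope in $[n,n+1)$ is literally equivalent to the inequality of Proposition \ref{prop:main5} for the normalized sheaf $F(-nH)$. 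Hence it suffices to treat a $\mu_H$-slope stable torsion-free sheaf whose normalized slope lies in $[0,1)$, and note that torsion-freeness forces $\ch_0(F)>0$.

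I would then split into two cases. If the normalized slope equals $0$, then $H\ch_1(F)=0$ and the required bound is $\frac{\ch_2(F)}{H^2\ch_0(F)}\le 0$; this is precisely the classical Bogomolov inequality $\overline{\Delta}_H(F)\ge 0$ for a $\mu_H$-semistable sheaf on the surface $S'$, since $\overline{\Delta}_H(F)=(H\ch_1(F))^2-2H^2\ch_0(F)\ch_2(F)=-2H^2\ch_0(F)\ch_2(F)$. If the normalized slope lies in the open interval $(0,1)$, then $\mu_H(F)>0=\beta$, so $F\in\Coh^{0,H}(S')$, and I invoke the large-volume limit: the tilt slope is $\nu_{\alpha,0,H}(E)=\frac{\ch_2(E)}{H\ch_1(E)}-\alpha\,\frac{H^2\ch_0(E)}{H\ch_1(E)}$, which for $\alpha\to\infty$ is dominated by the term $-\alpha\,\frac{H^2\ch_0}{H\ch_1}$, so any subobject destabilizing $F$ for $\nu_{\alpha,0,H}$ with $\alpha\gg 0$ forces, on its cohomology sheaves, a subsheaf of strictly larger $\mu_H$-slope, contradicting slope stability. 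Thus $F$ is $\nu_{\alpha,0,H}$-tilt semistable for all large $\alpha$, and Proposition \ref{prop:main5} applies directly to yield the stated bound.

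The main obstacle is making the large-volume-limit step rigorous rather than heuristic: I must pass from a destabilizing short exact sequence $0\to A\to F\to B\to 0$ in $\Coh^{0,H}(S')$ to an honest slope-destabilizing subsheaf of $F$. This requires analysing the cohomology sheaves, namely observing that $H^{-1}(A)=0$ because $F$ is a sheaf, controlling the kernel $H^{-1}(B)\in\mathcal{F}_{0,H}$ of the induced map $A\to F$, discarding the infinite-slope pieces with $H\ch_1=0$, and then checking that the comparison of leading $\alpha$-coefficients indeed produces $\mu_H(\,\cdot\,)>\mu_H(F)$ on an actual subsheaf. Everything else—the twisting bookkeeping and the slope-$0$ boundary handled by Bogomolov—is routine.
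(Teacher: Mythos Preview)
Your proposal is correct and follows essentially the same route as the paper: the paper's proof is the single line ``if $F$ is $\mu_H$-slope stable, then it is $\nu_{\alpha,0,H}$-stable for $\alpha$ large enough'', i.e.\ exactly your large-volume-limit reduction to Proposition~\ref{prop:main5}. Your additional bookkeeping (normalizing the slope into $[0,1)$ and handling the boundary case via the classical Bogomolov inequality) is fine but the paper simply leaves it implicit.
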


\begin{proof}
This is because if $F$ is $\mu_H$-slope stable, then it is $\nu_{\alpha,0,H}$-stable for $\alpha$ large enough.
\end{proof}

\begin{corollary}
The bound in Proposition \ref{prop:main5} is also true for $X_{2,4}$, where we replace $\ch_2(F)$, $H\ch_1(F)$ and $H^2\rk(F)$ by $H\ch_2(F)$, $H^2\ch_1(F)$ and $H^3\rk(F)$.
\end{corollary}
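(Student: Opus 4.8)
The plan is to obtain the threefold inequality from Proposition~\ref{prop:main5} by restricting a tilt-semistable object on $X=X_{2,4}$ to a generic smooth surface $S'=X_{2,2,4}\in|2H|$ and invoking Feyzbakhsh's restriction lemma, in exact parallel with the way Proposition~\ref{prop:main5} was itself deduced from the curve $C$. So let $F$ be $\nu_{\alpha,0,H}$-tilt semistable (or $\nu_{\alpha',1,H}$-tilt semistable) on $X$ with $\frac{H^2\ch_1(F)}{H^3\rk(F)}\in(0,1)$, and suppose toward a contradiction that $F$ has minimal $\overline{\Delta}_H$ among objects violating the asserted bound.

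First I would reduce to the case that $F$ is tilt \emph{stable}, by the same minimal-$\overline{\Delta}_H$ argument that opens the proof of Proposition~\ref{prop:main5}: if $F$ were strictly semistable then convexity of the bounding curve on $(0,\tfrac12]$ and on $[\tfrac12,1)$ forces some Jordan--H\"older factor to violate the inequality, while Lemma~\ref{lem:dis} gives that factor strictly smaller $\overline{\Delta}_H$, contradicting minimality (a torsion factor on the vertical wall is dealt with exactly as there). With $F$ now tilt stable, I set up the hypotheses of the restriction lemma with $m=2$ and $Y=S'$. Consider the line through $p_H(F)$ and $p_H(F(-2H)[1])$ in the $\bigl(\frac{H\ch_2}{H^3\rk},\frac{H^2\ch_1}{H^3\rk}\bigr)$-plane; since the shift $[1]$ preserves $p_H$, this is the threefold analogue of the line used in Proposition~\ref{prop:main5}. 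As the two vertical ($\ch_1$-)coordinates are $\frac{H^2\ch_1(F)}{H^3\rk(F)}>0$ and $\frac{H^2\ch_1(F)}{H^3\rk(F)}-2<0$, the object $F(-2H)[1]$ lies in $\Coh^{0,H}(X)$, and one checks (the analogue of the conditions $a>b^2-b$, $a>b^2-\tfrac12$ verified in Proposition~\ref{prop:main5}) that the line meets the axis $\{\beta=0\}$ at a point $(\alpha_0,0)$ with $\alpha_0>0$. By Lemma~\ref{lem:wall}, $F$ and $F(-2H)[1]$ are then simultaneously $\nu_{\alpha_0,0,H}$-tilt stable of equal slope, so hypotheses (1)--(3) of the restriction lemma hold.

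Applying the restriction lemma to a generic $S'\in|2H|$ gives that $F|_{S'}$ is $\mu_H$-slope semistable on $S'$, with
$$\rk(F|_{S'})=\rk(F),\qquad H_{S'}\ch_1(F|_{S'})=2H^2\ch_1(F),\qquad \ch_2(F|_{S'})=2H\ch_2(F).$$
Since $H_{S'}^2=2H^3$, the two coordinates governing Proposition~\ref{prop:main5} for $F|_{S'}$ become exactly the threefold quantities,
$$\frac{H_{S'}\ch_1(F|_{S'})}{H_{S'}^2\,\rk(F|_{S'})}=\frac{H^2\ch_1(F)}{H^3\rk(F)},\qquad \frac{\ch_2(F|_{S'})}{H_{S'}^2\,\rk(F|_{S'})}=\frac{H\ch_2(F)}{H^3\rk(F)}.$$
A $\mu_H$-slope semistable sheaf on $S'$ satisfies the bound of Proposition~\ref{prop:main5} (pass to slope-stable Jordan--H\"older factors, each of positive rank and equal slope, and average; the corollary for slope-stable sheaves covers the factors), so applying that bound to $F|_{S'}$ and rewriting through the displayed identities yields precisely the claimed inequality on $X$. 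The range $\frac{H^2\ch_1(F)}{H^3\rk(F)}\in(\tfrac12,1)$ and the $\nu_{\alpha',1,H}$-case are reduced to the basic range by the duality $F\mapsto F^*(H)$, just as in Proposition~\ref{prop:main5}.

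The step I expect to be the main obstacle is verifying the hypotheses of the restriction lemma: that the minimal-$\overline{\Delta}_H$ reduction indeed produces a genuine coherent sheaf that is tilt stable, and that the line through $p_H(F)$ and $p_H(F(-2H)[1])$ falls in the region where it crosses $\{\beta=0\}$ at some $\alpha_0>0$ (so that conditions (1)--(3) are met with $m=2$). Once this symmetric twist configuration is secured, the Chern-character bookkeeping and the slope translation above are routine.
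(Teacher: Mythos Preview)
Your proposal is correct and is precisely the argument the paper has in mind: the corollary is stated without proof because it follows by repeating the proof of Proposition~\ref{prop:main5} one step up, using the restriction lemma with $m=2$ to pass from $X_{2,4}$ to a generic $S'=X_{2,2,4}\in|2H|$ and then applying Proposition~\ref{prop:main5} (via its slope-stable corollary) to $F|_{S'}$. The minimal-$\overline{\Delta}_H$ reduction, the line through $p_H(F)$ and $p_H(F(-2H)[1])$, and your identification $\frac{H_{S'}\ch_1(F|_{S'})}{H_{S'}^2\rk(F|_{S'})}=\frac{H^2\ch_1(F)}{H^3\rk(F)}$, $\frac{\ch_2(F|_{S'})}{H_{S'}^2\rk(F|_{S'})}=\frac{H\ch_2(F)}{H^3\rk(F)}$ are exactly the intended ingredients.
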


We get the following bound, which is a little weaker but easier for calculation.

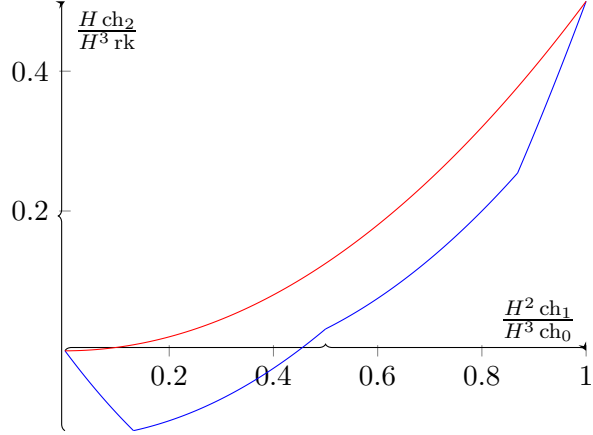
\begin{figure}
\centering
\begin{tikzpicture}
[
    dot/.style={
        draw=black,
        fill=blue!90,
        circle,
        minimum size=3pt,
        inner sep=0pt,
        solid,
    },
]
\begin{axis}[
    axis lines = center,
    xlabel = \(\frac{H^2\ch_1}{H^3\ch_0}\),
    ylabel = {\(\frac{H\ch_2}{H^3\rk}\)},
]

\addplot [
    domain=0:4/3-sqrt{13}/3, 
    samples=100, 
    color=blue,
    name path=gamma
    ]
    {x^2  - x};
\addplot [
    domain=4/3-sqrt{13}/3:0.5, 
    samples=100, 
    color=blue,
    name path=gammap1
    ]
    {5/8*x^2  - 1/8};
\addplot [
    domain=0.5:sqrt{13}/3-1/3, 
    samples=100, 
    color=blue,
    ]
    {5/8*x^2 -0.25*x};
\addplot [
    domain=sqrt{13}/3-1/3:1, 
    samples=100, 
    color=blue,
    name path=gammap2
    ]
    {x^2-0.5};

\addplot [
    domain=0:1, 
    samples=100, 
    color=red
    ]
    {0.5*x^2};

\end{axis}
\end{tikzpicture}
\caption{The Bogomolov--Gieseker type inequality (blue) and the classical Bogomolov inequality (red)} \label{graph:BG}
\end{figure}

\begin{theorem}\label{thm:main5}
On $X_{2,4}$, let $F$ be a slope semistable sheaf in $\mathrm{Coh}(X_{2,4})$ (or $\nu_{\alpha,0,H}$-tilt semistable object for $\alpha>0$, or Brill--Noether semistable). Suppose $\frac{H^2\ch_1(F)}{H^3\rk(F)}\in[-1,1]$. Then
\begin{equation}
  \frac{H\ch_2(F)}{H^3\rk(F)}\leq
  \begin{cases}
    -\frac{1}{2}\abs{\frac{H^2\ch_1(F)}{H^3\rk(F)}} & \text{if } \abs{\frac{H^2\ch_1(F)}{H^3\rk(F)}}\in[0,\frac{1}{5}]\\
    \frac{7}{16}\abs{\frac{H^2\ch_1(F)}{H^3\rk(F)}}-\frac{3}{16} & \text{if }\abs{\frac{H^2\ch_1(F)}{H^3\rk(F)}}\in[\frac{1}{5},\frac{1}{2}]\\
    \frac{9}{16}\abs{\frac{H^2\ch_1(F)}{H^3\rk(F)}}-\frac{1}{4} & \text{if }\abs{\frac{H^2\ch_1(F)}{H^3\rk(F)}}\in[\frac{1}{2},\frac{4}{5}]\\
    \frac{51}{44}\abs{\frac{H^2\ch_1(F)}{H^3\rk(F)}}-\frac{8}{11} & \text{if } \abs{\frac{H^2\ch_1(F)}{H^3\rk(F)}}\in[\frac{4}{5},\frac{10}{11}]\\
    \frac{21}{11}\abs{\frac{H^2\ch_1(F)}{H^3\rk(F)}}-\frac{31}{22} & \text{if }\abs{\frac{H^2\ch_1(F)}{H^3\rk(F)}}\in[\frac{10}{11},1]
  \end{cases}
\end{equation}
The equality can only hold when $\abs{\frac{H^2\ch_1(F)}{H^3\rk(F)}}\in\{0,\frac{1}{5},\frac{1}{2},\frac{4}{5},\frac{1}{4},\frac{10}{11},1\}$. Moreover, when $\abs{\frac{H^2\ch_1(F)}{H^3\rk(F)}}\in[0,\frac{1}{5}]$, $\frac{H\ch_2(F)}{H^3\rk(F)}\leq\left(\frac{H^2\ch_1(F)}{H^3\rk(F)}\right)^2-\abs{\frac{H^2\ch_1(F)}{H^3\rk(F)}}$ holds; when $\abs{\frac{H^2\ch_1(F)}{H^3\rk(F)}}\in[\frac{1}{5},\frac{1}{4}]$, $\frac{H\ch_2(F)}{H^3\rk(F)}\leq\frac{9}{32}\abs{\frac{H^2\ch_1(F)}{H^3\rk(F)}}-\frac{5}{32}$ holds; when $\abs{\frac{H^2\ch_1(F)}{H^3\rk(F)}}\in[\frac{1}{5},\frac{1}{2}]$, $\frac{H\ch_2(F)}{H^3\rk(F)}\leq\frac{5}{8}\left(\frac{H^2\ch_1(F)}{H^3\rk(F)}\right)^2-\frac{1}{8}$ holds; and when $\abs{\frac{H^2\ch_1(F)}{H^3\rk(F)}}\in[\frac{\sqrt{13}-1}{3},1]$, $\frac{H\ch_2(F)}{H^3\rk(F)}\leq\left(\frac{H^2\ch_1(F)}{H^3\rk(F)}\right)^2-\frac{1}{2}$ holds.
\end{theorem}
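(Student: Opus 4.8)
The plan is to deduce Theorem~\ref{thm:main5} directly from the Corollary to Proposition~\ref{prop:main5} (the quadratic bound transported from $S'=X_{2,2,4}$ to $X_{2,4}$), by replacing each piecewise-quadratic bound with a dominating piecewise-linear one. Writing $x:=\frac{H^2\ch_1(F)}{H^3\rk(F)}$ and $y:=\frac{H\ch_2(F)}{H^3\rk(F)}$, that Corollary gives, for $x\in(0,1)$,
$$y\le\phi(x):=\begin{cases} x^2-x & x\in(0,\tfrac43-\tfrac{\sqrt{13}}{3}]\\ \tfrac58x^2-\tfrac18 & x\in(\tfrac43-\tfrac{\sqrt{13}}{3},\tfrac12]\\ \tfrac58x^2-\tfrac14x & x\in(\tfrac12,\tfrac{\sqrt{13}-1}{3})\\ x^2-\tfrac12 & x\in[\tfrac{\sqrt{13}-1}{3},1).\end{cases}$$
Since every linear expression in the theorem is asserted to bound $y$ from above, it suffices to check that each of those lines lies on or above the graph of $\phi$ on its stated interval; the theorem's inequality is then immediate from $y\le\phi$.

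First I would reduce to $x\ge 0$ so as to account for the absolute values. Dualizing (the reflexive dual $F^\vee$ for a slope semistable sheaf, and the derived dual $\mathbb{D}(\,\cdot\,)=(\,\cdot\,)^*[1]$ used already in Section~3 for the tilt and Brill--Noether classes) preserves the relevant semistability class and sends $x\mapsto-x$ while leaving $y$ unchanged, any lower-dimensional torsion correction only strengthening the inequality. Hence the case $x<0$ follows from the case $|x|$. I would also record the standard facts that a slope semistable sheaf is $\nu_{\alpha,0,H}$-tilt semistable for $\alpha\gg 0$ and that a Brill--Noether semistable object is $\nu_{\alpha,0,H}$-semistable for small $\alpha>0$, so that all three admissible classes fall under the hypotheses of Proposition~\ref{prop:main5}/its Corollary.

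The heart of the argument is an elementary verification, interval by interval, that the stated line dominates $\phi$; in each case the inequality reduces to a convex quadratic being $\le0$ whose two roots are exactly the endpoints, so it holds throughout. Concretely, on $[\tfrac15,\tfrac12]$ the inequality $\tfrac58x^2-\tfrac18\le\tfrac{7}{16}x-\tfrac{3}{16}$ is equivalent to $10x^2-7x+1=10(x-\tfrac15)(x-\tfrac12)\le0$; on $[\tfrac12,\tfrac45]$ it reduces to $10x^2-13x+4=10(x-\tfrac12)(x-\tfrac45)\le0$; on $[\tfrac{10}{11},1]$ to $11x^2-21x+10=11(x-\tfrac{10}{11})(x-1)\le0$; and on $[0,\tfrac15]$ both pieces $x^2-x$ and $\tfrac58x^2-\tfrac18$ satisfy the bound $\le-\tfrac12x$. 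The only interval needing a remark is $[\tfrac45,\tfrac{10}{11}]$, which straddles the junction $x=\tfrac{\sqrt{13}-1}{3}$ of the two parabola pieces: there the slope jumps upward from $\tfrac54x-\tfrac14$ to $2x$, so $\phi$ is convex across the junction, and the line through the two endpoints $(\tfrac45,\tfrac15)$ and $(\tfrac{10}{11},\tfrac{79}{242})$, both of which lie on $\phi$, is a chord of a convex function and hence lies above $\phi$ on the whole interval.

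Finally, the ``moreover'' clauses are just the sharper bounds read off $\phi$ directly (the bounds $x^2-|x|$ on $[0,\tfrac15]$, $\tfrac58x^2-\tfrac18$ on $[\tfrac15,\tfrac12]$, and $x^2-\tfrac12$ on $[\tfrac{\sqrt{13}-1}{3},1]$ are literally $\phi$), together with one extra chord $\tfrac{9}{32}x-\tfrac{5}{32}$ on $[\tfrac15,\tfrac14]$, checked via $20x^2-9x+1=20(x-\tfrac15)(x-\tfrac14)\le0$. The equality statement then follows because each chord meets $\phi$ only at the interval endpoints, forcing $|x|\in\{0,\tfrac15,\tfrac14,\tfrac12,\tfrac45,\tfrac{10}{11},1\}$. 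I expect the only genuinely delicate points to be the convexity bookkeeping on $[\tfrac45,\tfrac{10}{11}]$ and the clean treatment of the endpoints $x=0,\pm1$, where the open interval of the Corollary is closed up using the classical Bogomolov inequality $\overline{\Delta}_H(F)\ge0$ at $x=0$ and a twist $F\mapsto F(-H)$ at $x=\pm1$; everything else is routine algebra.
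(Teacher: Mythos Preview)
Your approach is correct and is exactly what the paper intends: the sentence preceding Theorem~\ref{thm:main5} (``We get the following bound, which is a little weaker but easier for calculation'') signals that the piecewise-linear inequalities are obtained simply by dominating the convex quadratic bound of Proposition~\ref{prop:main5}/its Corollary, and your interval-by-interval chord verification (including the convexity argument across the junction on $[\tfrac45,\tfrac{10}{11}]$), together with the duality/twist reductions for $x\le 0$ and the endpoints, carries this out in full.
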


\begin{remark}
As in \cite{li2019stability}, the method in this paper is expected to apply to other cases. A possible scheme is the following: One can start with a projective Calabi--Yau threefold $X$ with ample divisor $H$, and consider a generic member $Y\in|2H|$ (or higher multiple of $H$), and a generic curve $C\in|2H_Y|$ (and still possible for a higher multiple of $H_Y$). The essence is to have a good Bogomolov--Gieseker inequality for $Y$. To do this, one can embed $C$ inside a K3 surface of Picard rank 1 (the method here) or a del Pezzo surface (as in \cite{li2019stability, koseki2020stability}), or more generally any surface for which a good $\Gamma$-curve is known. To get a good $\Gamma$-curve for a surface, one can repeatedly use the method in this paper and in \cite{li2019stability}. In particular, we expect that one can prove the Bogomolov--Gieseker inequality for $X_{3,3}\subset\mathbb{P}^5$, the complete interesection of two generic cubics, by first restricting to the surface $S_{2,3,3}$, and then using the link provided by the curve $C_{2,2,3,3}$ to the Bogomolov--Gieseker inequality of $S_{2,2,3}$. Applying the method with the curve $S_{2,2,3}\supset C_{2,2,2,3}\subset S_{2,2,2}$, we get the Bogomolov--Gieseker inequality for $S_{2,2,3}$. Like the last remark in \cite{li2019stability}, each deformation type needs a lot of calculation. 
\end{remark}

\paragraph{Conflicts of Interest}
None.

\paragraph{Acknowledgments}
The author would like to thank Dr. Chunyi Li for proposing this problem, helpful discussions and proofreading, Dr. Naoki Koseki for suggestive advice, and Mr. Jes\'us David Fern\'andez Caballero, Ms. Yefei Ma and Mr. Pedro N\'u\~nez for proofreading, and the referees for helpful comments and references.

\paragraph{Funding statement}
The author was supported by grants from the CSC-Warwick scholarship and Royal Society URF$\backslash$R1$\backslash$201129 ``Stability condition and application in algebraic geometry”.

\bibliographystyle{alpha}

\bibliography{main.bib}
\Addresses
\end{document}